\newcommand{\R}{\mathbb R}
\newcommand{\C}{\mathbb C}
\newcommand{\N}{\mathbb N}
\newcommand{\Z}{\mathbb Z}
\newcommand{\T}{\mathbb T}
\newcommand{\tres}{|\!|\!|}
\newcommand{\lan}{\langle}
\newcommand{\ran}{\rangle}
\newtheorem{theorem}{Theorem}[section]
\newtheorem{proposition}[theorem]{Proposition}
\newtheorem{remark}[theorem]{Remark}
\newtheorem{lemma}[theorem]{Lemma}
\newtheorem{corollary}[theorem]{Corollary}
\newtheorem{definition}[theorem]{Definition}
\begin{document}

\title[Control and Stabilization of the Benjamin-Ono Equation]{Control and Stabilization of the Benjamin-Ono Equation on a Periodic Domain}
\author{Felipe Linares}
\address{Instituto de Matematica Pura e Aplicada,
Estrada Dona Castorina 110, Rio de Janeiro 22460-320, Brazil}
\email{linares@impa.br}

\author{Lionel Rosier}
\address{Institut Elie Cartan, UMR 7502 UHP/CNRS/INRIA,
B.P. 70239, 54506 Van\-d\oe uvre-l\`es-Nancy Cedex, France}
\email{rosier@iecn.u-nancy.fr}

\keywords{Benjamin-Ono equation; Periodic domain; Unique continuation property; Propagation of regularity;
Exact controllability; Stabilization}

\subjclass{93B05, 93D15, 35Q53}

\begin{abstract} It was proved by Linares and Ortega in \cite{LO} that the {\em linearized} Benjamin-Ono equation
posed on a periodic domain $\T$ with a distributed control supported on an arbitrary subdomain is exactly controllable and exponentially stabilizable.
The aim of this paper is to extend those results to the {\em full} Benjamin-Ono equation.
A feedback law in the form of a localized damping is incorporated in the equation. A smoothing effect established with the aid of a  propagation of regularity
property is used to prove the semi-global stabilization in $L^2(\T )$ of weak solutions obtained by the method of vanishing viscosity. The local
well-posedness and the local exponential stability in $H^s(\T )$
are also established for  $s>1/2$ by using the contraction mapping theorem. Finally, the local exact controllability is derived in $H^s(\T )$
for $s>1/2$ by combining the above feedback law with some open loop control.
\end{abstract}

\maketitle
\section{Introduction}
The Benjamin-Ono (BO) equation can we written as
\[
u_t + {\mathcal H} u_{xx} + uu_x=0,
\]
where $u=u(x,t)$ denotes a real-valued function of the variables $x\in\R$ and
$t\in \R$, and ${\mathcal H}$ denotes the Hilbert transform defined as
\[
\widehat {{\mathcal H } u} (\xi )= -i \, \text{sgn} (\xi )\, \hat u(\xi ).
\]
This integro-differential equation models the propagation of internal waves in stratified fluids of great depth (see
\cite{benjamin,ono}) and turns out to be important in other physical situations as well (see \cite{DR,ishimori,MK}).
Among noticeable properties of this equation we can mention that: (i) it defines a Hamiltonian system; (ii) it admits infinitely
many conserved quantities (see \cite{case}); (iii) it  can be solved by an analogue of the inverse scattering method (see \cite{AF}); (iv) it
admits  (multi)soliton solutions (see \cite{case}).

In this paper, we consider the BO equation posed on the periodic domain $\T =\R /(2\pi\Z)$:
\begin{equation}
\label{BOT}
u_t + {\mathcal H} u_{xx} + uu_x=0,\quad x\in\T,\ t\in\R,
\end{equation}
where the Hilbert transform $\mathcal H$ is defined now by
\[
(\widehat{{\mathcal H}u})_k=-i\, \text{sgn}(k)\hat u_k.
\]
The two first conserved quantities are
\[
I_1(t)=\int_{\T} u(x,t)dx
\]
and
\[
I_2(t)= \int_{\T} u^2(x,t)dx.
\]
From the historical origins \cite{benjamin,ono} of the BO equation, involving the behavior of stratified fluids, it is
natural to think $I_1$ and $I_2$ as expressing conservation of volume (or mass) and energy, respectively.

The Cauchy problem for the equation \eqref{BOT} in the real line has been intensively studied for many years
(\cite{Sa, Io1,ABFS,Po, MoSaTz, KoTz1, KeKo, tao, BP, IoKe, MP, FoPo, GFFLGP}).
In the periodic case, there have been several recent developments. (See for instance \cite{molinet, MR, MP} and the references therein.)
The best known result so far \cite{molinet,MP} is that the Cauchy problem is well-posed in the space
\[
H^s_0(\T ) =\{u\in H^s(\T ); \  \hat u_0:= \frac{1}{2\pi}\int_{\T}u(x)\, dx=0\}
\]
for $s\ge 0$. Moreover, the corresponding solution map ($u_0\to u$) is real analytic from the space $H^0_0(\T )$ to the space $C([0,T],H^0_0(\T ))$.

In this paper we will study the equation \eqref{BOT} from a control point of view with a forcing term $f=f(x,t)$ added to the equation as a
control input:
\begin{equation}
\label{BOc}
u_t + {\mathcal H} u_{xx} + uu_x=f(x,t),\quad x\in\T,\ t\in\R,
\end{equation}
where $f$ is assumed to be supported in a given open set $\omega \subset \T$. The following exact control problem and stabilization problem
are fundamental in control theory.

{\bf Exact Control Problem:} Given an initial state $u_0$ and a terminal state $u_1$ in a certain space, can one find an appropriate
control input $f$ so that the equation \eqref{BOc} admits a solution $u$ which satisfies $u(\cdot,0)=u_0$ and $u(\cdot,T)=u_1$?

{\bf Stabilization Problem:} Can one find a feedback law $f=Ku$ so that the resulting closed-loop system
\[
u_t + {\mathcal H} u_{xx} + uu_x=Ku,\quad x\in\T,\ t\in\R ^+
\]
is asymptotically stable as $t\to +\infty$?

Those questions were first investigated by Russell and Zhang in \cite{RZ} for the Korteweg-de Vries equation, which serves
as a model for propagation of surface waves along a channel:
\begin{equation}
u_t + u_{xxx} + uu_x=f,\quad x\in\T, \ t\in\R .
\label{BOf}
\end{equation}
In their work, in order to keep
the {\em mass} $I_1(t)$ conserved, the control input  is chosen to be of the form
\[
f(x,t)=(Gh)(x,t):=a(x)\left( h(x,t)-\int_{\T} a(y)h(y,t)\, dy \right)
\]
where $h$ is considered as a new control input, and $a(x)$ is a given nonnegative smooth function such that
$\{x\in \T; \  a(x) >0\}=\omega$ and
\[
2\pi [a]=\int_{\T} a(x)\, dx=1.
\]
For the chosen $a$, it is easy to see that
\[
\frac{d}{dt}\int_{\T} u(x,t)\, dx=\int_{\T} f(x,t)dx=0\quad \forall t\in\R
\]
for any solution $u=u(x,t)$ of the system
\begin{equation}
\label{BOG}
u_t + u_{xxx} + uu_x=Gh,\quad x\in\T,\ t\in\R .
\end{equation}
Thus the {\em mass} of the system is indeed conserved.

The control of dispersive nonlinear waves equations on a periodic domain has been extensively studied
in the last decade: see e.g. \cite{RZ,RZ2009bis,LRZ} for the Korteweg-de Vries equation, \cite{MORZ} for the Boussinesq system,
\cite{RZ2012} for the BBM equation, and
\cite{DGL,RZ2007b,laurent,RZ2009,laurent2} for the nonlinear Schr\"odinger equation.
By contrast, the control theory of the BO equation is at its early stage. The following results are due to Linares and Ortega \cite{LO}.\\[5mm]
{\bf Theorem A.}
\cite{LO}
{\em Let $s\ge 0$ and $T>0$ be given. Then for any $u_0,u_1\in H^s(\T )$ with $[u_0]=[u_1]$ one can find a control input $h\in L^2(0,T,H^s(\T))$ such that
the solution of the system
\begin{equation}
\label{BOlin}
u_t + {\mathcal H} u_{xx} =Gh, \qquad u(x,0)=u_0(x)
\end{equation}
satisfies $u(x,T)=u_1(x)$.}

In order to stabilize \eqref{BOlin}, Linares and Ortega  employed a simple control law
\[
h(x,t)=-G^* u(x,t).
\]
The resulting closed-loop system reads
\[
u_t+{\mathcal H} u_{xx}=-GG^*u.
\]
{\bf Theorem B.} \cite{LO}
{\em Let $s\ge 0$ be given. Then there exist some constants $C>0$ and $\lambda >0$ such that for any $u_0\in H^s(\T )$, the solution of
\[
u_t+{\mathcal H} u_{xx}=-GG^*u,\qquad u(x,0)=u_0(x)
\]
satisfies
\[
\|u(\cdot,t) - [u_0]\|  _{H^s(\T ) }  \le Ce^{-\lambda t} \|u_0-[u_0]\| _{H^s(\T ) } \qquad \forall t\ge 0.
\]}

The extension of those results to the full BO equation \eqref{BOG} turns out to be a very hard task. Indeed, it is by now well known that
the contraction principle cannot be used to establish the local well-posedness of BO in $H^s_0(\T )$ for $s\ge 0$.
The method of proof in \cite{molinet,MP} used strongly Tao's gauge transform, and it is not clear whether this approach
can be followed when an additional  control term is present in the equation.

For the sake of simplicity, we shall assume from now on that $[u_0]=0$, so that $u(t)$ has a zero mean value for all times.

To stabilize the BO equation, we consider the following feedback law
\[
f=-G(D(G u))
\]
where $\widehat{D u}_k =|k| \hat u_k$. Scaling in \eqref{BOf} by $u$ gives (at least formally)
\begin{equation}
\label{id}
\frac{1}{2} \|u(T)\|_{L^2(\T )}^2  + \int_0^T \|D^{\frac{1}{2}}(Gu)\|_{L^2(\T)}^2 dt
=\frac{1}{2} \|u_0\|_{L^2(\T )}^2.
\end{equation}
This suggests that the energy is dissipated over time. On the other hand,
\eqref{id} reveals a smoothing effect, at least in the region $\{ a>0\}$.
Using a {\em propagation of regularity property} in the same vein as  in \cite{DGL,laurent,laurent2,LRZ},
we shall prove that the smoothing effect holds everywhere, i.e.
\begin{equation}
\label{sm}
\|u\|_{L^2(0,T;H^{\frac{1}{2}}(\T ))}\le C(T,\|u_0\|).
\end{equation}
Using this smoothing effect and the classical compactness/uniqueness argument,
we shall first prove that the corresponding closed-loop equation is
semi-globally  exponentially stable.
\begin{theorem}
\label{main1}
Let $R>0$ be given. Then there exist some constants
$C=C(R)$ and $\lambda =\lambda (R)$ such that  for any $u_0\in H^0_0(\T )$ with $\|u_0\|\le R$,
the weak solutions in the sense of vanishing viscosity of
\begin{equation}
\label{BOfinal}
u_t + {\mathcal H} u_{xx} + uu_x =-GDGu,\qquad u(x,0)=u_0(x)
\end{equation}
satisfy
\[
\|u(t)\|\le Ce^{-\lambda t}\|u_0\| \qquad \forall t\ge 0.
\]
\end{theorem}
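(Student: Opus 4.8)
The plan is to reduce the exponential decay to an observability inequality over a single time window and then iterate. Fix $T>0$; the aim is to establish a constant $C=C(R,T)>0$ such that every vanishing-viscosity solution of \eqref{BOfinal} with $\|u_0\|\le R$ satisfies
\[
\|u_0\|^2\le C\int_0^T\|D^{\frac12}(Gu)\|^2\,dt .
\]
Granting this, the dissipation identity \eqref{id} — which for such solutions holds at least in the form $\|u(T)\|^2\le\|u_0\|^2-2\int_0^T\|D^{\frac12}(Gu)\|^2\,dt$ — yields $\|u(T)\|^2\le(1-\tfrac2C)\|u_0\|^2$, a uniform contraction over any window of length $T$. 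Since \eqref{id} also shows that $t\mapsto\|u(t)\|$ is nonincreasing, one may apply this on each window $[nT,(n+1)T]$ (where $\|u(nT)\|\le R$, so the same constant $C(R,T)$ is available) to get $\|u(nT)\|\le(1-\tfrac2C)^{n/2}\|u_0\|$; monotonicity of the energy then fills in the intermediate times, giving exactly the estimate of Theorem~\ref{main1}.

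I would first obtain the dissipation identity along the parabolic regularization. For $\nu>0$ the equation $u^\nu_t+{\mathcal H}u^\nu_{xx}+u^\nu u^\nu_x=-GDGu^\nu+\nu u^\nu_{xx}$ is globally well posed in $H^0_0(\T)$, and testing it against $u^\nu$ gives $\tfrac{d}{dt}\tfrac12\|u^\nu\|^2=-\|D^{\frac12}(Gu^\nu)\|^2-\nu\|u^\nu_x\|^2\le0$, using that ${\mathcal H}\p_x^2$ is skew-adjoint, that $\int_\T(u^\nu)^2u^\nu_x\,dx=0$, and that $\lan GDGu^\nu,u^\nu\ran=\|D^{\frac12}(Gu^\nu)\|^2$. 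In particular $\|u^\nu(t)\|\le\|u_0\|$ uniformly in $\nu$; combined with the propagation-of-regularity bound \eqref{sm} (which I expect to be uniform in $\nu$) and an Aubin--Lions compactness argument, letting $\nu\to0^+$ produces a weak solution $u$ with $\|u(t)\|\le\|u_0\|$ for all $t\ge0$ and $\tfrac12\|u(T)\|^2+\int_0^T\|D^{\frac12}(Gu)\|^2\,dt\le\tfrac12\|u_0\|^2$.

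The observability inequality I would prove by contradiction, using \eqref{sm} and a compactness--uniqueness argument. If it fails there are vanishing-viscosity solutions $u^n$ with $\alpha_n:=\|u_0^n\|\le R$ and $\int_0^T\|D^{\frac12}(Gu^n)\|^2\,dt\le\alpha_n^2/n\to0$. By \eqref{sm} the $u^n$ are bounded in $L^2(0,T;H^{1/2}(\T))\cap L^\infty(0,T;L^2(\T))$, and the equation bounds $\p_tu^n$ in $L^1(0,T;H^{-2}(\T))$, so along a subsequence $u^n\to u$ strongly in $L^2(0,T;L^2(\T))$ and weakly in $L^2(0,T;H^{1/2}(\T))$; moreover $Gu^n\to0$ in $L^2(0,T;H^{1/2}(\T))$ (as $Gu^n$ has zero mean, $\|Gu^n\|_{L^2}\le\|D^{\frac12}(Gu^n)\|_{L^2}$), so the feedback term $GDGu^n\to0$ in $L^2(0,T;H^{-1/2}(\T))$. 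I would then split into two cases. If $\alpha_n\to\alpha>0$, the limit $u$ solves the full equation \eqref{BOT} on $\T\times(0,T)$ with $Gu=0$, i.e. $u$ does not depend on $x$ on the open cylinder $\omega\times(0,T)$; by a unique continuation property for the BO equation, $u$ is then spatially constant, hence, being of zero mean, identically zero — contradicting $0=\int_0^T\|u\|^2\,dt=\lim_n\int_0^T\|u^n\|^2\,dt=T\alpha^2>0$. If $\alpha_n\to0$, I would renormalize $v^n:=u^n/\alpha_n$, so that $\|v^n(0)\|=1$ and $v^n_t+{\mathcal H}v^n_{xx}+\alpha_nv^nv^n_x=-GDGv^n$; revisiting the propagation-of-regularity estimate, its constant should stay bounded as $\alpha_n\to0$ since the nonlinearity $\alpha_nv^nv^n_x$ is uniformly small, so that $v^n$ is again bounded in $L^2(0,T;H^{1/2}(\T))$ and converges strongly in $L^2(0,T;L^2(\T))$ to a limit $v$ with $\int_0^T\|v\|^2\,dt=T$ solving the \emph{linearized} equation $v_t+{\mathcal H}v_{xx}=0$ with $Gv=0$; but $Gv=0$ forces $v\equiv0$ by the unique continuation property for the linearized equation, which is a consequence of the observability inequality underlying Theorem~A of \cite{LO} — a contradiction.

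I expect the unique continuation property for the BO equation to be the main obstacle: a solution which does not depend on $x$ on an open subcylinder $\omega\times(0,T)$ must be spatially constant. The difficulty is genuinely new, since the nonlocal term ${\mathcal H}u_{xx}$ prevents the equation from being localized to $\omega$. My approach would be to invoke the propagation of regularity first to make $u$ smooth in $x$, and then to differentiate the equation: with $w:=u_x$ one has $w=w_x=0$ on $\omega\times(0,T)$ and $w_t+{\mathcal H}w_{xx}+uw_x+u_xw=0$, which reduces matters to a unique continuation property for a first-order perturbation of the linear operator $\p_t+{\mathcal H}\p_x^2$; this I would attack by a Carleman-type estimate combined with a propagation-of-regularity argument exploiting the explicit spectral structure of the linearized operator on $\T$ (the same structure underlying the linear result of \cite{LO}). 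A secondary technical point is the uniformity, as $\alpha_n\to0$, of the renormalized smoothing estimate used in the second case above.
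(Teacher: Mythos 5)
Your overall architecture --- energy dissipation plus an observability inequality proved by compactness--uniqueness, with a unique continuation property as the key ingredient --- is exactly the paper's. But two genuine gaps remain.

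First, the unique continuation property, which you yourself flag as ``the main obstacle,'' is the real mathematical content here and your proposal does not supply it. The Carleman-estimate route you sketch for the nonlocal operator $\p_t+{\mathcal H}\p_x^2$ is speculative and unnecessary: the paper's Proposition~\ref{unique_continuation} handles it by an elementary Fourier/Hardy-space argument. From $u=c(t)$ on $(a,b)\times(0,T)$ one gets $u_{xxx}=0$ and ${\mathcal H}u_{xxx}=0$ on $(a,b)$, so the one-sided Fourier series $\sum_{k>0}\hat v_k e^{ikx}$ (with $v=u_{xxx}$) vanishes on an interval; Lemma~\ref{Fourier}, proved via boundary values of functions in the Hardy space ${\mathbf H}^2(U)$, forces it to vanish identically, whence $v\equiv 0$ and then $u\equiv 0$. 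Without this (or an equivalent substitute) your proof is incomplete at its central step.

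Second, you run the compactness argument directly on the vanishing-viscosity weak solutions, and this breaks in two places. (i) Your contradiction in the case $\alpha_n\to\alpha>0$ uses $\lim_n\int_0^T\|u^n\|^2\,dt=T\alpha^2$, which requires the energy \emph{identity} (so that the vanishing of $\int_0^T\|D^{1/2}(Gu^n)\|^2\,dt$ forces $\|u^n(t)\|^2\to\alpha_n^2$); but for weak limits you only have the inequality $\|u(t)\|^2\le\|u_0\|^2-2\int_0^t\|D^{1/2}(Gu)\|^2\,d\tau$, which gives no lower bound on $\|u^n(t)\|$. (ii) The iteration over windows $[nT,(n+1)T]$ presupposes that the restriction of a vanishing-viscosity solution to such a window is again a vanishing-viscosity solution issued from $u(nT)$; since uniqueness of these weak solutions is open, no semigroup property is available. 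The paper sidesteps both problems by proving the observability inequality \eqref{obs} and the exponential decay (Theorem~\ref{thmstab1}) for the regularized equation \eqref{BOe}, where the identity \eqref{identitybis} holds exactly and solutions are unique, with constants uniform in $\varepsilon\in(0,1]$ (this uniformity is why Proposition~\ref{smoothing} carries the parameters $\varepsilon$ and $\alpha$ explicitly, and why Proposition~\ref{unique_continuation} is stated for all $\varepsilon\ge 0$ and $\alpha\in\R$, covering your two cases at once); the decay estimate is then passed to the weak limit. You should restructure your argument accordingly.
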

A weak solution of \eqref{BOfinal} in the sense of vanishing viscosity is a  distributional solution of \eqref{BOfinal}
$u\in C_w(\R ^+ , H^0_0(\T )) \cap L^2_{loc}  ( \R ^+, H_0^{\frac{1}{2}} (\T ))$ that may be
obtained as a weak limit in a certain space of solutions of the BO equation with viscosity
\begin{equation}
\label{BOviscosity}
u_t + ({\mathcal H} -\varepsilon )u_{xx} + uu_x =-GDGu,\qquad u(x,0)=u_0(x)
\end{equation}
as $\varepsilon\to 0^+$ (see below Definition \ref{defiweak} for a precise definition).
The issue of the {\em uniqueness} of the weak solutions in the sense of vanishing viscosity seems challenging.

Using again the smoothing effect \eqref{sm}, one can extend (at least locally) the exponential stability from $H^0_0(\T )$ to $H^s_0(\T )$ for $s>1/2$.
\begin{theorem}
\label{main2}
Let $s\in (\frac{1}{2}, 2]$. Then there exists $\rho >0$ such that for any $u_0\in H^s_0(\T )$  with $\|u_0\|_{H^s(\T )}< \rho$, there exists for all $T>0$ a unique
solution $u(t)$ of \eqref{BOfinal} in the class $C([0,T],H^s_0(\T ))\cap L^2(0,T, H^{s+\frac{1}{2}}_0(\T ))$. Furthermore, there exist some constants $C>0$ and
$\lambda >0$
such that
\[
\|u(t)\| _s \le Ce^{-\lambda t} \|u_0\|_s\qquad \forall t\ge 0.
\]
\end{theorem}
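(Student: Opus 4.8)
The plan is to combine a fixed-point argument for the well-posedness with an energy estimate refined by the smoothing effect \eqref{sm} to get the decay. First I would set up the integral (Duhamel) formulation associated with the linear group $W(t)=e^{-t\mathcal H\partial_x^2}$ and the damping operator: writing the equation as $u_t+\mathcal H u_{xx}+GDGu=-uu_x$, one treats $-GDGu$ as part of the linear part (it is bounded on every $H^s$ since $G$ smooths and $D$ is order one, so $GDG$ is bounded on $H^s(\T)$), so the relevant semigroup $S(t)$ generated by $-\mathcal H\partial_x^2-GDG$ is well defined on $H^s_0(\T)$ and, by Theorem B applied with the analogous computation (or directly by the dissipation identity \eqref{id}), satisfies a smoothing bound $\|S(t)u_0\|_{L^2(0,T;H^{s+1/2})}\lesssim\|u_0\|_s$ together with $\|S(\cdot)u_0\|_{C([0,T],H^s)}\lesssim\|u_0\|_s$. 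For $s>1/2$ the Sobolev embedding $H^{s}\hookrightarrow L^\infty$ and the algebra-type estimate $\|uu_x\|_{H^{s-1/2}}\lesssim\|u\|_{H^s}\|u\|_{H^{s+1/2}}$ let one close a contraction in the space $X_T=C([0,T],H^s_0)\cap L^2(0,T,H^{s+1/2}_0)$ on a ball of small radius, \emph{uniformly in $T$} provided $\rho$ is small enough; the gain of $1/2$ derivative from the smoothing exactly compensates the loss of $1/2$ derivative in the nonlinearity, which is why one needs $s>1/2$ rather than $s\ge 0$. This yields the existence and uniqueness part, and simultaneously shows $u\in L^2(0,T,H^{s+1/2}_0)$.

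Next I would prove the exponential decay. At the $L^2$ level, Theorem \ref{main1} already gives $\|u(t)\|\le Ce^{-\lambda t}\|u_0\|$ once we know the solution just constructed is the weak solution in the sense of vanishing viscosity (which it is, by uniqueness, since the smooth solutions of \eqref{BOviscosity} converge to it). So it remains to upgrade $L^2$-decay to $H^s$-decay. For this I would run the $H^s$ energy estimate on a unit time window $[n,n+1]$: differentiating $\|u(t)\|_s^2$, the damping term contributes a nonnegative dissipation $\|D^{1/2}G\Lambda^s u\|^2$-type quantity (here $\Lambda=\langle D\rangle$), while the nonlinear commutator term $\langle \Lambda^s(uu_x),\Lambda^s u\rangle$ is controlled, via Kato–Ponce commutator estimates, by $\|u\|_{L^\infty}\|u\|_s^2+\|u_x\|_{L^\infty}\|u\|_s^2\lesssim \|u\|_{H^s}\|u\|_s^2$ when $s>3/2$, or more carefully by $\|u\|_{H^{s+1/2}}\|u\|_s^2$ using the $H^{s+1/2}$ smoothing norm for $1/2<s\le 3/2$. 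Integrating over $[n,n+1]$ and using that $\int_n^{n+1}\|u\|_{H^{s+1/2}}^2\,dt$ is finite and, by the $L^2$-decay combined with interpolation against the uniform $H^s$ bound, actually decays, one obtains $\sup_{[n+1,n+2]}\|u\|_s^2\le \theta \sup_{[n,n+1]}\|u\|_s^2 + (\text{exponentially small})$ for some $\theta<1$ once $\rho$ is small; iterating over $n$ gives $\|u(t)\|_s\le Ce^{-\lambda t}\|u_0\|_s$.

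The main obstacle I anticipate is the interplay in the decay step between the smoothing norm and the nonlinear estimate: one must show that $\int_n^{n+1}\|u(t)\|_{H^{s+1/2}}^2\,dt$ not only is finite but tends to zero as $n\to\infty$, which requires a bootstrap — first get uniform-in-$n$ smallness of this quantity from the uniform $H^s$ bound and the contraction estimate on $[n,n+1]$, then feed in the $L^2$-decay and interpolate ($\|u\|_{H^{s+1/2}}\le\|u\|^{1-\vartheta}\|u\|_{H^\sigma}^{\vartheta}$ for suitable $\sigma\le 2$ and $\vartheta\in(0,1)$, using the \emph{a priori} uniform bound in $H^s$ and, if needed, persistence of $H^2$ regularity for $H^2$ data plus a density argument) to conclude the exponential gain. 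The restriction $s\le 2$ is presumably exactly what is needed so that this interpolation, and the propagation-of-regularity input behind \eqref{sm}, remain available. A secondary technical point is justifying all these formal energy computations at the regularity $H^s$ with $s>1/2$; this is handled in the standard way by performing them first on the viscous approximations \eqref{BOviscosity} (or on mollified data) and passing to the limit, using the already-established uniform bounds.
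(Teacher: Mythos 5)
Your well-posedness half is essentially the paper's own argument (Theorem \ref{thmA}): Duhamel with the semigroup generated by $-\mathcal{H}\partial_x^2-GDG$, the smoothing estimate of Proposition \ref{prop10}, the bilinear bound $\|(uv)_x\|_{L^2(0,T;H^{s-\frac12})}\le C\|u\|_{Z_{s,T}}\|v\|_{Z_{s,T}}$ via $H^s\hookrightarrow L^\infty$, and a contraction in $Z_{s,T}$. The decay half, however, has a genuine gap. You derive the $L^2$ decay of your solution from Theorem \ref{main1} together with the assertion that the contraction-mapping solution coincides with the weak vanishing-viscosity solution ``by uniqueness''; but the paper explicitly states that uniqueness of weak solutions in the sense of vanishing viscosity is an open issue, so this identification is precisely what is not available. (It could be repaired: since your solution lies in $L^2(0,T;H^{s+\frac12})\subset L^2(0,T;H^1)$, the identity \eqref{identity} is rigorous for it, and one can rerun the observability/compactness argument of Theorem \ref{thmstab1} directly with $\varepsilon=0$, which Proposition \ref{unique_continuation} permits --- but some such argument must actually be supplied.) A second soft spot is the $H^s$ upgrade: Claim 1 only yields $-(G(D(Gu)),u)_s\le C\|u\|_s^2-\|D^{\frac12}(Gu)\|_s^2$, with a \emph{positive} defect $C\|u\|_s^2$, so the $H^s$ energy inequality does not self-improve; your window iteration needs the contraction factor $\theta<1$ to come from somewhere, and the interpolation you sketch requires either a uniform bound in a norm strictly above $H^s$ (unavailable for $H^s$ data, with no room at all at $s=2$) or a rather delicate use of the time-integrated $H^{s+\frac12}$ norm at well-chosen times, which you have not pinned down.

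The paper avoids both difficulties by a different route. It first shows that the \emph{linear} damped semigroup decays exponentially in $H^0_0(\T)$ (the observability argument applied to the linear equation) and in $H^2_0(\T)$ (by applying the $L^2$ result to $v=u_t$, which solves the same linear equation), then interpolates to get $\|S(t)u_0\|_s\le Ce^{-\lambda t}\|u_0\|_s$ for all $s\in[0,2]$. It then performs a single global-in-time contraction in the exponentially weighted space $E$ with norm $\sup_{n\ge 0}e^{n\lambda}\tres u\tres_n$, where $\tres\cdot\tres_n$ is the $Z_s$-norm on $[n,n+1]$; the fixed point decays exponentially by construction, with a constant linear in $\|u_0\|_s$, and no nonlinear $L^2$ decay, no $H^s$ energy estimate, and no bootstrap are needed. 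I would recommend adopting that scheme, or else filling in the two steps identified above.
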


Finally, incorporating the same feedback law $ f =  -G(D(Gu))$ in the control input to obtain a smoothing effect, one can derive
an exact controllability result for the full equation as well.
\begin{theorem}
\label{main3}
Let $s\in (\frac{1}{2}, 2] $ and $T>0$ be given. Then there exists $\delta>0$ such that for any $u_0,u_1\in H^s_0(\T)$ satisfying
\[
\|u_0\|_{H^s(\T)} \le \delta,\quad \|u_1\|_{H^s(\T )}\le \delta
\]
one can find a control input $h\in L^2(0,T,H^{s-\frac{1}{2}} (\T))$ such that the system \eqref{BOG} admits a solution
$u\in C([0,T],H^s_0(\T))\cap L^2(0,T,H^{s+\frac{1}{2}}_0(\T ) )$ satisfying
\[
u(x,0)=u_0(x),\quad u(x,T)=u_1(x).
\]
\end{theorem}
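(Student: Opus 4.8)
The plan is to combine the feedback stabilization of Theorems \ref{main1}--\ref{main2} with an open-loop control, in the spirit of the "stabilization $+$ controllability" argument of Russell--Zhang. The key observation is that the feedback $f=-G(D(Gu))$, which already produces the smoothing effect \eqref{sm}, leaves us with the \emph{same} structure as the closed-loop equation \eqref{BOfinal}, so all the gain-of-regularity and well-posedness estimates from the proof of Theorem \ref{main2} are available. The additional control will be the open-loop part needed to hit the prescribed target $u_1$ exactly at time $T$.

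First I would set up the scheme in two halves. On $[0,T/2]$, run the closed-loop system \eqref{BOfinal} starting from $u_0$; by Theorem \ref{main2} (after shrinking $\delta$ so that $\|u_0\|_s<\rho$), this produces a solution $u\in C([0,T/2],H^s_0)\cap L^2(0,T/2,H^{s+1/2}_0)$ with $\|u(T/2)\|_s\le Ce^{-\lambda T/2}\|u_0\|_s$, and in particular the state at time $T/2$ is again small, of size $O(\delta)$. The reversibility of the BO equation (it is invariant under $x\mapsto -x$, $t\mapsto -t$) lets me do the symmetric construction backward from $u_1$: solving \eqref{BOfinal} backward on $[T/2,T]$ with terminal data $u_1$ gives a trajectory reaching some small state $v_{1/2}$ at time $T/2$, again of size $O(\delta)$. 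It then remains to steer the system, now including the open-loop control $h$ and keeping the feedback term (so the full equation is \eqref{BOG} with $f=Gh-G(D(Gu))$, which one absorbs by redefining the control), from $u(T/2)$ to $v_{1/2}$ on a time interval of length, say, $T/2$.

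The heart of the argument is therefore a \emph{local exact controllability near zero} statement: for data of size $O(\delta)$ one can find $h\in L^2(0,T',H^{s-1/2})$ driving the mixed feedback-plus-open-loop BO system from one small state to another. This I would prove by a fixed-point/Banach contraction argument anchored on the \emph{linear} controllability of Theorem A: given the linearization $u_t+\mathcal H u_{xx}=Gh$ (the feedback term is linear and lower order, so it can either be kept—Theorem B is available for the closed loop—or treated perturbatively), Theorem A provides a bounded linear control operator $\Phi$ producing the required state transfer in $H^s(\T)$, with $h=\Phi(u_0,u_1)\in L^2(0,T',H^s)$ of size comparable to the data. For the nonlinear problem, write $u=$ (linear part driven by $h$) $+$ (Duhamel term from $-uu_x$), choose $h$ to compensate also the endpoint contribution of the nonlinear term, and close the estimates using the smoothing effect \eqref{sm} and the bilinear/trilinear estimates already established for $H^s_0$, $s>1/2$, in the proof of Theorem \ref{main2}. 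Smallness of $\delta$ makes the nonlinear map a contraction on a small ball of $C([0,T'],H^s_0)\cap L^2(0,T',H^{s+1/2}_0)$.

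The main obstacle, as in the rest of the paper, is controlling the quadratic nonlinearity $uu_x$ \emph{without} the gauge transform: the product $uu_x$ loses a derivative, and in the periodic setting at low regularity $s>1/2$ one cannot simply iterate in $C([0,T],H^s)$. The resolution is exactly the smoothing estimate \eqref{sm}: the $L^2_tH^{s+1/2}_x$ gain recovers the lost derivative in the Duhamel term, and one must check that the control operator $\Phi$ from Theorem A also gains half a derivative in $t$-integrated norms (it maps into $L^2(0,T',H^s)$, and one pairs this with the $H^{s+1/2}$ smoothing to absorb $uu_x$), so that the output control sits in $L^2(0,T,H^{s-1/2})$ as claimed. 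A secondary technical point is patching the three trajectory pieces at $t=T/2$ while keeping the global regularity class and the $H^s_0$ (zero-mean) constraint; the latter is automatic since $G$ preserves zero mean by construction, and the smallness propagates through each stage because both the stabilization step and the local controllability step are Lipschitz in the data near $0$. Assembling these pieces yields the control $h\in L^2(0,T,H^{s-1/2}(\T))$ and the solution $u\in C([0,T],H^s_0(\T))\cap L^2(0,T,H^{s+1/2}_0(\T))$ with $u(\cdot,0)=u_0$, $u(\cdot,T)=u_1$.
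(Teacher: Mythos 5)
There is a genuine gap, and it sits exactly at the point you flag as ``one must check.'' Your fixed-point scheme needs the controlled \emph{linear} trajectory to live in $Z_{s,T}=C([0,T],H^s_0(\T))\cap L^2(0,T,H^{s+\frac12}_0(\T))$, because that extra half derivative in $L^2_t$ is the only thing that lets you absorb $uu_x$. But the smoothing effect \eqref{sm}/\eqref{R3} is produced by the damping operator $GDG$, not by the free evolution: on the torus the group $e^{-t\mathcal{H}\partial_x^2}$ is an isometry on every $H^s$ and has no local smoothing, so the control operator of Theorem A (which concerns $u_t+\mathcal{H}u_{xx}=Gh$) does \emph{not} ``gain half a derivative in $t$-integrated norms,'' and the Duhamel term built on the free group cannot be closed in $Z_{s,T}$. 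If instead you keep the feedback in the linearization — which is what the smoothing forces you to do — then the relevant linear system is $u_t+\mathcal{H}u_{xx}+GDGu=GD^{\frac12}k$, and its exact controllability is \emph{not} Theorem A and is not a soft perturbation of it (Theorem B gives stability of the damped system, not controllability). This is precisely the content the paper has to supply: a new observability inequality for the adjoint damped system, proved first in $H^0_0(\T)$ by compactness--uniqueness using Proposition \ref{prop10} and the unique continuation property (Proposition \ref{unique_continuation}/Lemma \ref{Fourier}), and then lifted to the $H^{-s}$ duality setting for $s\in(\frac12,2]$ via a commutator/perturbation argument (the two-step scheme \eqref{Y21}--\eqref{Y20}). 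With that controllability in hand, the contraction map $\Gamma(v)(t)=S(t)u_0-\int_0^tS(t-\tau)(vv_x)\,d\tau+W(\Lambda(u_1-S(T)u_0+\omega(v)))(t)$, built on the \emph{damped} semigroup $S(t)$ and Proposition \ref{prop10}, closes exactly as you describe, and $h=-DGu+D^{\frac12}k$ lands in $L^2(0,T,H^{s-\frac12})$.

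Two secondary points. The forward/backward stabilization preamble is unnecessary here: the data are already assumed of size $\delta$, so the local controllability step is the whole theorem, and the paper goes to it directly on $[0,T]$. Moreover the backward half of your preamble is delicate as stated: the closed-loop equation \eqref{BOfinal} is not time-reversible (the reflection--reversal symmetry of BO flips the sign of the dispersive and nonlinear terms but not of the dissipative term $-GDGu$), so ``solving \eqref{BOfinal} backward from $u_1$'' is an anti-damped problem whose well-posedness and smoothing direction you would have to address separately. Neither of these is fatal, but the missing observability inequality for the damped linearized system is.
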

Note that it would be desirable to have a control input $h$ in the class $L^2(0,T,H^s(\T ))$, but this will require to adapt the analysis in
\cite{molinet,MP}. Note also that a global controllability result in $H^0_0(\T )$ would follow from Theorems \ref{main1} and \ref{main3} if
Theorem \ref{main3} were also true for $s=0$.


The paper is organized as follows. Section 2 is concerned with the local well-posedness  and the stability properties of \eqref{BOfinal}.
We first prove the global well-posedness of \eqref{BOviscosity} in the energy space $H^0_0(\T )$, by using classical energy estimates (Theorem \ref{GWP}).
Next, we establish several technical properties, namely a commutator estimate (Lemma \ref{commutator}), a propagation of regularity
property (Propositions \ref{smoothing} and \ref{prop10}), and a unique continuation property (Proposition \ref{unique_continuation}) that are used
to derive the exponential stability of \eqref{BOviscosity} with a decay rate {\em independent of $\varepsilon$} (Theorem \ref{thmstab1}). This leads to
the proofs of Theorems \ref{main1} and \ref{main2}. Finally,
the control properties of \eqref{BOG} are investigated in Section 3.

\section{Stabilization of BO with a localized damping}
\subsection{Semi-global exponential stabilization in $L^2(\T)$}

\null ~\\
Pick any function
\begin{equation}
\label{defa}
a\in C^\infty (\T ,\R ^+) \ \text{ with }\   \int_{\T }a(x)dx=1
\end{equation}
decomposed as $a(x)=\sum_{k\in\Z}\hat a_ke^{ikx}$.

We  are interested in the stability properties of the BO equation with localized damping
\begin{equation}
\label{BOs}
u_t+ {\mathcal H}u_{xx}+(\frac{u^2}{2})_x=-G(D(Gu)), \qquad u(0)=u_0,
\end{equation}
where
\begin{equation}
\label{defG}
\widehat{{\mathcal H}u}_k=-i\, \text{sgn}(k)\hat u_k,\quad
\widehat{D^s u}_k =|k|^s \hat u_k,\quad
(G u)(x)=a(x)(u(x)-\int_{\T}a(y)u(y)dy).
\end{equation}
We shall assume that $u_0\in H^0_0(\T)$, where for any $s\in\R$,
\[
H^s_0(\T)=\{u=\sum_{k\in \Z} {\hat u}_ke^{ikx}\in H^s(\T );\
{\hat u}_0=0\}.
\]
Let $(u,v)=\int_{\T} u(x)v(x)dx$ denote the usual scalar product in $L^2(\T)$ with
$\|u\|=\|u\|_{L^2(\T)}$ as associated norm, and for any $s\in \R$, let  $(u,v)_s=((1-\partial _x^2)^{\frac{s}{2}} u,(1-\partial _x^2)^{\frac{s}{2}} v)$
denote the scalar product in $H^s(\T)$ with corresponding norm $\|u\|_s=(u,u)_s^{\frac{1}{2} } $. Let
$\langle x\rangle := (1+|x|^2) ^{\frac{1}{2}}$ for any $x\in\R$.

Note that for $s<0$ and $u\in H^s(\T )$, $Gu$ has to be understood as
\[
Gu= a\left( u - \langle u,a\rangle_{H^{s}(\T),H^{-s}(\T)}\right) .
\]

Assuming that $u_0\in H^0_0(\T)$, we obtain (formally) by scaling in
\eqref{BOs} by $u$  that
\begin{equation}
\label{identity}
\frac{1}{2} \|u(T)\|^2  + \int_0^T \|D^{\frac{1}{2}}(Gu)\|^2 dt
=\frac{1}{2} \|u_0\|^2.
\end{equation}
This suggests that the energy is dissipated over time. On the other hand,
\eqref{identity} reveals a smoothing effect, at least in the region $\{ a>0\}$.
Using a {\em propagation of regularity property} in the same vein as  in \cite{DGL,laurent,laurent2,LRZ},
we shall prove that the smoothing effect holds everywhere, i.e.
\begin{equation}
\label{SE1}
u\in L^2(0,T;H^{\frac{1}{2}}(\T )).
\end{equation}

Of course, a rigorous derivation of \eqref{identity} requires enough regularity for $u$, e.g.
\begin{equation}
\label{SE2}
u\in L^2(0,T,H^1(\T ))\cap C([0,T],H^0_0(\T )).
\end{equation}

As there is a gap between \eqref{SE1} and \eqref{SE2}, we are let to put some artificial viscosity
in \eqref{BOs} (parabolic regularization method) to derive in a rigorous way
the energy identity for the $\varepsilon -$BO equation
\begin{equation}
u_t+{\mathcal H} u_{xx} +uu_x= \varepsilon u_{xx} -G(D(Gu)), \qquad u(0)=u_0.
\label{BOe}
\end{equation}
We shall prove the global well-posedness (GWP) of \eqref{BOe} in $H^0_0$, together with the semi-global exponential stability in $H^0_0$
with a decay rate {\em uniform} in $\varepsilon >0$. Letting $\varepsilon\to 0$, this will
give the semi-global exponential stability in $H^0_0$ of the weak solutions $u\in C_w([0,+\infty ),H^0_0(\T ))$
of \eqref{BOs} obtained as limits of the (strong) solutions of \eqref{BOe}.
The (difficult) issue of the uniqueness of a weak solution to \eqref{BOs}
will not be addressed here.

We first establish the GWP of  \eqref{BOe}.
\begin{theorem}
\label{GWP}
Let $\varepsilon >0$ and $u_0\in H^0_0(\T )$. Then for any $T>0$ there exists
a unique solution $u\in C([0,T], H^0_0(\T ))\cap L^2(0,T;H^1(\T))$
of \eqref{BOe}. Moreover
\begin{equation}
\label{parabolic}
u\in C((0,T],H^2(\T ))\cap C^1((0,T],H^1(\T)),
\end{equation}
and for any $t\ge 0$
\begin{equation}
\frac{1}{2}  \|u(t)\|^2
+ \varepsilon \int_0^t \|u_x(\tau )\|^2 d\tau
 + \int_0^t  \|D^{\frac{1}{2}}(G u )  (\tau )  \|^2 d\tau
=  \frac{1}{2}   \|u _0\|^2. \label{identitybis}
\end{equation}
\end{theorem}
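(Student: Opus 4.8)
The plan is to treat \eqref{BOe} as a semilinear parabolic equation, to solve it locally by the contraction mapping theorem in a space that encodes the instantaneous parabolic smoothing, then to bootstrap the regularity of the solution, and finally to establish \eqref{identitybis} by letting the lower endpoint of a time integral tend to $0$. Write $A_\varepsilon=\varepsilon\p_x^2-\mathcal{H}\p_x^2$; this is the Fourier multiplier of symbol $-\varepsilon k^2-ik|k|$, so it generates an analytic semigroup $\{W_\varepsilon(t)\}_{t\ge0}$ on every $H^s_0(\T)$, and the elementary inequality $\sup_{k\ne0}\langle k\rangle^{2\sigma}e^{-2\varepsilon tk^2}\le C_\sigma(\varepsilon t)^{-\sigma}$ yields the smoothing bound
\[
\|W_\varepsilon(t)f\|_{H^{s+\sigma}(\T)}\le C_\sigma\,(\varepsilon t)^{-\sigma/2}\,\|f\|_{H^s(\T)},\qquad f\in H^s_0(\T),\ \sigma\ge0,\ t>0 .
\]
Both $uu_x=\frac12\p_x(u^2)$ and $GDGu$ send zero-mean functions to zero-mean functions --- for the latter, $\int_\T Gv\,dx=\langle v,a\rangle-\langle v,a\rangle\int_\T a\,dx=0$ by \eqref{defa} --- so the flow preserves $H^0_0(\T)$; moreover, since $a\in C^\infty(\T)$, multiplication by $a$ is bounded on every $H^s(\T)$, which gives $\|GDGu\|_{H^{s-1}(\T)}\le C\|u\|_{H^s(\T)}$ for all $s\in\R$. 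Finally, $G$ is self-adjoint on $L^2(\T)$, as is immediate from \eqref{defG}.

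For the local theory, fix $\sigma\in(\frac12,1)$ and, for $T_0>0$, let
\[
X_{T_0}=\Big\{u\in C([0,T_0];H^0_0(\T)):\ \|u\|_{X_{T_0}}:=\sup_{[0,T_0]}\|u(t)\|+\sup_{0<t\le T_0}t^{\sigma/2}\|u(t)\|_{H^\sigma(\T)}<\infty\Big\},
\]
and consider on $X_{T_0}$ the map $\Phi(u)(t)=W_\varepsilon(t)u_0-\int_0^t W_\varepsilon(t-\tau)\big(\tfrac12\p_x(u^2)+GDGu\big)(\tau)\,d\tau$. Since $\sigma>\frac12$, $H^\sigma(\T)$ is an algebra, hence $\|\p_x(u^2)\|_{H^{\sigma-1}(\T)}+\|\p_x(u^2)\|_{H^{-1}(\T)}\le C\|u\|_{H^\sigma(\T)}^2$; combining this, the smoothing bound (gaining one derivative off $H^{\sigma-1}$ or $H^{-1}$), the estimate on $GDG$, and the Beta-function identity $\int_0^t(t-\tau)^{-\alpha}\tau^{-\beta}\,d\tau=t^{1-\alpha-\beta}B(1-\alpha,1-\beta)$ for $\alpha,\beta<1$, one checks that $\Phi$ maps a ball of $X_{T_0}$ into itself and is a contraction there as soon as $T_0=T_0(\varepsilon,\|u_0\|)>0$ is small enough: the linear term $W_\varepsilon(\cdot)u_0$ is bounded in $X_{T_0}$ independently of $T_0$, while each nonlinear contribution carries a strictly positive power of $T_0$. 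Strong continuity of $W_\varepsilon$ at $t=0$ then gives $\Phi(u)\in C([0,T_0];H^0_0(\T))$ with $\Phi(u)(0)=u_0$, and the fixed point is the desired local solution.

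Restarting the equation at an arbitrary $t_1\in(0,T_0)$ from the datum $u(t_1)\in H^\sigma(\T)$ and iterating the same scheme with higher Sobolev exponents, the smoothing bound propagates regularity, so that $u(t)\in H^m(\T)$ for every $m\in\N$ and every $t>0$; in particular $u\in C((0,T];H^2(\T))$, and then $u_t=A_\varepsilon u-uu_x-GDGu\in C((0,T];H^1(\T))$, which is \eqref{parabolic}. For $0<\delta<t$ the function $u$ is a classical solution on $[\delta,t]$; testing the equation against $u$ and using the skew-adjointness of $\mathcal{H}\p_x^2$, the identity $\int_\T u^2u_x\,dx=0$, and the self-adjointness of $G$ (so that $(GDGu,u)=(DGu,Gu)=\|D^{\frac12}(Gu)\|^2$) we obtain
\[
\tfrac12\|u(t)\|^2-\tfrac12\|u(\delta)\|^2=-\varepsilon\int_\delta^t\|u_x(\tau)\|^2\,d\tau-\int_\delta^t\|D^{\frac12}(Gu)(\tau)\|^2\,d\tau .
\]
Letting $\delta\to0^+$, the left side tends to $\tfrac12\|u(t)\|^2-\tfrac12\|u_0\|^2$ by continuity in $L^2(\T)$, while the right side increases to $-\varepsilon\int_0^t\|u_x\|^2-\int_0^t\|D^{\frac12}(Gu)\|^2$; finiteness of the left side forces the two integrals to be finite and yields \eqref{identitybis}. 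In particular $u_x\in L^2(0,T;L^2(\T))$, hence $u\in L^2(0,T;H^1(\T))$, and \eqref{identitybis} gives $\|u(t)\|\le\|u_0\|$ throughout the interval of existence; since the local existence time depends only on $\varepsilon$ and a bound for the $L^2$-norm of the data, this a priori bound lets the solution be continued to all $t\ge0$. Uniqueness follows from an energy estimate on the difference $w=u-v$ of two solutions in $C([0,T];H^0_0(\T))\cap L^2(0,T;H^1(\T))$: $w$ solves $w_t=A_\varepsilon w-\tfrac12\p_x((u+v)w)-GDGw$ with $w(0)=0$, and (after mollifying in $x$, using the smoothing of $u,v$ for $t>0$) one gets $\tfrac{d}{dt}\|w(t)\|^2\le\tfrac{C}{\varepsilon}\|u(t)+v(t)\|_{L^\infty(\T)}^2\|w(t)\|^2$; since $H^1(\T)\hookrightarrow L^\infty(\T)$ and $u,v\in L^2(0,T;H^1(\T))$, the coefficient lies in $L^1(0,T)$, and Gronwall's inequality forces $w\equiv0$.

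The crux is the low regularity $H^0_0(\T)$: because $L^2(\T)$ is not a Banach algebra, the quadratic term cannot be handled directly, which is why the parabolic smoothing must be built into the fixed-point space through the time weight $t^{\sigma/2}$; and it is again this instantaneous smoothing ($u\in C((0,T];H^2(\T))$) that makes the rigorous derivation of the energy identity \eqref{identitybis} possible via the limit $\delta\to0^+$, rather than only an inequality.
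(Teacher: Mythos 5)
Your argument is correct, but it follows a genuinely different route from the paper's. The paper absorbs the damping into the linear part, shows via Henry's perturbation theorem that $({\mathcal H}-\varepsilon)\partial_x^2+GDG$ is sectorial, and runs the contraction in $Y_{s,T}=C([0,T];H^s_0(\T))\cap L^2(0,T;H^{s+1}_0(\T))$, whose $L^2_tH^{s+1}_x$ component is supplied by an energy estimate (Lemma \ref{linearestim} together with a commutator bound for $GDG$); the quadratic term is then closed with $\int_0^T\|v\|_{L^\infty}^2\,dt\le C\sqrt{T}\,\|v\|_{L^\infty_tL^2_x}\|v\|_{L^2_tH^1_x}$, the identity \eqref{identitybis} is obtained directly on $[0,t]$ because $u\in L^2(0,T;H^1)$ and $u_t\in L^2(0,T;H^{-1})$ come for free from the solution space, and \eqref{parabolic} is proved by differentiating the equation in time and restarting. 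You instead treat both $uu_x$ and $GDGu$ as Duhamel perturbations of the heat-type semigroup and work in the Fujita--Kato/Weissler space with the weight $t^{\sigma/2}$, recover $L^2(0,T;H^1)$ only a posteriori from the energy identity via the $\delta\to0^+$ limit and monotone convergence, and get smoothing by bootstrap. Both are sound; the paper's choice has the practical advantage that the $L^2_tH^{s+1}_x$ bound is built in (it is reused later in the propagation-of-regularity arguments), while yours gives $u(t)\in H^m(\T)$ for every $m$ in one sweep and yields uniqueness in the full class $C([0,T];H^0_0(\T))\cap L^2(0,T;H^1(\T))$ by Gronwall, which is cleaner than uniqueness in a ball.

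One detail in your ``one checks'' step deserves care: for the $\sup_{[0,T_0]}\|\Phi(u)(t)\|_{L^2}$ component you cannot pair the bound $\|\partial_x(u^2)\|_{H^{-1}}\le C\|u\|_{H^\sigma}^2\le C\tau^{-\sigma}\|u\|_{X_{T_0}}^2$ with a full derivative gain, since $\int_0^t(t-\tau)^{-1/2}\tau^{-\sigma}\,d\tau\sim t^{1/2-\sigma}$ is unbounded near $t=0$ when $\sigma>1/2$. You must either gain only $1-\sigma$ derivatives off $H^{\sigma-1}$ (giving the exponent $t^{(1-\sigma)/2}>0$) or use the bilinear bound $\|u^2\|_{L^2}\le\|u\|_{L^\infty}\|u\|_{L^2}\le C\tau^{-\sigma/2}\|u\|_{X_{T_0}}^2$. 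The ingredients you list do contain a working combination, so this is a presentational rather than a substantive gap.
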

\noindent
{\bf Proof:} The proof of Theorem \ref{GWP} is divided into five parts.
Note that the weak smoothing effect \eqref{SE1} will be established later, as it
is not needed here. \\

\noindent
{\sc Step 1. Linear Theory}\\
We consider the linear system
\[
u_t + ({\mathcal H}-\varepsilon) u_{xx} + G(D(Gu))=0,\qquad u(0)=u_0.
\]
Let $ A u =({\mathcal H} -\varepsilon) u_{xx}$
with domain ${\mathcal D}( A )  = H^2_0(\T )\subset
H^0_0(\T )$, and $ B u = G(D (Gu))$.
Clearly $G\in {\mathcal L}(H^r(\T ),H^r_0(\T ))$ for all $r\in \R$, hence
$ B \in {\mathcal L} ( H^1_0(\T ), H^0_0(\T ))$.
Let $\theta _0\in (\arctan \varepsilon ^{-1}, \pi/2)$. Then, for
$\theta _0 < |\text{arg }\lambda | \le \pi$, we have
\[
\|( A -\lambda )^{-1}\|
\le \sup_{k\ne 0} | (\varepsilon + i \, \text{sgn } k ) k^2 -\lambda |^{-1}
\le \frac{C}{|\lambda |}\cdot
\]
It follows that $A$ is a sectorial operator (see \cite[Definition 1.3.1]{henry}) in $H^0_0(\T )$.
Note that $\sigma ( A )= \{
(\varepsilon + i\, \text{sgn } k)k^2;\ k\in \Z ^* \}$. Therefore,
$\text{Re } \sigma ( A ) \ge \varepsilon$ and
$A^{-\alpha}$ is meaningful for all $\alpha >0$. Since for all $s>0$
\[
\|A^{-\frac{s}{2}} u\|^2_{H^s(\T )}
\le C\sum_{k\ne 0} |\varepsilon + i\, \text{sgn }k|^{-s} |{\hat u}_k|^2
\le C \|u\|^2_{L^2(\T)}
\]
we infer that $ B A^{-\frac{1}{2}} \in {\mathcal L} (H^0_0(\T))$.
It follows from \cite[Corollary 1.4.5]{henry} that the operator
${\mathcal A} := A + B$ is also sectorial, so that $-{\mathcal A}$ generates an
analytic semigroup $\big( {\mathcal S}(t) \big) _{t\ge 0} = (e^{-t {\mathcal A}} )_{t\ge 0}$
on $H^0_0(\T)$ according to  \cite[Theorem 1.3.4]{henry}.
Note that, by \cite[Theorem 1.4.8]{henry}, $D((A+B+\lambda )^\alpha)=D(A^\alpha )=H_0^{2\alpha} (\T ) $ for all
$\alpha\ge 0$ and $\lambda >0$ large enough, hence
\[
{\mathcal S}(t)H_0^{s}(\T )  \subset H_0^{s} (\T ),\qquad \forall t>0,\ \forall s \ge 0.
\]
Let us derive estimates for the solutions of the Cauchy problem
\begin{equation}
\label{AB}
u_t +  {\mathcal A}  u =f,\qquad
u(0)=u_0.
\end{equation}
For any $T>0$ and any $s\in \N$, let
\begin{equation}
Y_{s,T}=C([0,T];H^s_0( \T )) \cap L^2(0,T; H^{s+1}_0(\T ))
\end{equation}
be endowed with the norm
\begin{eqnarray}
\|u\|_{Y_{s,T}} = \|u\|_{L^\infty (0,T;H^s(\T ) )} +\|u\|_{L^2(0,T;H^{s+1}(\T ) )} .
\end{eqnarray}
\begin{lemma}
\label{linearestim}
We have for some constant $C_0=C_0(\varepsilon, s,T)$
\[
\|u\|_{Y_{s,T}} \le C_0 \left( \|u_0\|_{s} + \| f \|_{L^1(0,T,H^s (\T ) ) } \right),
\]
$u$ denoting the mild solution of \eqref{AB} associated with $(u_0,f)\in H^s_0(\T )\times L^1(0,T,H^s_0(\T ))$.
\end{lemma}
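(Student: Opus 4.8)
The plan is to obtain the estimate as an \emph{a priori} bound for smooth data by a parabolic energy argument, and then to recover the general case by density; the sectorial-operator machinery set up above is used only to produce the mild solution of \eqref{AB} and, for smooth enough data, a genuine classical solution on which the energy identity is licit. What makes the energy method work is the structure $\partial_t+\mathcal A=\partial_t-\varepsilon\partial_x^2+\mathcal H\partial_x^2+B$, in which $-\varepsilon\partial_x^2$ is coercive, $\mathcal H\partial_x^2$ is a skew-adjoint Fourier multiplier, and $B=G D G$ is of order one.

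So assume first that $u_0\in H^\infty_0(\mathbb T)$ and $f\in C^1([0,T];H^\infty_0(\mathbb T))$, so that $u$ is a classical solution of \eqref{AB}, and take the $H^s$-scalar product of the equation with $u$. Then: (i) $(\mathcal Hu_{xx},u)_s=0$, since $\mathcal H\partial_x^2$ has the purely imaginary symbol $i|k|k$ and commutes with $(1-\partial_x^2)^{s/2}$; (ii) $-\varepsilon(u_{xx},u)_s=\varepsilon\|u_x\|_s^2=\varepsilon(\|u\|_{s+1}^2-\|u\|_s^2)$; (iii) since $G\in\mathcal L(H^r(\mathbb T),H^r_0(\mathbb T))$ for every $r$ and $D$ is of order one, $B\in\mathcal L(H^s_0(\mathbb T),H^{s-1}_0(\mathbb T))$, whence $|(Bu,u)_s|\le\|Bu\|_{s-1}\|u\|_{s+1}\le C\|u\|_s\|u\|_{s+1}$. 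Combining these with $|(f,u)_s|\le\|f\|_s\|u\|_s$ and Young's inequality gives, with $C_\varepsilon$ depending only on $\varepsilon$ and $s$,
\[
\frac{d}{dt}\|u\|_s^2+\varepsilon\|u\|_{s+1}^2\le C_\varepsilon\|u\|_s^2+2\|f\|_s\|u\|_s .
\]
Dropping the second term on the left and applying Gronwall's lemma yields $\|u(t)\|_s\le e^{C_\varepsilon T/2}\big(\|u_0\|_s+\|f\|_{L^1(0,T;H^s)}\big)$ for $t\in[0,T]$; integrating the inequality over $[0,T]$ and inserting this bound on the right then gives $\int_0^T\|u\|_{s+1}^2\,dt\le C(\varepsilon,s,T)\big(\|u_0\|_s+\|f\|_{L^1(0,T;H^s)}\big)^2$. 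Hence $\|u\|_{Y_{s,T}}\le C_0\big(\|u_0\|_s+\|f\|_{L^1(0,T;H^s)}\big)$ for smooth data.

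To pass to general $(u_0,f)\in H^s_0(\mathbb T)\times L^1(0,T;H^s_0(\mathbb T))$, approximate them by $(u_0^n,f^n)$ with $u_0^n\in H^\infty_0(\mathbb T)$ and $f^n\in C^1([0,T];H^\infty_0(\mathbb T))$, converging in $H^s_0(\mathbb T)$ and in $L^1(0,T;H^s_0(\mathbb T))$ respectively (such data being dense). Applying the estimate just proved to the differences, the classical solutions $u^n$ converge in $C([0,T];H^s_0(\mathbb T))$ and form a Cauchy sequence in $L^2(0,T;H^{s+1}_0(\mathbb T))$; their common limit is the mild solution $u$ of \eqref{AB}, which therefore lies in $Y_{s,T}$ and satisfies the announced bound.

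The argument is essentially routine; the only points requiring some attention are the density/regularity step — one must make sure the approximating $f^n$ are regular enough (e.g. $C^1$ with values in $\mathcal D(\mathcal A)$) for $u^n$ to be a classical solution, and that such functions are indeed dense in $L^1(0,T;H^s_0(\mathbb T))$ — and the bookkeeping of the $\varepsilon$-dependence: the coercivity constant in (ii) is $\varepsilon$ itself, so $C_0=C_0(\varepsilon,s,T)$ degenerates as $\varepsilon\to 0^+$, which is exactly why the lemma claims only a constant depending on $\varepsilon$.
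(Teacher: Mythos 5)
Your proof is correct and follows the same overall skeleton as the paper's: an $H^s$ energy identity for smooth data, with the $L^\infty_t H^s$ bound from Gronwall (or semigroup theory) and the $L^2_tH^{s+1}$ bound harvested from the viscous term, followed by a density argument. The one genuine difference is how the damping term $Bu=G(D(Gu))$ is handled. You treat it as a generic order-one perturbation, bounding $|(Bu,u)_s|\le \|Bu\|_{s-1}\|u\|_{s+1}\le C\|u\|_s\|u\|_{s+1}$ and absorbing it into $\varepsilon\|u\|_{s+1}^2$ by Young's inequality; this is perfectly adequate here, since the lemma allows $C_0$ to depend on $\varepsilon$, but it puts a factor $\varepsilon^{-1}$ into the constant. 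The paper instead proves (Claim 1) via commutator estimates with the smooth multiplier $a$ that $-(G(D(Gu)),u)_s\le C\|u\|_s^2-\|D^{1/2}(Gu)\|_s^2$, i.e.\ that $B$ is dissipative modulo lower-order terms, so the $\varepsilon$-dissipation is never invoked to control it. That sharper structure is not needed for Lemma \ref{linearestim} itself, but it is exactly what the paper reuses later (Lemma \ref{lem100}, Proposition \ref{prop10}, and the uniform-in-$\varepsilon$ estimates) where $\varepsilon=0$ or the constant must not degenerate as $\varepsilon\to 0^+$; your route would not survive there. As a statement-by-statement matter, your argument proves the lemma.
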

\noindent
{\em Proof of Lemma  \ref{linearestim}.}  It is well known from classical semigroup theory that
\[
\|u\|_{L^\infty (0,T,H^s(\T ) ) } \le C\left( \|u_0\|_s + \| f \|_{L^1(0,T,H^s(\T ) ) }  \right) \cdot
\]
Next we estimate $\|u\|_{L^2(0,T,H^{s+1}(\T ))}$. We first
assume $u_0\in H^{s+2}_0(\T )$ and $f\in C([0,T]; H^{s+2}_0(\T ))$, so that
$u\in C([0,T];H^{s+2}_0(\T ))\cap C^1([0,T];H^s_0(\T ))$.
Taking the scalar product of each term of \eqref{AB} by $u$ in $H^s(\T )$
results in
\begin{equation}
\label{energyeps}
\frac{1}{2} \|u(t)\|_{s}^2
+\varepsilon \int_0^t \|u_x\|_s^2\, d\tau
+\int_0^t (G(D(Gu)), u)_s\,  d\tau
 = \frac{1}{2}\|u_0\|_s^2 +
\int_0^t (f,u)_s \, d\tau.
\end{equation}
The identity
\eqref{energyeps} is also true for $u_0\in H^s_0(\T)$ and
$f\in L^1(0,T,H^s_0(\T))$, by density.
The following claim is needed.\\
{\sc Claim 1.}  For any $s\in \R$, there exists a constant $C=C(s)>0$ such that
\[
- \big( G(D(Gu) ) , u\big) _s \le C \|u\|^2_{s} -\|D^{\frac{1}{2}} (Gu)\|^2_s\qquad \forall u\in H_0^{s+1}(\T ).
\]
{\em Proof of Claim 1.} We have
\begin{eqnarray*}
\big( G(D(Gu) ) , u \big)_{s}  &=& \big( (1-\partial^2 _x)^{\frac{s}{2}} G(D(Gu) ) ,
(1-\partial _x^2 )^{\frac{s}{2}} u \big) \\
&=& \big( [(1-\partial ^2_x)^{\frac{s}{2}},G] D(Gu), (1-\partial _x^2)^{\frac{s}{2}} u  )  \\
&&\qquad +(G(1-\partial _x^2)^{\frac{s}{2}} D(Gu), (1-\partial _x ^2)^{\frac{s}{2}} u  ) \\
&=:& I_1+ I_2.
\end{eqnarray*}
Since $a\in C^\infty (\T )$, we easily obtain that
\[
\| [ (1-\partial_x^2)^{\frac{s}{2}},G] u \| \le C \|u\| _{s-1}.
\]
It follows that
\[
| I_1 | \le C \|u\|^2_{s}.
\]
On the other hand
\begin{eqnarray*}
I_2 &=& \big(  (1-\partial _x^2) ^{\frac{s}{2}} D(Gu) ,G(1-\partial _x^2)^{\frac{s}{2}} u \big) \\
&=& \|(1-\partial _x^2 )^{\frac{s}{2}} D^{\frac{1}{2}}(Gu)\| ^2
+ ((1-\partial _x^2)^{\frac{s}{2}}(Gu), D[G,(1-\partial _x^2)^{\frac{s}{2}}]u),
\end{eqnarray*}
hence
\[
-I_2 \le C \|u\|^2_{s} -\|D^{\frac{1}{2}} (Gu)\|^2_s \cdot
\]
The claim is proved.\\
Combining Claim 1 with \eqref{energyeps}, we obtain that  for  $t=T$
\begin{eqnarray*}
&&\frac{1}{2}  \|u(T)\|_s^2
+ \varepsilon \int_0^T \|u_x(\tau )\|_s^2 d\tau
 + \int_0^T  \|D^{\frac{1}{2}}(G u) \|^2_s d\tau \\
 &&\qquad \le \frac{1}{2} \|u_0\|^2_s + C\|u\|^2_{L^2(0,T,H^s(\T ) )} +
 \frac{1}{2}\|u\|^2_{L^\infty (0,T ,H^s(\T ) )}
+ \frac{1}{2}\| f \|^2_{L^1(0,T,H^s( \T )) } \\
&&\qquad  \le C\big( \|u_0\|^2_s + \| f \|^2_{L^1(0,T,H^s(\T ))} \big) \cdot
\end{eqnarray*}
The proof of  Lemma \ref{linearestim} is achieved.\qed

\begin{remark}\label{rem} We observe that when $u_0\equiv 0$ in \eqref{AB} then
\begin{equation}
\|\int_0^t {\mathcal S}(t-\tau )f(\tau )\,d\tau \|_{Y_{s,T}}\le C(\epsilon,s,T)\,\|f\|_{L^1(0,T,H^s(\mathbb T))},
\end{equation}
and when $f\equiv 0$ in \eqref{AB}
\begin{equation}
\|{\mathcal S}(t)u_0\|_{Y_{s,T}}\le C(\epsilon,s,T)\|u_0\|_{H^s(\mathbb T)}.
\end{equation}
\end{remark}

\noindent{\sc Step 2. Local Well-posedness in $H^s_0(\T ), \ s\ge 0$}\\
We prove the following
\begin{proposition}
\label{prop1}
Let $s\ge 0$. For any $u_0\in H^s_0(\T )$, there exists some  $T>0$ such that
the problem \eqref{BOe} admits a unique solution $u\in Y_{s,T}$.
\end{proposition}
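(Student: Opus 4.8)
The plan is to prove local well-posedness by a standard fixed-point argument for the integral (Duhamel) formulation of \eqref{BOe}, exploiting that the linear part generates an analytic semigroup with the smoothing incorporated in the estimate of Lemma \ref{linearestim}. Writing the equation as $u_t + \mathcal A u = -uu_x = -\frac12 (u^2)_x$, a solution in $Y_{s,T}$ is sought as a fixed point of the map
\[
\Gamma(u)(t) = \mathcal S(t) u_0 - \frac12 \int_0^t \mathcal S(t-\tau)\, \partial_x\big( u^2(\tau)\big)\, d\tau .
\]
By Remark \ref{rem} (or directly by Lemma \ref{linearestim}), $\|\Gamma(u)\|_{Y_{s,T}} \le C(\varepsilon,s,T)\big( \|u_0\|_s + \| \partial_x(u^2)\|_{L^1(0,T,H^s(\T))}\big)$, so everything reduces to a nonlinear estimate controlling $\| \partial_x(u^2)\|_{L^1(0,T,H^s(\T))}$ by a power of $\|u\|_{Y_{s,T}}$ with a constant that is small when $T$ is small.

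First I would establish the bilinear estimate: for $s\ge 0$,
\[
\big\| \partial_x(uv)\big\|_{H^s(\T)} \le C\, \|u\|_{H^{s+1}(\T)}\, \|v\|_{H^{s+1}(\T)} ,
\]
since $H^{s+1}(\T)\hookrightarrow H^{s+1}(\T)$ is an algebra for $s+1>1/2$ (true for all $s\ge 0$), and differentiation costs one derivative. Hence $\|\partial_x(u^2)\|_{H^s} \le C\|u\|_{H^{s+1}}^2$, and integrating in time,
\[
\big\| \partial_x(u^2)\big\|_{L^1(0,T,H^s(\T))} \le C\int_0^T \|u(\tau)\|_{H^{s+1}}^2\, d\tau \le C\, T^{0}\cdot \|u\|_{L^2(0,T;H^{s+1})}^2 \le C\,\|u\|_{Y_{s,T}}^2 .
\]
Wait --- this gives a bound quadratic in the $Y_{s,T}$ norm but without an explicit small factor from $T$. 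To close the contraction one gains smallness instead from the ball: choose $R = 2C_0\|u_0\|_s$ and restrict $\Gamma$ to the closed ball $B_R$ of $Y_{s,T}$; then $\|\Gamma(u)\|_{Y_{s,T}}\le C_0\|u_0\|_s + C_0 C R^2$, which is $\le R$ provided $R$ is small, i.e. provided $\|u_0\|_s$ is small. To remove the smallness restriction on $u_0$ and get local existence for arbitrary data, I would instead extract a factor of $T$: using Hölder in time, $\|u\|_{L^2(0,T;H^{s+1})}^2 \le$ (no free $T$), so the cleaner route is to bound $\int_0^T\|u\|_{H^{s+1}}^2 \le \|u\|_{L^\infty(0,T;H^s)}\cdot(\text{something})$ — this does not work directly either. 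The correct device, standard for parabolic problems, is to use the stronger smoothing of the analytic semigroup: $\|\mathcal S(t)\|_{\mathcal L(H^{s},H^{s+1})}\le C t^{-1/2}$, so that
\[
\Big\| \int_0^t \mathcal S(t-\tau)\,\partial_x(u^2)(\tau)\,d\tau\Big\|_{H^{s+1}} \le C\int_0^t (t-\tau)^{-1/2}\|u(\tau)\|_{H^{s+1}}^2 d\tau,
\]
and a Young/Hölder estimate in time then yields a factor $T^{\alpha}$ with $\alpha>0$ multiplying $\|u\|_{Y_{s,T}}^2$. This is the key point, and it is where I expect the main (though routine) technical work to lie: juggling the time-integrability exponents so that the $t^{-1/2}$ kernel is absorbed and a genuine positive power of $T$ is produced in front of the quadratic term.

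With such an estimate, $\|\Gamma(u)\|_{Y_{s,T}} \le C_0\|u_0\|_s + C_1 T^{\alpha}\|u\|_{Y_{s,T}}^2$ and similarly $\|\Gamma(u)-\Gamma(v)\|_{Y_{s,T}}\le C_1 T^{\alpha}(\|u\|_{Y_{s,T}}+\|v\|_{Y_{s,T}})\|u-v\|_{Y_{s,T}}$, using the bilinear estimate on $u^2-v^2=(u-v)(u+v)$. Fixing $R=2C_0\|u_0\|_s$ and choosing $T>0$ small enough that $C_1 T^{\alpha} R < 1/2$, $\Gamma$ maps the ball $B_R\subset Y_{s,T}$ into itself and is a contraction there; the Banach fixed point theorem gives a unique solution in $B_R$. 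Uniqueness in the whole of $Y_{s,T}$ (not just in $B_R$) follows from a Gronwall argument applied to the difference of two solutions, again using the bilinear estimate and the continuity in time of the $H^s$ norm. Finally, the improved regularity and the continuity of the data-to-solution map follow from the analytic-semigroup smoothing exactly as in the linear case; none of this presents an obstacle. The only genuinely delicate bookkeeping is the time-weight juggling described above — everything else is the standard Picard iteration scheme.
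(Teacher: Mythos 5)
Your overall architecture is the same as the paper's: write \eqref{BOe} in Duhamel form, use Lemma \ref{linearestim}/Remark \ref{rem} to reduce everything to an estimate of $\|\partial_x(u^2)\|_{L^1(0,T,H^s(\T))}$, and run a contraction in a ball of $Y_{s,T}$. The gap is that the one step that actually makes the contraction close --- producing a positive power of $T$ in front of the quadratic term --- is exactly the step you leave as ``the main technical work'', and the device you point to does not deliver it as described. If you bound the nonlinearity by the algebra estimate $\|\partial_x(u^2)\|_{H^s}\le C\|u\|_{H^{s+1}}^2$, the right-hand side is only in $L^1_t$, and convolving an $L^1_t$ function against the singular kernel $(t-\tau)^{-1/2}$ lands in weak-$L^2_t$, not in $L^2_t$; so you cannot recover the $L^2(0,T,H^{s+1})$ component of the $Y_{s,T}$ norm this way, and in the $L^\infty(0,T,H^s)$ component you still get $C\|u\|_{L^2(0,T,H^{s+1})}^2$ with no factor of $T$. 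In short, the ``Young/H\"older juggling'' you defer is not routine along the route you chose, and for data that are not small this is precisely where the argument would break.

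The fix --- and what the paper actually does --- is to abandon the $H^{s+1}\times H^{s+1}$ algebra bound in favor of the tame (Moser--type) product estimate, putting one factor in $L^\infty_x$:
\[
\|v^1v^1_x-v^2v^2_x\|_{s}\le C\big(\|v^1-v^2\|_{L^\infty}\|v^1+v^2\|_{s+1}+\|v^1+v^2\|_{L^\infty}\|v^1-v^2\|_{s+1}\big),
\]
combined with the one-dimensional interpolation $\|v\|_{L^\infty(\T)}^2\le C\|v\|_{1}\|v\|$ and H\"older in time, which give
\[
\int_0^T\|v\|^2_{L^\infty}\,dt\le C\sqrt{T}\,\|v\|_{L^\infty(0,T,L^2(\T))}\|v\|_{L^2(0,T,H^1(\T))}\le C\sqrt{T}\,\|v\|^2_{Y_{s,T}}.
\]
Cauchy--Schwarz in time then yields $\|\partial_x(u^2)\|_{L^1(0,T,H^s(\T))}\le CT^{1/4}\|u\|^2_{Y_{s,T}}$, and the rest of your fixed-point scheme (choice of $r=2C_0\|u_0\|_s$, $T$ small so that $2rC_1T^{1/4}\le 1/2$) goes through verbatim. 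Note that this mechanism uses only the bound $\|\mathrm{Duhamel}(F)\|_{Y_{s,T}}\le C\|F\|_{L^1(0,T,H^s(\T))}$ from Lemma \ref{linearestim}; no singular-kernel smoothing of the analytic semigroup is needed, which matters here because the operator norm constants would otherwise have to be tracked in $\varepsilon$.
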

\noindent
{\em Proof. \ }
Write \eqref{BOe} in its integral form
\[
u(t)={\mathcal S} (t) u_0-\int_0^t {\mathcal S}(t-\tau ) (uu_x )(\tau ) d\tau
\]
where the spatial variable is suppressed throughout.
For given $u_0\in H^s_0(\T ) $, let $r>0$ and $T>0$ be constants to be determined. Define a map $\Gamma$ on the closed ball
\[
B=\left\{ v\in Y_{s,T};\ \|v\|_{Y_{s,T}} \le r \right\}
\]
of $Y_{s,T}$ by
\[
\Gamma  (v) (t) ={\mathcal S} (t)u_0 - \int_0^t {\mathcal S}(t-\tau )(vv_x)(\tau )\, d\tau .
\]
We aim to prove that $\Gamma$ contracts in $B$
for $T$ small enough and $r$ conveniently chosen. To that end, we shall prove the following estimates
\begin{eqnarray}
\|\Gamma (v)\|_{Y_{s,T}} &\le& C_0\|u_0\|_s + C_1T^{\frac{1}{4}} \|v\|^2_{Y_{s,T}} ,\quad \forall v\in B,\label{est1}\\
\|\Gamma (v^1)-\Gamma (v^2)\|_{Y_{s,T} } &\le& C_1 T^{\frac{1}{4}} (\|v^1\|_{Y_{s,T}}
 + \|v^2\|_{Y_{s,T}}) \|v^1-v^2\|_{Y_{s,T}}  \quad \forall v^1,v^2\in B.\qquad   \label{est2}
\end{eqnarray}

From Lemma \ref{linearestim} and Remark \ref{rem}, it is adduced that
\begin{eqnarray*}
\|\Gamma (v^1) -\Gamma (v^2)\|_{Y_{s,T}}
&\le& C\| v^1 v^1_x   - v^2  v^2_x \|_{L^1(0,T,H^s(\T )) } \\
&\le& C\int_0^T \big( \|v^1-v^2\|_{L^\infty}\|v^1+v^2\|_{s+1} + \|v^1 + v^2\|_{L^\infty} \|v^1-v^2\|_{s+1}\big) d\tau \\
&\le&CT^{\frac{1}{4}} \|v^1-v^2\|_{Y_{s,T}} \big( \|v^1\|_{Y_{s,T}} + \|v^2\|_{Y_{s,T}} \big)
\end{eqnarray*}
where we used the fact that
\[
\int_0^T \|v\|_{L^\infty}^2 dt \le C\int_0^T \|v\|_1  \|v\| dt \le
C\sqrt{T} \|v\|_{L^\infty (0,T,L^2(\T ))} \|v\|_{L^2(0,T,H^1(\T )) }.
\]
This yields \eqref{est2}. \eqref{est1} follows from Lemma \ref{linearestim}, Remark \ref{rem} and \eqref{est2}.
Choosing $r>0$ and $T>0$ so that
\begin{equation}
\left\{
\begin{array}{l}
r=2C_0\|u_0\|_s,\\
2rC_1T^{\frac{1}{4}}\le \frac{1}{2},
\end{array}
\right.
\end{equation}
we obtain that
\[
\|\Gamma (v^1)\| _{Y_{s,T}} \le r, \qquad \|\Gamma (v^1) - \Gamma (v^2)\|_{Y_{s,T}} \le \frac{1}{2} \|v^1-v^2\|_{Y_{s,T}}
\]
for any $v^1,v^2\in B$. Thus, with this choice of $r$ and $T$, $\Gamma$ is a contraction in $B$. Its fixed-point is the unique
solution of \eqref{BOe} in $B$.\\

\noindent{\sc Step 3. Global Well-Posedness in $H^0_0(\T  )$.}\\
Assume that $u_0\in H^0_0(\T )$. We first establish \eqref{identitybis} for $0\le t\le T$.
Since $u\in Y_{0,t}$, we have that
\begin{eqnarray*}
\int_0^t \|uu_x\|^2_{-1} d\tau
&\le&  C \int_0^t \|u^2\|^2 d\tau \\
&\le& C \int_0^t \|u\|^3\|u_x\|\, d\tau \\
&\le& C \sqrt{t} \|u\|^4_{Y_{0,t}}.
\end{eqnarray*}
Thus each term in \eqref{BOe} belongs to $L^2(0,t,H^{-1}(\T ))$. Scaling
in \eqref{BOe} by $u$ yields
\[
\int_0^t\langle u_t+({\mathcal H}-\varepsilon) u_{xx}  +uu_x +G(D(Gu)), u\rangle _{H^{-1}(\T ),H^1(\T ) } d\tau  =0.
\]
We have that for a.e. $\tau\in (0,t)$
\begin{eqnarray*}
&&\langle  ({\mathcal H}-\varepsilon)u_{xx},u\rangle _{H^{-1}(\T ),H^1(\T ) }= - ( ({\mathcal H } - \varepsilon )u_x,u_x)=\varepsilon \|u_x\|^2,\\
&&\langle uu_x,u\rangle _{H^{-1}(\T ), H^1(\T )} = (uu_x,u)=0,\\
&&\langle G(D(Gu)),u\rangle _{H^{-1}(\T ), H^1(\T )} =(G(D(Gu)),u) = \|D^{\frac{1}{2}}(Gu)\|^2.
\end{eqnarray*}
\eqref{identitybis} follows at once, and we infer that $\|u(t)\| \le \|u_0\|$. Using the standard extension argument, one sees that
$u$ is defined on $\R^+$ with $u\in Y_{0,T}$ for all $T>0$. Furthermore, with the constants $C_0$ and $C_1$ given in Step 2 for $s=0$ and
$T=(8C_0C_1\|u_0\|)^{-4}$, we obtain
\[
\|u(nT+\cdot )\|_{Y_{0,T}}\le 2C_0\|u(nT)\|\le 2C_0\|u_0\|.
\]

\noindent{\sc Step 4. Global Well-Posedness in $H^2_0(\T  )$.}\\
Pick any $u_0\in H^2_0(\T )$. By Proposition \ref{prop1} and Step 3, \eqref{BOe} admits a unique solution
$u\in Y_{0,T}$ for each $T>0$, which belongs to $Y_{2,T_0}$ for some $T_0>0$. We just need to show
that $T_0$ may be taken as large as desired. Let $v=u_t$. If $u\in Y_{2,T}$, then $v\in Y_{0,T}$ and it satisfies
\begin{equation}
\label{eqv}
v_t + ( {\mathcal H} -  \varepsilon )v_{xx} + (uv)_x =-G(D(Gv)),\qquad v(0)=v_0
\end{equation}
where
\[
v_0:=-\left\{  ({\mathcal H}-\varepsilon)u_{0,xx} +u_0u_{0,x} + G(D(G u_0)) \right\} \in H^0_0(\T ).
\]
We may write \eqref{eqv} in its integral form
\[
v(t) = {\mathcal S } (t) v_0 - \int_0^t {\mathcal S} (t-s) (uv)_x(s) ds.
\]
Let $\Gamma (w) (t)={\mathcal S } (t) v_0 - \int_0^t {\mathcal S} (t-s) (uw)_x(s) ds$ for $w\in Y_{0,T}$. Computations similar to those
in Step 2 lead to
\begin{eqnarray*}
\|\Gamma w\|_{Y_{0,T}} &\le & C_0\|v_0\| + C_1 T^{\frac{1}{4}} \|u\|_{Y_{0,T}}\|w\|_{Y_{0,T}},\\
\|\Gamma (w^1) -\Gamma (w^2) \|_{Y_{0,T}} &\le& C_1 T^{\frac{1}{4}} \|u\|_{Y_{0,T}} \|w^1-w^2\|_{Y_{0,T}}
\end{eqnarray*}
where the constants $C_0$ and $C_1$ depend only on $\varepsilon$ for $T<1$.  Therefore $\Gamma$ contracts in
$B= \{w\in Y_{0,\theta};\ \|w\|_{Y_{0,\theta }} \le r:= 2C_0\|v_0\|\}$, provided that
\[
C_1\theta ^{\frac{1}{4}}\|u\|_{Y_{0,\theta}} \le \frac{1}{2} \cdot
\]
Its fixed point gives the unique solution of the integral equation in $B$.
Pick $\theta$ fulfilling
\[ \theta<\min \{ (8C_0C_1\|u_0\|)^{-4}, 1\} .\]
Then, from Step 2, we have that
\[
\|u(n\theta +\cdot )\|_{Y_{0,\theta}}\le 2C_0 \|u_0\|
\]
for all $n\in \N$ and that $w$ may be extended to $[n\theta, (n+1)\theta ]$ inductively by using the contraction
mapping theorem (replacing $v_0$ by $w(\theta )$, $w(2\theta )$, etc.). Therefore, $w$ is defined on $\R ^+$ and it holds
\begin{equation}
\label{eqw}
\| w ( n\theta + \cdot ) \|_{Y_{0,\theta }} \le 2C_0 \|w(n\theta )\|\le (2C_0)^{n+1} \|v_0\|.
\end{equation}
By uniqueness of the solution of the integral equation, we have that $v(t)=w(t)$ as long as $0<t<T$ and $v\in Y_{0,T}$.
\eqref{eqw} shows that $\|v(t)\|=\|w(t)\|$ is uniformly bounded  on compact sets of $\R^+$, namely
\[
\|v\|_{Y_{0,T}} \le C(T,\|u_0\|) \|v_0\| .
\]

The same is true  for $\|u(t)\|_2$, by \eqref{BOe}.  Indeed, since
\[
\|uu_x\| \le \|u\|_{L^\infty (\T )} \|u_x\|\le \|u\|^{\frac{5}{4}}\|u_{xx}\|^{\frac{3}{4}}
\le C_\delta \|u\|^5 + \delta \|u_{xx}\| ,
\]
we infer from \eqref{BOe} that
\[
\| ( {\mathcal H}-\varepsilon ) u_{xx} (t)\| \le C(T,\|u_0\|) \|u_0\|_2 + C(\|u\| + \|u\|^5) +  \delta \|u_{xx}\|
\]
hence
\[
\|u(t)\|_2  \le C(T, \|u_0\|)  \|u_0\|_2.
\]
Using the standard extension argument, one sees that $u(t)\in H^2_0(\T )$ for
all $t\ge 0$ with $u\in Y_{2,T}$ for all $T > 0$.\\

\noindent
{\sc Step 5. Smoothing effect from $H^0_0(\T )$ to $H^2_0(\T )$.}\\
Pick any $u_0\in H^0_0(\T )$. Then the solution $u$ to \eqref{BOe} belongs to
$Y_{0,1}$. Therefore, for a.e. $t_0\in (0,1)$, $u(t_0)\in H^1_0(\T )$. The solution of \eqref{BOe}
in $Y_{1,T}$ issued from $u(t_0)$ at $t=0$ must coincide with $u(t_0+t)$ in $[0,T]$,
by uniqueness of the solution of \eqref{BOe} in $Y_{0,T}$.    In particular, $u(t_1)\in H^2_0(\T )$
for a.e. $t_1>t_0$. Again by uniqueness we conclude that $u\in C([t_1,+\infty ),H^2_0(\T))$ for a.e.
$t_1>0$, so that
\[
u\in C((0,+\infty ), H^2_0(\T ))\cap C^1((0,+\infty ),H^0_0(\T )).
\]
The proof of Theorem \ref{GWP} is complete.\qed

The following commutator lemma, used several times in the proof of the property of
propagation of regularity,  is a periodic version of a result from \cite{DMP}.

\begin{lemma}
\label{commutator}
Let ${\mathcal N}\subset \Z$ be a set such that for some constant $C>0$
\begin{equation}
\lan n \ran + \lan k \ran \le C\lan n - k \ran ,\qquad
\forall n\not\in{\mathcal N},\ \forall k\in {\mathcal N}.
\label{separation}
\end{equation}
Let $P$ be the projector on the closure
of $\text{Span}\{ e^{ikx};\ k\in \mathcal N \}$ in
$L^2(\T )$,  namely
$$P
(\sum_{k\in \Z}\hat u_k e^{ikx})=\sum_{k\in \mathcal N}\hat u_ke^{ikx}.
$$
Let $a\in C^\infty (\T )$ and let  $p\in \N,\ q\in \N$. Then there exists
some constant $C=C(a,p,q)>0$ such that for all $v\in L^2(\T )$
\begin{equation}
\label{estim_commutator}
\|\partial _x^p [a,P]\partial _x ^q v\| \le C \|v\|.
\end{equation}
\end{lemma}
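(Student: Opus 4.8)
The plan is to expand everything in Fourier series and read off the multiplier structure of the commutator; the crucial point is that hypothesis \eqref{separation} forces the ``frequency increment'' $n-k$ to lie in the region where $\lan n\ran\lan k\ran$ is controlled by a power of $\lan n-k\ran$, while the smoothness of $a$ supplies the summability.

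Write $a=\sum_{m\in\Z}\hat a_me^{imx}$ and $v=\sum_{k\in\Z}\hat v_ke^{ikx}$, and set $[a,P]\p_x^qv=\sum_{n\in\Z}c_ne^{inx}$. Computing $aP\p_x^qv$ and $P(a\p_x^qv)$ directly, one finds
\[
c_n=\sum_{k\in\mathcal N}\hat a_{n-k}(ik)^q\hat v_k\ \ \text{ if }n\notin\mathcal N,\qquad
c_n=-\sum_{k\notin\mathcal N}\hat a_{n-k}(ik)^q\hat v_k\ \ \text{ if }n\in\mathcal N,
\]
the second identity holding because the terms with $k\in\mathcal N$ appear in both $aP\p_x^qv$ and $P(a\p_x^qv)$ and hence cancel. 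Thus every pair $(n,k)$ contributing to some $c_n$ has exactly one of $n,k$ in $\mathcal N$ and the other outside $\mathcal N$; since \eqref{separation} is symmetric under exchanging the two indices (as $\lan\cdot\ran$ is even), we get $\lan n\ran+\lan k\ran\le C\lan n-k\ran$ for every such pair.

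Now I estimate $\p_x^p[a,P]\p_x^qv=\sum_n(in)^pc_ne^{inx}$. On the index pairs that matter we have $|n|^p|k|^q\le\lan n\ran^p\lan k\ran^q\le(\lan n\ran+\lan k\ran)^{p+q}\le C^{p+q}\lan n-k\ran^{p+q}$, whence
\[
|n|^p|c_n|\le C\sum_{k\in\Z}\lan n-k\ran^{p+q}\,|\hat a_{n-k}|\,|\hat v_k|=C\,(b*|\hat v|)_n,\qquad b_m:=\lan m\ran^{p+q}|\hat a_m|.
\]
Since $a\in C^\infty(\T)$, its Fourier coefficients decay faster than any power of $\lan m\ran$, so $b\in\ell^1(\Z)$. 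By Young's convolution inequality, $\big\|(|n|^pc_n)_n\big\|_{\ell^2(\Z)}\le\|b\|_{\ell^1(\Z)}\,\|(\hat v_k)_k\|_{\ell^2(\Z)}$, and Parseval's identity yields
\[
\|\p_x^p[a,P]\p_x^qv\|\le C\,\|b\|_{\ell^1(\Z)}\,\|v\|,
\]
which is \eqref{estim_commutator} with a constant $C=C(a,p,q)$.

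The step I would be most careful with is the cancellation leading to the second formula for $c_n$: without it the sum defining $c_n$ for $n\in\mathcal N$ would run over all $k\in\mathcal N$, for which $n-k$ need not be large and \eqref{separation} would give nothing. Everything else is the elementary bound $\lan n\ran\lan k\ran\le C\lan n-k\ran^2$ on the relevant index set, together with Young's inequality and the rapid decay of $\hat a$.
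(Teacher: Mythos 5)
Your proof is correct and follows essentially the same route as the paper: Fourier expansion, the observation that the commutator coefficients only involve pairs $(n,k)$ with exactly one index in $\mathcal N$ (so that \eqref{separation} applies, using the evenness of $\lan\cdot\ran$ for the pairs with $n\in\mathcal N$, $k\notin\mathcal N$), and the rapid decay of $\hat a$. The only difference is the final summation step, where you use Young's inequality $\ell^1\ast\ell^2\to\ell^2$ on the convolution $b\ast|\hat v|$ instead of the paper's Cauchy--Schwarz followed by convergence of an explicit double series; this is a cosmetic variation that yields the same estimate.
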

\begin{remark}
\label{rmk2.6}
Note that condition \eqref{separation} is fulfilled in the following cases:
(i) ${\mathcal N}=\N ^*$; (ii) ${\mathcal N}$ is a finite set, or the complement of a finite set in $\Z$. It follows that
\eqref{estim_commutator}
is true with $P={\mathcal H}=(-i)(P_{\N ^*} - P_{-\N ^*})$.
Note, however, that condition \eqref{separation} and \eqref{estim_commutator}
are not true when $\mathcal N=1+2\Z$ (pick e.g. $a(x)=e^{ix}$).
\end{remark}
\noindent
{\em Proof of Lemma \ref{commutator}.} Let ${\mathcal N},a,p$ and $q$ be as in the statement of the lemma, and pick any
$v\in C^\infty (\T )$. Decompose $a$ and $v$ in using Fourier series
$$
v(x)=\sum_{n\in \Z}{\hat v}_n e^{inx},\qquad a=\sum_{n\in \Z}
{\hat a}_n e^{inx},
$$
and denote by $1_{\mathcal N}$ the characteristic function of
$\mathcal N$, defined by $1_{\mathcal N}(n)=1$ if $n\in \mathcal N$, and $0$
otherwise. Then
\begin{eqnarray*}
[a,P]v &=&
a(Pv) -P(av)\\
&=& a (\sum_{n} 1_{\mathcal N}(n) {\hat v}_n e^{inx}) -
P(\sum_n(\sum_k {\hat a}_{n-k}{\hat v}_k )e^{inx}) \\
&=&  \sum_{n} \left( \sum_k {\hat a}_{n-k}{\hat v}_k
(1_{\mathcal N}(k) -1_{\mathcal N} (n)) \right) e^{inx}.
\end{eqnarray*}
Taking derivatives, one obtains
$$\partial_x^p [a,P]\partial _x^q v =
\sum_n \left( \sum_k {\hat a}_{n-k}(ik)^q{\hat v}_k
(1_{\mathcal N}(k) -1_{\mathcal N}(n))\right)(in)^p e^{inx}
 =:\Sigma _1 - \Sigma _2
$$
where $\Sigma _1$ (resp. $\Sigma _2$) is the sum over the $(n,k)$ with
$n\not\in\mathcal N$ and $k\in\mathcal N$
 (resp. with $n\in\mathcal N$ and $k \not\in\mathcal N$).
Let us estimate $\Sigma _1$
only, the estimate for $\Sigma _2$ being similar. Since $a\in C^\infty (\T )$,
for any $s\in \N$ there exists some constant $C_s>0$ such that
\begin{equation}
\label{smooth}
|{\hat a}_l|\le C_s \langle l\rangle ^{-s}\qquad \forall l\in \Z.
\end{equation}
Then, for $s>\sup\{ 2p+1,2q+1\}$,
\begin{eqnarray*}
\|\Sigma _1 \|^2_{L^2(\T )}
&=& \| \sum_{n \not\in\mathcal N}(\sum_{k\in\mathcal N} {\hat a}_{n-k}{\hat v}_k (ik)^q (in)^p )e^{inx} \|^2_{L^2(\T )} \\
&=& C  \sum_{n\not\in\mathcal N}
\left\vert \sum_{k\in\mathcal N}{\hat a}_{n-k}
{\hat v}_k(ik)^q \right\vert ^2 |n|^{2p}\\
&\le& C \|v\|^2\sum_{n\not\in\mathcal N}\sum_{k\in\mathcal  N}\langle n-k\rangle ^{-2s} |n|^{2p}|k|^{2q}\\
&\le& C \|v\|^2\sum_{n\not\in\mathcal N}\sum_{k\in\mathcal  N}
(\langle n \rangle + \langle k\rangle ) ^{-2s} |n|^{2p}|k|^{2q}\\
&\le& C\|v\|^2
\end{eqnarray*}
where we used the Cauchy-Schwarz inequality, \eqref{smooth} and \eqref{separation}. Since $C^\infty (\T )$ is dense in $L^2(\T)$,
the proof is complete.\qed \\
The propagation of regularity property we need is as follows.
\begin{proposition}
\label{smoothing}
Let $a\in C^\infty (\T, \R ^+)$, $\varepsilon >0$,  $\alpha \in\R$, $T>0$, and $R>0$ be given. Pick any
$v_0\in H^0_0(\T)$ with $\|v_0\| \le R$ and let $v\in C([0,T];H^0_0(\T ))\cap L^2(0,T,H^1(\T ))\cap C((0,T],H^2(\T ))$ be such that
\begin{eqnarray}
&& v_t + ({\mathcal H} -\varepsilon ) v_{xx} + \alpha v v_x =
- G (D(G \,v)),\qquad x\in \T,\ t\in (0,T)\\
&&v(0)=v_0.
\end{eqnarray}
Then there exists some constant $C=C(T)>0$ (independent of $\varepsilon$, $\alpha$ and $R$) such that
\begin{equation}
\label{H12}
\int_0^T \|D^{\frac{1}{2}}v\|^2dt \le C(R^2 + \alpha ^4 R^6).
\end{equation}
\end{proposition}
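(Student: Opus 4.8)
First I would dispose of large $\varepsilon$: for $\varepsilon\ge 1$, scaling the equation by $v$ (exactly as in the derivation of \eqref{identitybis}) gives $\int_0^T\|v_x\|^2\,dt\le(2\varepsilon)^{-1}\|v_0\|^2\le R^2/2$, hence $\int_0^T\|D^{\frac12}v\|^2\,dt\le\int_0^T\|v\|_1^2\,dt\le C(T)R^2$, which is more than \eqref{H12}. So assume $\varepsilon\in(0,1]$. The same energy identity supplies the a priori bounds
\[
\sup_{0\le t\le T}\|v(t)\|\le R,\qquad \varepsilon\int_0^T\|v_x\|^2\,dt\le\tfrac12 R^2,\qquad \int_0^T\|D^{\frac12}(Gv)\|^2\,dt\le\tfrac12 R^2;
\]
and since $Gv=av-(a,v)\,a$ with $a\in C^\infty(\T)$, also $\int_0^T\|D^{\frac12}(av)\|^2\,dt\le C(T)R^2$.

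The plan is a weighted energy estimate that converts the local smoothing near $\{a>0\}$ into global smoothing. Since ${\mathcal H}\partial_x^2$ has symbol $i|\xi|\xi$, the natural multiplier has order zero. I fix once and for all $\phi\in C^\infty(\T)$ with $\phi'=\chi-\kappa b$, where $\chi\in C^\infty(\T)$ satisfies $\chi\ge c_0>0$ on $\T$, where $b\in C^\infty(\T,\R^+)$ satisfies $\mathrm{supp}\,b\Subset\{a>0\}$ and $\int_\T b>0$, and where $\kappa:=\big(\int_\T\chi\big)/\big(\int_\T b\big)>0$, so that $\int_\T\phi'=0$ and such a $\phi$ exists. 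Writing $L:={\mathcal H}\partial_x^2$ and $M_g$ for multiplication by $g$, and using $L^*=-L$, $M_\phi^*=M_\phi$ and $({\mathcal H}w,w)=0$ for real $w$, one obtains from the equation, for $t>0$,
\[
\frac{d}{dt}(M_\phi v,v)=(v,[L,M_\phi]v)+2\varepsilon(M_\phi v,v_{xx})-2\alpha(M_\phi v,vv_x)-2(M_\phi v,G(D(Gv))).
\]
The algebraic heart of the matter is the commutator identity, obtained by expanding ${\mathcal H}\partial_x^2(\phi w)$ and using ${\mathcal H}\partial_x=D$ on $H^0_0(\T)$:
\[
[L,M_\phi]=[{\mathcal H},M_\phi]\partial_x^2+2DM_{\phi'}-{\mathcal H}M_{\phi''}.
\]
By Lemma \ref{commutator} and Remark \ref{rmk2.6} (applied to $\phi$, with $P={\mathcal H}$), $[{\mathcal H},M_\phi]\partial_x^2$ is bounded on $L^2(\T)$, and ${\mathcal H}M_{\phi''}$ is obviously bounded, so that, since $\phi'=\chi-\kappa b$,
\[
(v,[L,M_\phi]v)=2(Dv,\phi'v)+O(\|v\|^2)=2(Dv,\chi v)-2\kappa(Dv,bv)+O(\|v\|^2).
\]

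Now I integrate over $[0,T]$ (first over $[\delta,T]$, then $\delta\to 0$, which is legitimate as $t\mapsto(M_\phi v,v)$ is continuous on $[0,T]$ with $L^1(0,T)$ derivative). Put $X:=\int_0^T\|D^{\frac12}v\|^2\,dt$. The main term is coercive: writing $(Dv,\chi v)=\int_\T\chi|D^{\frac12}v|^2+(D^{\frac12}v,[D^{\frac12},\chi]v)$ and using $\chi\ge c_0$ together with the $L^2$-boundedness of $[D^{\frac12},\chi]$ (an elementary Fourier estimate of the same flavour as Lemma \ref{commutator}, using $\big||n|^{1/2}-|k|^{1/2}\big|\le|n-k|^{1/2}$), one gets $(Dv,\chi v)\ge\tfrac{c_0}{2}\|D^{\frac12}v\|^2-C\|v\|^2$. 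The boundary term $(M_\phi v,v)\big|_0^T$, the $O(\|v\|^2)$ terms, and the viscosity term (written $2\varepsilon(M_\phi v,v_{xx})=-2\varepsilon\int_\T\phi v_x^2+\varepsilon\int_\T\phi''v^2$) are all $\le C(T)R^2$ by the a priori bounds and $\varepsilon\le1$. The localized term is absorbed by the last a priori bound: writing $b=b_1a$ with $b_1\in C^\infty(\T)$ gives $|(Dv,bv)|\le C\|v\|_{H^{1/2}}\|D^{\frac12}(av)\|+C\|v\|^2$, whence $\int_0^T|(Dv,bv)|\,dt\le C(T)\big(R^2+RX^{1/2}\big)$ by Cauchy--Schwarz; similarly, using $G^*=G$ and $(f,Dg)=(D^{\frac12}f,D^{\frac12}g)$, the damping term is $\le C(T)\big(R^2+RX^{1/2}\big)$. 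For the nonlinearity, $2\alpha(M_\phi v,vv_x)=-\tfrac23\alpha\int_\T\phi'v^3$, and the one-dimensional interpolation bound $\|v\|_{L^3}^3\le C\|v\|^2\|v\|_{H^{1/2}}$ with the a priori bounds yields $\big|\alpha\int_0^T\!\int_\T\phi'v^3\,dt\big|\le C(T)|\alpha|R^2\big(R^2+X\big)^{1/2}$. Collecting everything,
\[
\tfrac{c_0}{2}X\le C(T)\Big(R^2+|\alpha|R^3+\big(R+|\alpha|R^2\big)X^{1/2}\Big),
\]
and absorbing the $X^{1/2}$ terms by Young's inequality gives $X\le C(T)\big(R^2+|\alpha|R^3+\alpha^2R^4\big)$; since $|\alpha|R^3\le\tfrac34R^2+\tfrac14\alpha^4R^6$ and $\alpha^2R^4\le\tfrac12R^2+\tfrac12\alpha^4R^6$ by Young, this is \eqref{H12}.

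The step I expect to be the crux is the choice of $\phi$ together with the commutator identity above: one must recognize that, precisely because $[{\mathcal H},M_\phi]$ is smoothing (Lemma \ref{commutator}), the only surviving top-order contribution of $[L,M_\phi]$ is $2DM_{\phi'}$, which is a \emph{nonnegative} multiple of $D$ away from $\{a>0\}$ (by the sign choice $\chi\ge c_0>0$) modulo the localized, controllable piece $-2\kappa DM_b$; this is exactly what turns ``smoothing where $a>0$'' into ``smoothing everywhere.'' The remainder is routine bookkeeping, the one delicate point being the extraction of the precise exponent $\alpha^4R^6$, which comes out of the single cubic estimate above followed by Young's inequality.
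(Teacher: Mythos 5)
Your proof is correct and rests on the same engine as the paper's: the multiplier $v\mapsto\varphi v$, the commutator $[\mathcal H\partial_x^2,\varphi]$ whose only unbounded contribution is $2\varphi'\mathcal H\partial_x=2\varphi'D$ (all other pieces being $L^2$-bounded by Lemma \ref{commutator} and Remark \ref{rmk2.6}), the use of the dissipation identity to control the localized piece through $\|D^{1/2}(av)\|_{L^2_{t,x}}\le CR$, and the $L^3$ interpolation bound for the nonlinearity followed by Young's inequality to produce the exponent $\alpha^4R^6$. Where you genuinely diverge is in the globalization step: the paper fixes $b\in C_0^\infty(\omega)$, proves the estimate for $\varphi'=b^2(\cdot)-b^2(\cdot-x_0)$ for each translate $x_0$ (each such $\varphi'$ automatically has zero mean), and then sums over a partition of unity; you instead build a \emph{single} multiplier with $\varphi'=\chi-\kappa b$, $\chi\ge c_0>0$, choosing $\kappa$ so that $\int_\T\varphi'=0$, which makes the main term coercive in one stroke and confines the correction to a single localized term $-2\kappa(Dv,bv)$. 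This is a slicker assembly of the same estimate; it buys a shorter endgame at the cost of having to notice the zero-mean normalization explicitly. Two harmless slips worth flagging: your displayed commutator identity has the wrong sign on $\mathcal H M_{\phi''}$ and omits the term $2[\mathcal H,M_{\phi'}]\partial_x$, but since that term is $L^2$-bounded by Lemma \ref{commutator} (with $p=0$, $q=1$) and you only use the identity at the level of the quadratic form modulo $O(\|v\|^2)$, the conclusion $(v,[L,M_\phi]v)=2(Dv,\phi'v)+O(\|v\|^2)$ stands; also your preliminary reduction to $\varepsilon\le 1$ is unnecessary (Poincar\'e plus $\varepsilon\int_0^T\|v_x\|^2\,dt\le R^2/2$ already controls the viscosity terms uniformly in $\varepsilon$), though it does no harm.
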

\noindent
{\em Proof of Proposition \ref{smoothing}}. Pick any $t_0\in (0,T)$. Let $(f,g)_{L^2_{t,x}}:=\int_{t_0}^T \int_{\T} f(x,t)g(x,t)\, dxdt$ denote
the scalar product in $L^2(t_0,T,L^2(\T ))$.
$C$ will denote a constant which may vary from line to line,  and which may depend on $T$, but not  on $t_0$, $\varepsilon$, $\alpha$ and $R$.
Setting $Lv:=v_t+{\mathcal H}v_{xx}$,
$f:=\varepsilon v_{xx} -G(D(Gv))$ and $g:=-\alpha vv_x$, we have that
$$
Lv=f+g.
$$
Pick any $\varphi\in C^\infty (\T )$, and set $Av=\varphi (x)v$. Noticing that
$L$ is formally skew-adjoint, we have that
\begin{eqnarray*}
([L,A]v,v)_{L^2_{t,x}}
&=&(L(\varphi v) -\varphi (Lv),v)_{L^2_{t,x}}\\
&=&(\varphi v, L^*v)_{L^2_{t,x}}  + [(\varphi v,v)]_{t_0}^T -(Lv,\varphi v)_{L^2_{t,x}}
\end{eqnarray*}
so that
\[
|  ([L,A]v,v)_{L^2_{t,x}} | \le 2|(f + g, \varphi v)_{L^2_{t,x}}| + 2  \|\varphi\|_{L^\infty (\T )} R^2 .
\]
We first notice that
\begin{eqnarray*}
|(f,\varphi v)_{L^2_{t,x}}|
&\le& |(v_x,\varepsilon (\varphi v)_x)_{L^2_{t,x}}|  +
|(D(Gv),G(\varphi v))_{L^2_{t,x}}|\\
&\le&  C \varepsilon \int_0^T \int_{\T}
(|v|^2 + |v_x|^2)dt + C\int_0^T  \|D^{\frac{1}{2}}(Gv)\|^2  dt \\
&&\qquad +\int_0^T \{ |(D(Gv),[G,\varphi ] v)| +|(D^{\frac{1}{2}}(Gv),[D^{\frac{1}{2}},\varphi](Gv))| \} d\tau \\
&\le& CR^2
\end{eqnarray*}
where we used \eqref{identitybis} and classical commutator estimates.
(Note that Theorem \ref{GWP} is still true when $\alpha =1$ is replaced by any value $\alpha \in \R$.)
On the other hand
$$
|(g,\varphi v)_{L^2_{t,x}}|=|(\alpha vv_x,\varphi v)_{L^2_{t,x}}|=\frac{| \alpha |}{3} |(v^3,\varphi _x)_{L^2_{t,x}}|.
$$
From Sobolev embedding and the fact that the $L^2-$norm is nonincreasing
$$
\|v\|_{L^3} \le \|v\|^{\frac{1}{2}}\|v\|^{\frac{1}{2}}_{L^6}
\le C R^{\frac{1}{2}} \|v\|^{\frac{1}{2}}_{\frac{1}{2}} \cdot
$$
Therefore,
\begin{eqnarray*}
|(g, \varphi v)_{L^2_{t,x}} |
&\le& C| \alpha |  \int_{t_0}^T\|v\|^3_{L^3} dt \\
&\le&  C|\alpha | R^{\frac{3}{2}}T^{\frac{1}{4}}
\left( \int_{t_0}^T \|v\|^2_{\frac{1}{2}}dt\right)^{\frac{3}{4}}\\
&\le& C\delta ^{-3}\alpha ^4 R^{6} T + \delta \int_{t_0}^T\| D^{\frac{1}{2}} v\|^2  dt
\end{eqnarray*}
where $\delta >0$ will be chosen later on.
On the other hand
\begin{eqnarray}
[L,A]v &=& [{\mathcal H}\partial _x^2,\varphi]v \nonumber \\
&=& {\mathcal H}\big( (\partial _x^2\varphi )v +2 (\partial _x \varphi)
(\partial _x v)
+\varphi\partial _x^2v \big) -\varphi {\mathcal H}\partial _x^2 v  \nonumber \\
&=& [{\mathcal H},\varphi ] \partial _x ^2 v + {\mathcal H}\big( (\partial^2 _x \varphi )v\big)
+2[{\mathcal H},\partial _x \varphi ]\partial _x v + 2(\partial _x\varphi) {\mathcal H}\partial _x v.
\label{K1}
\end{eqnarray}

It follows from Lemma \ref{commutator}  and Remark \ref{rmk2.6} that
\begin{eqnarray}
&&| ( [{\mathcal H},\varphi ] \partial _x^2 v , v )_{L^2_{t,x}} |
+ |\big(   {\mathcal H}( (\partial _x ^2 \varphi ) v), v \big)_{L^2_{t,x}} |
+ |\big(  [{\mathcal H},\partial _x\varphi ] \partial _x v , v \big) _{L^2_{t,x}} |
\nonumber\\
&&\qquad \le C\|v\|^2_{L^2(0,T;L^2(\T ))}\nonumber \\
&&\qquad \le C R ^2. \label{K2}
\end{eqnarray}
Therefore
$$
|(\partial _x \varphi {\mathcal H}\partial _x v, v)_{L^2_{t,x}}|
\le C( R^2 +\delta ^{-3}\alpha ^4 R^6) + \delta  \int_{t_0}^T \|D^\frac{1}{2} v\|^2 dt.
$$
Let $b\in C_0^\infty(\omega)$, where $\omega = \{ x\in \T; \ a(x)>0 \}$.
Then $b=a{\tilde b }$ with ${\tilde b}\in C_0^\infty (\omega)$
and
\begin{eqnarray}
\int_{t_0}^T \|D^{\frac{1}{2}} (bv)\|^2 dt
&\le& 2\int_{t_0}^T \big(  \| [D^{\frac{1}{2}},\tilde b](av)\|^2 + \|\tilde b D^{\frac{1}{2}}(av)\|^2\big) dt\nonumber \\
&\le&  C\int_{t_0}^T (\|v\|^2 + \|D^{\frac{1}{2}} (av)\|^2) dt \nonumber \\
&\le& C\int_0^T \big( \|v\|^2 + \|D^{\frac{1}{2}}(Gv)\|^2 +\|D^{\frac{1}{2}}a\|^2 |\int_{\T}a(y) v(y,t)\, dy|^2 \big)dt \nonumber\\
&\le&  CR^2.
\label{Z1}
\end{eqnarray}
Pick any $x_0\in \T$. Then $b^2(x)-b^2(x-x_0)=\partial _x\varphi $
for some $\varphi\in C^\infty (\T )$. Noticing that ${\mathcal H}\partial _x =D$, we have that
\begin{eqnarray*}
|(b^2(x){\mathcal H}\partial _x v,v)_{L^2_{t,x}}|
&=& |(b D v, b v)_{L^2_{t,x}}| \\
&\le& | ([b,D]v,bv)_{L^2_{t,x}} |  +  |( D(bv),bv)_{L^2_{t,x}}|\\
&\le& C\|v\|^2_{L^2(0,T;L^2(\T ))} +
\int_{t_0}^T \| D^{\frac{1}{2}} (bv)\|^2dt\\
&\le& CR^2
\end{eqnarray*}
by \eqref{Z1}. It follows that
$$|(b^2(x-x_0)Dv,v)_{L^2_{t,x}}|\le C(R^2 +\delta ^{-3}\alpha ^4 R^6)  + \delta \int_{t_0}^T \|D^\frac{1}{2} v\|^2dt.$$
Using a partition of unity and choosing $\delta >0$ small enough, we infer that
$$
|(Dv,v)_{L^2_{t,x}}| \le C(R^2 +\alpha ^4 R^6)  +\frac{1}{2} \int_{t_0}^T \|D^\frac{1}{2} v\|^2 dt.
$$
This gives
$$
\int_{t_0}^T \|D^\frac{1}{2} v\|^2dt \le C(R^2 +\alpha ^4 R^6),
$$
where $C=C(T)$. Letting $t_0\to 0$ yields the result.
\qed\\

A unique continuation property is also required.
\begin{proposition}
\label{unique_continuation}
Let $\alpha\in \R$, $\varepsilon \ge 0$,   $c\in L^2(0,T)$, and $u\in L^2(0,T;H^0_0(\T ) )$ be such that
\begin{eqnarray}
u_t+ ({\mathcal H} -\varepsilon )u_{xx} +\alpha uu_x=0 &&\text{ in }\  \T \times (0,T),\label{UCP1}\\
u(x,t)=c(t) &&\text{ for a.e. }\ (x,t)\in (a,b)\times (0,T)\label{UCP2}
\end{eqnarray}
for some numbers $T>0$ and $0\le a<b\le 2\pi$. Then $u(x,t)=0$ for a.e.
$(x,t)\in \T\times (0,T)$.
\end{proposition}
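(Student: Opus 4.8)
The plan is to prove the proposition by \emph{complexification}. For a.e.\ $t\in(0,T)$ associate to $u(\cdot,t)$ the holomorphic function $W(z,t)=\sum_{k\ge 1}\hat u_k(t)z^k$ on the unit disk $\mathbb{D}$; it lies in the Hardy space $H^2(\mathbb{D})$ because $\sum_{k\ge 1}|\hat u_k(t)|^2\le\|u(\cdot,t)\|^2<\infty$, and its nontangential boundary values are $W(e^{ix},t)=(P_+u)(x,t)$, where $P_+u:=\frac12(u+i\,\mathcal H u)=\sum_{k\ge 1}\hat u_ke^{ikx}$ (we use $\hat u_0=0$). The point is that the flatness hypothesis \eqref{UCP2} forces $W$ to continue holomorphically across the arc $\Gamma=\{e^{ix};\ x\in(a,b)\}$, after which \eqref{UCP1} shows that $(z\partial_z)^2W$ is constant along $\Gamma$, hence — by the identity theorem — constant on the whole connected extended domain; evaluating at $z=0$ then kills every coefficient $\hat u_k$, $k\ge 1$, and since $u$ is real with zero mean this gives $u\equiv0$. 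A pleasant feature is that nothing here sees the viscosity, so this scheme handles $\varepsilon\ge 0$ and any $\alpha\in\R$ at once.

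In detail I would proceed in three steps. \textbf{(i) $\mathcal H u(\cdot,t)$ is real-analytic on $(a,b)$.} Use the periodic kernel $(\mathcal H w)(x)=\frac1{2\pi}\,\mathrm{p.v.}\int_{\T}\cot\!\left(\frac{x-y}{2}\right)w(y)\,dy$ and split the integral over $(a,b)$ and over $\T\setminus(a,b)$. Since $u=c(t)$ on $(a,b)$, the first piece is the explicit function $\frac{c(t)}{\pi}\log\!\left|\frac{\sin((x-a)/2)}{\sin((x-b)/2)}\right|$, which is real-analytic on $(a,b)$; in the second piece the kernel is analytic in $x$ for $x\in(a,b)$ uniformly in $y\in\T\setminus(a,b)$, so differentiating under the integral sign (legitimate since $u(\cdot,t)\in L^1(\T)$ for a.e.\ $t$) shows that piece is real-analytic on $(a,b)$ too. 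Hence $\mathcal H u(\cdot,t)$, and therefore $P_+u(\cdot,t)$, is real-analytic in $x$ on $(a,b)$ for a.e.\ $t$. \textbf{(ii) Holomorphic continuation.} For such $t$ the boundary values of $W\in H^2(\mathbb{D})$ agree on $\Gamma$ with the trace of a function $g$ holomorphic in a complex neighborhood of $\Gamma$; writing $W$ as the Cauchy integral of its boundary data over $\partial\mathbb{D}$, isolating the part carried by (a compact subarc of) $\Gamma$, replacing the density there by $g$ and deforming that contour slightly outside $\overline{\mathbb{D}}$, one sees that $W(\cdot,t)$ extends to a function holomorphic on a connected open set $\Omega=\mathbb{D}\cup\mathcal N$, with $\mathcal N$ an open complex neighborhood of the open arc $\Gamma$. \textbf{(iii) Conclusion.} Restricted to $(a,b)\times(0,T)$, the equation \eqref{UCP1} reduces to $\mathcal H u_{xx}=-c'(t)$: there $u_x=u_{xx}=uu_x=0$ and $u_t=c'(t)$, the reduction being made rigorous by testing the distributional equation against $\phi\in C_c^\infty((a,b)\times(0,T))$, using $\int_{(a,b)}\phi_x\,dx=\int_{(a,b)}\phi_{xx}\,dx=0$ and the analyticity from (i) to integrate $(\mathcal H u)\phi_{xx}$ by parts. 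Now $(z\partial_z)^2W$ is holomorphic on $\Omega$, and along $\Gamma$ it equals $-\partial_x^2(P_+u)=-P_+u_{xx}=-\frac i2\,\mathcal H u_{xx}=\frac i2\,c'(t)$, a constant in $x$. Since $\Gamma$ is a non-degenerate arc interior to $\Omega$, the identity theorem gives $(z\partial_z)^2W\equiv\frac i2 c'(t)$ on $\Omega$; restricting to $\mathbb{D}$, where $(z\partial_z)^2W(z)=\sum_{k\ge 1}k^2\hat u_k(t)z^k\to 0$ as $z\to0$, forces $c'(t)=0$ and then $\hat u_k(t)=0$ for all $k\ge 1$. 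As $u$ is real, $\hat u_{-k}(t)=\overline{\hat u_k(t)}=0$, and $\hat u_0(t)=0$; hence $u(\cdot,t)=0$ for a.e.\ $t$, i.e.\ $u\equiv0$ a.e.\ on $\T\times(0,T)$.

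The step I expect to be the main obstacle is \textbf{(ii)}: establishing the holomorphic continuation across $\Gamma$ for a function known only to be in $H^2(\mathbb{D})$, hence with merely $L^2$ boundary data, and checking that near $\Gamma$ this continuation genuinely reproduces the $x$-derivatives of the boundary function that are used in (iii). This is classical (Cauchy-integral splitting and contour deformation, or first proving $C^\infty$ regularity up to the open arc and then invoking Schwarz reflection), but the endpoints of $\Gamma$ call for some care, so one works on compact subarcs and lets them exhaust $\Gamma$. Everything else is routine bookkeeping: a Fubini argument putting \eqref{UCP2} in the form "$u(x,t)=c(t)$ for a.e.\ $x\in(a,b)$, for a.e.\ $t$", the distributional derivation of $\mathcal H u_{xx}=-c'(t)$ on $(a,b)$, and tracking the null sets of $t$'s discarded at each stage. (For $\varepsilon>0$ one could alternatively invoke parabolic, indeed analytic, smoothing to make $u(\cdot,t)$ real-analytic in $x$ for $t>0$ and then conclude directly from \eqref{UCP2} and $\hat u_0=0$; the argument above has the advantage of being uniform in $\varepsilon\ge0$.)
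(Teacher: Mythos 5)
Your argument is correct in substance, but it follows a genuinely different route from the paper's. The paper differentiates the localized equation once more in $x$, so that both $v:=u_{xxx}(\cdot,t)$ and ${\mathcal H}v$ vanish on $(a,b)$ for a.e.\ $t$; hence the positive-frequency part $\sum_{k>0}\hat v_k e^{ikx}$ vanishes on $(a,b)$, and the conclusion follows from Lemma \ref{Fourier}, which is proved via the Hardy space ${\mathbf H}^2(U)$ and the boundary uniqueness theorem (a nonzero ${\mathbf H}^2$ function has a.e.\ nonvanishing boundary values), together with an induction on the Sobolev index to accommodate $v\in H^{-3}(\T )$. You instead keep $u$ itself: you prove that ${\mathcal H}u(\cdot,t)$ is real-analytic on $(a,b)$ by splitting the cotangent kernel, continue the Hardy extension $W$ of the positive-frequency part of $u$ holomorphically across the arc, and finish with the identity theorem applied to $(z\partial_z)^2W$. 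The paper's route buys brevity — no analytic continuation and no kernel computation, only the a.e.\ nonvanishing of boundary values on a set of positive measure — at the price of the negative-index induction; your route stays in $L^2$ throughout and replaces the F.\ and M.\ Riesz-type ingredient by the more hands-on continuation lemma, which, as you rightly flag, is the delicate step (Cauchy-integral splitting, outward contour deformation, endpoints handled by exhausting $\Gamma$ with compact subarcs). Two small points should be nailed down, neither of which is a gap in the idea: (1) $c'(t)$ is not a priori defined pointwise for $c\in L^2(0,T)$; either observe that $-{\mathcal H}u_{xx}(\cdot,t)$ is constant in $x$ on $(a,b)$ for a.e.\ $t$ with value $d(t)$ satisfying $|d(t)|\le C\|u(\cdot,t)\|$, run the argument with $d(t)$ in place of $c'(t)$ (so that $c\in H^1(0,T)$ only a posteriori), or take one more $x$-derivative as the paper does so that the constant disappears altogether; (2) the identification of the restriction to $\Gamma$ of the continued $W$ with the real-analytic representative of the boundary data, and of its second $x$-derivative with the time-slice of ${\mathcal H}u_{xx}$, should be stated explicitly — both are routine Fubini-type facts for $L^2(0,T;H^{\sigma}(\T ))$ functions.
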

\noindent
{\em Proof.} From \eqref{UCP2}, we obtain that
$u_{xx}(x,t)=(uu_x)(x,t)=0$ for a.e. $(x,t)\in (a,b)\times (0,T)$. Thus, by using
\eqref{UCP1},
$$
{\mathcal H} u_{xx} =-u_t= -c_t \qquad \text{ in } (a,b)\times (0,T).
$$
Therefore, for almost every $t\in (0,T)$, it holds
\begin{eqnarray}
&& u_{xxx}(\cdot,t)\in H^{-3}(\T),\\
&& u_{xxx}(\cdot,t)=0 \ \text{ in }\ (a,b),\\
&& {\mathcal H}u_{xxx}(\cdot,t)=0 \ \text{ in }\ (a,b).
\end{eqnarray}
Pick a time $t$ as above, and set $v=u_{xxx}(\cdot,t)$. Decompose $v$ as
$$
v(x)=\sum_{k\in \Z} \hat v_k e^{ikx},
$$
the convergence of the Fourier series being in $H^{-3}(\T )$.
Then in $(a,b)$
$$
0=iv - {\mathcal H}v = 2i\sum_{k > 0}\hat v_k e^{ikx}.
$$
Since $v$ is real-valued, we also have that $\hat v_{-k}=\overline{\hat v_k}$
for all $k$.
The following lemma for Fourier series is needed.
\begin{lemma}
\label{Fourier}
Let $s\in \R$ and let $v(x) =\sum_{k\ge 0}\hat v_k e^{ikx}$ be such that
$v\in H^s(\T )$ and $v=0$ in $(a,b)$. Then $v\equiv 0$.
\end{lemma}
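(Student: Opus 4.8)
The plan is to prove Lemma~\ref{Fourier} by reducing it to a classical uniqueness statement for bounded analytic functions on the unit disc (a Hardy-space / F.~and M.~Riesz type argument), exploiting that a one-sided Fourier series is the boundary value of a holomorphic function.

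\medskip

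\noindent\emph{Step 1: Reduction to an analytic function on the disc.} Since $v=\sum_{k\ge 0}\hat v_k e^{ikx}$ with $v\in H^s(\T)$, the coefficients $\hat v_k$ grow at most polynomially in $k$. Hence the power series $F(z):=\sum_{k\ge 0}\hat v_k z^k$ defines a holomorphic function on the open unit disc $\mathbb D=\{z\in\C;\ |z|<1\}$, and for each fixed $r<1$ the function $x\mapsto F(re^{ix})=\sum_{k\ge0}\hat v_k r^k e^{ikx}$ is smooth on $\T$ and converges to $v$ in $H^s(\T)$ (hence in $\mathcal D'(\T)$) as $r\to 1^-$. In particular $F(re^{ix})\to 0$ in $\mathcal D'((a,b))$ as $r\to1^-$, since $v=0$ on $(a,b)$.

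\medskip

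\noindent\emph{Step 2: Improving regularity by differentiation.} The obstacle is that $v$ need only lie in a negative-order Sobolev space, so $F$ need not be in the Hardy space $H^2(\mathbb D)$ and its boundary values need not be an $L^1$ function; the classical F.~and M.~Riesz theorem does not apply directly. I circumvent this as follows. Choose an integer $N$ with $N+s>1/2$ and set $w(x):=\sum_{k\ge 0}\frac{\hat v_k}{(k+1)^N}e^{ikx}$. Then $w\in H^{s+N}(\T)\subset L^2(\T)$, and $w$ is again a one-sided series. Moreover $w$ vanishes on $(a,b)$: indeed, on the disc the corresponding holomorphic function is $G(z)=\sum_{k\ge0}\frac{\hat v_k}{(k+1)^N}z^k$, which can be recovered from $F$ by the integral operator $G(z)=\frac{1}{(N-1)!}\int_0^1 F(tz)\,(\log(1/t))^{N-1}\,\frac{dt}{t}$ (the standard formula expressing division of Taylor coefficients by $(k+1)^N$ as iterated integration), so $G$ inherits from $F$ the property that its radial boundary value vanishes on $(a,b)$. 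Thus it suffices to treat the case $v\in L^2(\T)$, i.e. $F\in H^2(\mathbb D)$; once we show $w\equiv0$, all $\hat v_k=0$ and hence $v\equiv0$.

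\medskip

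\noindent\emph{Step 3: Conclusion via the F.~and M.~Riesz theorem.} Now $F\in H^2(\mathbb D)$ has nontangential boundary value $v\in L^2(\T)$ with $v=0$ on the arc $(a,b)$ of positive measure. A classical theorem (the F.~and M.~Riesz theorem, or equivalently the fact that a nonzero $H^p$ function cannot vanish on a set of positive measure on $\T$, via $\int_{\T}\log|v|\,dx>-\infty$) forces $v\equiv0$ on $\T$, whence $F\equiv0$ and all $\hat v_k=0$. This completes the proof.

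\medskip

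\noindent I expect Step~2 — transferring the vanishing of the distributional boundary value through the antiderivative-type operator $G=J^N F$ while keeping track of which sense the boundary limit is taken in — to be the only genuinely delicate point; Steps~1 and~3 are standard once the regularity has been upgraded. An alternative that avoids Hardy-space machinery entirely is to argue directly on $\T$: extend $v$ (viewed as an analytic-type symbol) and use that $e^{-i\lambda x}v(x)$ for $\lambda>0$ can be made to agree with the restriction to $\T$ of an entire function of exponential type bounded on the real axis, then invoke a quasianalyticity/Paley--Wiener argument; but the Hardy-space route above is shorter and cleaner.
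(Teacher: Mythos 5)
Your overall strategy --- upgrade the regularity of the one-sided series until it lies in $L^2(\T )$ and then invoke the a.e.\ non-vanishing of boundary values of a nonzero Hardy-space function --- is exactly the strategy of the paper, whose base case ($v\in L^2$) is your Step~3 verbatim. The genuine gap is in Step~2: the claim that $w$ vanishes on $(a,b)$ is not only unproved, it is not what your mechanism yields. Your multiplier $\hat v_k\mapsto \hat v_k/(k+1)^N$ is the inverse of $(1-i\partial _x)^N$, so the most that the relation between $w$ and $v$ can give you is that $(1-i\partial _x)^N w=v=0$ on $(a,b)$, i.e.\ $w(x)=e^{-ix}P(x)$ on $(a,b)$ for some polynomial $P$ of degree $<N$ --- not $w=0$. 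Put differently, your operator is convolution with the kernel $K(x)=\sum_{k\ge 0}(k+1)^{-N}e^{ikx}$, which is not supported at the origin and therefore has no reason to preserve local vanishing; and in your integral representation the limit as $r\to 1^-$ of $\int_0^1F(tre^{i\theta})(\log(1/t))^{N-1}\,dt$ involves the values of $F$ along the whole radius $\{te^{i\theta}\}$, which are determined by the boundary data on all of $\T$, not just by $v$ near $e^{i\theta}$. (As a side remark, the formula as written, with $dt/t$, divides the coefficients by $k^N$ and diverges at $k=0$; the kernel for $(k+1)^{-N}$ is $(\log(1/t))^{N-1}\,dt$.) The obstruction $e^{-ix}P(x)$ cannot be subtracted off while staying in the class of series supported in $k\ge 0$: already for $N=1$ it is a multiple of the $k=-1$ mode. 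So the reduction to the $L^2$ case collapses, and since the only way to see that $P=0$ is to already know the lemma, the argument is circular at this point.

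The paper's proof shows exactly how to repair this: instead of $\hat w_k=\hat v_k/(k+1)$ it uses the \emph{shifted} antiderivative $\hat w_k=\hat v_{k-1}/(ik)$ for $k>0$, so that $w_x=e^{ix}v$ vanishes on $(a,b)$ and hence $w\equiv C$ there; the integration constant now lives in the $k=0$ mode, which is admissible in a one-sided series, so $\tilde w=w-C$ is again of the form $\sum_{k\ge 0}$, lies in $H^{s+1}(\T )$, and vanishes on $(a,b)$. One gains one derivative per step and concludes by induction on $p$ after reducing to $s=-p$, $p\in\N$. If you replace your one-shot multiplier by $N$ iterations of this shifted operator (absorbing one constant at each stage), your Steps~1--3 become precisely the paper's proof; as it stands, Step~2 is a gap that invalidates the argument for $s<0$.
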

\noindent
{\em Proof of Lemma \ref{Fourier}.} It is clearly sufficient to prove
the property for $s=-p$, where $p\in \N$. Let us proceed by induction on $p$.
Assume first that $p=0$. Then
\begin{equation}
\label{L2}
\sum_{k\ge 0}|\hat v_k|^2 <\infty.
\end{equation}
Introduce the set $U=\{ z\in \C ;\ |z|<1 \}$ and
the Hardy space (see e.g. \cite{rudin})
$$
{\mathbf H}^2 (U) =\{ f : U\to \C;\ f  \text{ is holomorphic in $U$ and }
\limsup_{r\to 1^-} \int_{-\pi}^\pi |f(re^{i\theta})|^2 d\theta <\infty \}
$$
Let $f(z)=\sum_{k\ge 0}\hat v_k z^k$. Then, by \cite[Thm 17.10]{rudin} and
\eqref{L2}, we have that $f\in {\mathbf H}^2(U)$. On the other hand, by
\cite[Thm 17.10 and Thm 17.18]{rudin}, it holds that
\begin{eqnarray}
&&f^* ( e^{i\theta} ): =\lim_{r\to 1^-} f( re^{i\theta} )
\text{ exists for a.e. } \theta \in (0,2\pi); \label{F1}\\
&& f^*(e^{i\theta}) =\sum_{k\ge 0} \hat v_k e^{ik\theta } = v(\theta )
\qquad \text{ in } L^2(\T ); \label{F1bis}\\
&&\text{If } f\not\equiv 0,\ \ \text{then} \ f^*(e^{i\theta })\ne 0 \ \ \text{for a.e. } \theta\in (0,2\pi ).\label{F2}
\end{eqnarray}
Since
$$
f^*(e^{i\theta })=v(\theta )=0\qquad \text{ for a.e. } \theta \in (a,b),
$$
it follows from \eqref{F2} that $f\equiv 0$. Therefore $\hat v_k=0$ for all
$k\ge 0$, hence $v\equiv 0$. This gives the result for $p=0$.
Assume now that the result has been proved for $s= - p$ for some $p\in \N$, and pick any
$v\in H^{-p-1}(\T )$, decomposed as $v(x)=\sum_{k\ge 0}\hat v_k e^{ikx}$, and such
that  $v\equiv 0$ in $(a,b)$.
Let $w(x)=\sum_{k>0} \frac{\hat v_{k-1}}{ik} e^{ikx}$. Then
$w\in H^{-p}(\T )$ and
$$
w_x=\sum_{k>0} \hat v_{k-1}e ^{ikx} = e^{ix}v,
$$
so $w_x=0$ on $(a,b)$ and we have, for some constant $C\in\C$,
\begin{equation}
\label{constant}
w(x)=C\ \ \text{  on } \  (a,b).
\end{equation}
 Introducing
the function $\tilde w(x)=w(x)-C$, we infer from
\eqref{constant} and the induction hypothesis that $\tilde w\equiv 0$ on
$\T$, which yields $v\equiv 0$ on $\T$. This completes the proof of
Lemma \ref{Fourier}.\qed

With Lemma \ref{Fourier} we infer that for a.e. $t\in (0,T)$, $u_{xxx}(.,t)=0$ in $\T$, hence with \eqref{UCP2}
$u(x,t)=c(t)$ a.e. in $\T\times (0,T)$. From \eqref{UCP1} we infer that $c_t=0$, which, combined with the fact that  $u\in L^2(0,T;H^0_0(\T))$,
gives that $u(x,t)=0$ a.e. in $\T\times (0,T)$. The proof of Proposition \ref{unique_continuation} is complete.\qed

We are now in a position to state a stabilization result for the $\varepsilon$-BO equation. We stress that the
decay rate does not depend on  $\varepsilon$.
\begin{theorem}
\label{thmstab1}
Let $R>0$.
There exist some numbers $\lambda >0$ and $C>0$ such that
for any $\varepsilon \in (0,1]$ and any $u_0\in H^0_0 (\T )$ with $\|u_0\|\le R$, the solution  $u$ of \eqref{BOe} satisfies
$$
\|u(t)\| \le Ce^{-\lambda t}\|u_0\|\qquad \forall t\ge 0.
$$
\end{theorem}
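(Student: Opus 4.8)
The plan is to establish an observability-type inequality by a compactness--uniqueness argument, the two non-routine ingredients being the $\varepsilon$-uniform smoothing bound of Proposition~\ref{smoothing} (which provides compactness) and the unique continuation property of Proposition~\ref{unique_continuation} (which identifies the limit). \emph{Step 1 (reduction).} Since $t\mapsto\|u(t)\|$ is nonincreasing by \eqref{identitybis}, it suffices to produce a time $T>0$ and a constant $\beta=\beta(R)\in(0,1)$ such that $\|u(T)\|\le\beta\|u_0\|$ for every $\varepsilon\in(0,1]$ and every solution $u$ of \eqref{BOe} with $\|u_0\|\le R$. Indeed $\|u(T)\|\le\beta R\le R$, so this estimate iterates to $\|u(nT)\|\le\beta^n\|u_0\|$, and for $t\in[nT,(n+1)T]$ one gets $\|u(t)\|\le\|u(nT)\|\le\beta^{-1}e^{-\lambda t}\|u_0\|$ with $\lambda=-(\ln\beta)/T>0$, both constants depending only on $R$.

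\emph{Step 2 (contradiction hypothesis and rescaling).} Fix $T=1$ and suppose no such $\beta$ exists. Then for each $n\ge2$ there are $\varepsilon_n\in(0,1]$ and a solution $u^n$ of \eqref{BOe} with $\varepsilon=\varepsilon_n$ such that $\|u^n(0)\|\le R$ and $\|u^n(T)\|>(1-\frac{1}{n})\|u^n(0)\|$. Necessarily $\alpha_n:=\|u^n(0)\|>0$, so $v^n:=u^n/\alpha_n$ satisfies $\|v^n(0)\|=1$ and $v^n_t+(\mathcal H-\varepsilon_n)v^n_{xx}+\alpha_n v^nv^n_x=-G(D(Gv^n))$. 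Writing \eqref{identitybis} for $v^n$ — valid with the coefficient $\alpha_n$ in front of $v^nv^n_x$, as noted after Proposition~\ref{smoothing} — gives $\frac{1}{2}\|v^n(T)\|^2+\varepsilon_n\int_0^T\|v^n_x\|^2\,dt+\int_0^T\|D^{\frac{1}{2}}(Gv^n)\|^2\,dt=\frac{1}{2}$, and since $(1-\frac{1}{n})^2<\|v^n(T)\|^2\le\|v^n(0)\|^2=1$, we deduce $\varepsilon_n\int_0^T\|v^n_x\|^2\,dt\to0$, $\int_0^T\|D^{\frac{1}{2}}(Gv^n)\|^2\,dt\to0$, and $\|v^n(T)\|\to1$. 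The crucial observation is that the rescaling also forces the \emph{viscous} part of the dissipation to vanish, so the parabolic term will drop out of the limit even if $\varepsilon_n\not\to0$.

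\emph{Step 3 (compactness and passage to the limit).} By Proposition~\ref{smoothing}, applied with ball radius $\|v^n(0)\|=1$ and $\alpha=\alpha_n\le R$, the sequence $(v^n)$ is bounded in $L^2(0,T;H^{\frac{1}{2}}_0(\T))$ uniformly in $n$; it is also bounded in $L^\infty(0,T;L^2(\T))$ since $\|v^n(t)\|\le1$. From the equation — using $H^{\frac{1}{2}}(\T)\hookrightarrow L^4(\T)$ for the nonlinear term and the bound above for the viscous term — $(v^n_t)$ is bounded in $L^1(0,T;H^{-\frac{3}{2}}(\T))$, so an Aubin--Lions type argument gives, along a subsequence, $v^n\to v$ strongly in $L^2(0,T;L^2)$, weakly in $L^2(0,T;H^{\frac{1}{2}})$, and weak-$*$ in $L^\infty(0,T;L^2)$, with $\alpha_n\to\alpha\in[0,R]$. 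Passing to the limit in $\mathcal D'(\T\times(0,T))$: $\varepsilon_n v^n_{xx}=\partial_x(\sqrt{\varepsilon_n}\cdot\sqrt{\varepsilon_n}\,v^n_x)\to0$, $\alpha_nv^nv^n_x=\frac{\alpha_n}{2}\partial_x((v^n)^2)\to\alpha vv_x$, and $G(D(Gv^n))\to G(D(Gv))$; moreover $D^{\frac{1}{2}}(Gv^n)\to0$ and $Gv^n\to Gv$ in $L^2(0,T;L^2)$, so $D^{\frac{1}{2}}(Gv)=0$, and since $\int_\T Gv\,dx=0$ (because $\int_\T a\,dx=1$) this forces $Gv\equiv0$, i.e. $v(x,t)=c(t):=\int_\T a(y)v(y,t)\,dy$ for a.e. $(x,t)\in\omega\times(0,T)$ with $c\in L^2(0,T)$ and $\omega=\{\,x\in\T;\ a(x)>0\,\}$. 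Hence $v$ satisfies $v_t+\mathcal Hv_{xx}+\alpha vv_x=0$ in $\T\times(0,T)$ and $v=c(t)$ on any subinterval of $\omega$, so Proposition~\ref{unique_continuation} (with $\varepsilon=0$) yields $v\equiv0$ a.e.

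\emph{Step 4 (conclusion and main difficulty).} On one hand $v^n\to v=0$ in $L^2(0,T;L^2)$ gives $\int_0^T\|v^n(t)\|^2\,dt\to0$; on the other hand the monotonicity of $t\mapsto\|v^n(t)\|$ together with $\|v^n(T)\|\to1$ gives $\int_0^T\|v^n(t)\|^2\,dt\ge T\|v^n(T)\|^2\to T>0$, a contradiction. This establishes Step~1, hence the theorem. The delicate point is not any single estimate — the smoothing and unique continuation inputs are already available — but the bookkeeping that keeps everything uniform in $\varepsilon$: one must observe the \emph{entire} dissipated energy $\varepsilon\int\|v^n_x\|^2+\int\|D^{\frac{1}{2}}(Gv^n)\|^2$ rather than only the damping contribution, which is precisely what makes the artificial viscosity harmless in the limit and the decay rate independent of $\varepsilon$.
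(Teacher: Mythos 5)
Your proof is correct and follows essentially the same compactness--uniqueness scheme as the paper: normalization $v^n=u^n/\|u_0^n\|$, the $\varepsilon$-uniform $H^{\frac{1}{2}}$ bound of Proposition \ref{smoothing} plus Aubin--Lions for compactness, and Proposition \ref{unique_continuation} to show the limit vanishes. The only (harmless) variations are that you contradict the one-step decay estimate directly rather than the observability inequality \eqref{obs} --- the two are equivalent via \eqref{identitybis} --- and that you note the viscous dissipation vanishes under the rescaling so the limit equation has no viscosity, whereas the paper keeps $\varepsilon=\lim\varepsilon^n$ in the limit and relies on its unique continuation result holding for all $\varepsilon\ge 0$.
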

\noindent {\em Proof.}  Note that $\|u(t)\|$ is nonincreasing by \eqref{identitybis}, so that
the exponential decay is ensured if $\|u((n+1)T)\|\le \kappa \|u(nT)\|$ for some $\kappa <1$.
To prove the theorem, it is thus sufficient (with \eqref{identitybis}) to establish the following observability
inequality: for any $T>0$ and any $R>0$ there exists some constant $C(T,R) >0$ such that for
any $\varepsilon \in (0,1]$ and any $u_0\in H^0_0(\T ) $ with $\|u_0\|\le R$, it holds
\begin{equation}
\label{obs}
\|u_0\|^2 \le C\left( \varepsilon   \!\! \int_0^T \|u_x(t )\|^2 dt + \int_0^T \|D^{\frac{1}{2}}(Gu)\|^2 dt\right) ,
\end{equation}
where $u$ denotes the solution of \eqref{BOe}. Fix any $T>0$ and any $R>0$, and assume
that \eqref{obs} fails. Then there exist a sequence $ (u_0^n )$ in $H^0_0(\T )$ and a sequence $ (\varepsilon ^n )$ in $(0,1]$ such
that for each $n$ we have $\|u_0^n\|\le R$, and
$$
\|u^n_0\|^2 > n  \left( \varepsilon ^n \!\! \int_0^T \|u^n_x (t)\|^2 \, dt +  \int_0^T \|D^{\frac{1}{2}}(G \,u^n)\|^2dt\right) .
$$
Let $\alpha ^n=\|u_0^n\|\in (0,R]$. Extracting a sequence if needed, we may assume that  $\alpha ^n\to \alpha \in [0,R]$ and
$\varepsilon ^n\to \varepsilon \in [0,1]$. Let $v^n=u^n/\alpha ^n$.  Then $v^n$ solves
\begin{equation}
\label{BO2}
v^n_t+ ({\mathcal H}-\varepsilon ^n)v^n_{xx} + \alpha ^n v^n v^n_x = - G (D ( G v^n)), \qquad v^n(0)=v^n_0
\end{equation}
with $v^n_0\in H^0_0(\T ) $ and $\|v^n_0\|=1$.
Again, we have that
\begin{eqnarray}
&&\frac{1}{2} \|v^n(t)\|^2 + \varepsilon ^n \int_0^t \|v^n_x \|^2 d\tau +  \int_0^t \|D^{\frac{1}{2}}(G v^n)\|^2 d\tau
=\frac{1}{2} \|v_0^n\|^2\qquad \forall t>0,\label{X1}\\
&&1=\|v_0^n\|^2 > n
\left( \varepsilon ^n \int_0^T \|v^n_x(t)\|^2 dt +  \int_0^T \|D^{\frac{1}{2}}(G v^n)\|^2\, dt \right) .
\label{X2}
\end{eqnarray}
We infer from Proposition \ref{smoothing} that
\begin{equation}
\label{H12bis}
\int_0^T \|D^\frac{1}{2} v^n\|^2dt \le C.
\end{equation}
This yields
$$
\|G( D(Gv^n))\|_{ L^2(0,T; H^{-\frac{1}{2}}   (\T )) } +
\| ( {\mathcal H}-\varepsilon )v^n_{xx}\|_{L^2(0,T;H^{-\frac{3}{2}} (\T )) } \le C.
$$
On the other hand, for any $\delta >0$
$$
\|v^nv^n_x\|_{H^{-\frac{3}{2}-\delta}(\T )}
\le C \|(v^n)^2\|_{ H^{-\frac{1}{2}-\delta } (\T )}
\le C \|(v^n)^2\|_{L^1(\T )}
\le C \|v^n\|^2 \le C
$$
thus
$$
\|\alpha ^n v^n v^n_x\|_{L^2(0,T;H^{-\frac{3}{2}-\delta}(\T ))} \le C.
$$
It follows that $ (v^n_t  )$ is bounded in
$L^2(0,T; H^{-\frac{3}{2} -\delta }(\T ))$. Combined with \eqref{H12bis} and Aubin-Lions'  lemma, this gives that
for a subsequence still denoted by $ (v^n )$, we have 
\begin{eqnarray*}
&&v^n\to v\qquad \text{ in } L^2(0,T;H^\alpha (\T )) \qquad\forall \alpha <\frac{1}{2}, \\
&&v^n\to v\qquad \text{ in } L^2(0,T;H^{\frac{1}{2}} (\T ))\text{ weak} , \ \\
&&v^n\to v\qquad \text{ in } L^\infty (0,T;L^2 (\T )) \text{ weak}* \
\end{eqnarray*}
for some function
$
v\in L^2(0,T;H_0^\frac{1}{2} (\T ))
\cap L^\infty (0,T;L^2(\T ))$.
In particular,
$$
(v^n)^2\to v^2 \qquad \text{ in } L^1(\T \times (0,T)).
$$
Letting $n\to \infty$ in \eqref{X2}, we obtain that
$$
\int_0^T\|D^{\frac{1}{2}} (Gv)\|^2 dt =0,
$$
hence $Gv=0$ a.e. on $\T \times (0,T)$. Recall that  $\omega=\{ x\in \T; \   a(x)>0 \}$. Then
\[
v(x,t)=\int_{\T}a(y)v(y,t)\, dy= : c(t)\qquad \text{ for a.e. } (x,t)\in \omega\times (0,T).
\]
Note that $c\in L^\infty (0,T)$.
Taking the limit in \eqref{BO2} gives
\begin{equation*}
\left\{
\begin{array}{ll}
v_t+({\mathcal H}-\varepsilon ) v_{xx}+\alpha vv_x=0,\qquad  &\text{in } \T \times (0,T),\\
v(x,t) =c(t)\qquad &\text{for a.e. } (x,t) \in \omega  \times (0,T).
\end{array}
\right.
\end{equation*}
It follows from Proposition \ref{unique_continuation} that $v\equiv 0$.
Thus, extracting a subsequence still denoted by $ (v^n )$, we have that $v^n(\cdot ,t)\to 0$ in $L^2(\T)$ for a.e. $t\in (0,T)$.
Using \eqref{X1}-\eqref{X2}, we infer that $v^n_0\to 0$ in $L^2(\T )$. This contradicts the fact that $\|v^n_0\|=1$ for all $n$.
\qed

We are now in a position to define the weak solutions of \eqref{BOs} obtained by the method of
vanishing viscosity, and to state the corresponding exponential stability property.
\begin{definition}
\label{defiweak}
For $u_0\in H^0_0(\T )$, we call a {\em weak solution of \eqref{BOs} in the sense of vanishing viscosity}
any function $u\in C_w(\R ^+, H^0_0(\T ))$ with $u\in L^2(0,T, H^{\frac{1}{2}}(\T ))$ for all $T>0$  which solves
\eqref{BOs} (in the distributional sense) and such that for some sequence $\varepsilon ^n \searrow 0$ we have
for all $T>0$
\begin{eqnarray*}
u^n &\to& u \qquad \text{ in } L^\infty (0,T,H^0_0 (\T ))\ \text{weak}\, *,\\
u^n &\to& u  \qquad \text{ in } L^2(0,T,H^{\frac{1}{2}} _0(\T ))\ \text{weak}
\end{eqnarray*}
where $u^n$ solves \eqref{BOe} for $\varepsilon =\varepsilon ^n$.
\end{definition}
The main result in this section is the following
\begin{theorem}
\label{Thmstab}
For any $u_0\in H^0_0(\T )$ there exists (at least) one weak solution of \eqref{BOs} in the sense of vanishing viscosity. On the other hand, for all
$R>0$ there exist some positive constants $\lambda =\lambda (R)$ and $C=C(R)$ such that for any weak solution $u(t)$ of \eqref{BOs} in the sense of
vanishing viscosity, it holds
\begin{equation}
\label{decay}
\|u(t)\| \le Ce^{-\lambda t} \|u_0\| \qquad \forall t\ge 0
\end{equation}
whenever $\|u_0\|\le R$.
\end{theorem}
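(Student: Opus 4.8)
The plan is to obtain existence of a weak solution by a standard vanishing--viscosity/compactness argument built on the uniform--in--$\varepsilon$ bounds already at hand, and to obtain the decay by transporting, through lower semicontinuity, the uniform--in--$\varepsilon$ exponential decay of Theorem \ref{thmstab1}.

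\emph{Existence.} Fix $u_0\in H^0_0(\T)$ and, for $\varepsilon\in(0,1]$, let $u^\varepsilon$ be the global solution of \eqref{BOe} furnished by Theorem \ref{GWP}. The energy identity \eqref{identitybis} gives, uniformly in $\varepsilon$, $\|u^\varepsilon(t)\|\le\|u_0\|$, a bound for $\sqrt\varepsilon\,u^\varepsilon_x$ in $L^2(0,T,L^2(\T))$, and a bound for $Gu^\varepsilon$ in $L^2(0,T,H^{1/2}(\T))$; Proposition \ref{smoothing} (applied with $\alpha=1$, $R=\|u_0\|$, using the regularity \eqref{parabolic}) adds a bound for $u^\varepsilon$ in $L^2(0,T,H^{1/2}(\T))$ that is also uniform in $\varepsilon$. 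Writing $u^\varepsilon u^\varepsilon_x=\tfrac12((u^\varepsilon)^2)_x$ and estimating $(u^\varepsilon)^2$ in $L^\infty(0,T,L^1(\T))\hookrightarrow L^\infty(0,T,H^{-\frac12-\delta}(\T))$, one reads off from \eqref{BOe} that $u^\varepsilon_t$ is bounded in $L^2(0,T,H^{-\frac32-\delta}(\T))$, uniformly in $\varepsilon$. Aubin--Lions together with a diagonal extraction over $T=1,2,\dots$ then yields a sequence $\varepsilon^n\searrow0$ and a function $u$ with $u^n:=u^{\varepsilon^n}\to u$ strongly in $L^2(0,T,L^2(\T))$, weakly in $L^2(0,T,H^{1/2}_0(\T))$, and weak-$*$ in $L^\infty(0,T,H^0_0(\T))$, for every $T>0$. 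Since $(u^n)^2\to u^2$ in $L^1(\T\times(0,T))$ and $\varepsilon^n u^n_{xx}\to0$ in $\mathcal D'$, one passes to the limit in \eqref{BOe}: the limit $u$ solves \eqref{BOs} in the distributional sense, belongs to $L^\infty(\R^+,H^0_0(\T))\cap L^2_{loc}(\R^+,H^{1/2}_0(\T))$, and, its time derivative lying in $L^2_{loc}(\R^+,H^{-\frac32-\delta}(\T))$, to $C_w(\R^+,H^0_0(\T))$. Hence $u$ is a weak solution of \eqref{BOs} in the sense of vanishing viscosity.

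\emph{Exponential decay.} Let $u$ be an arbitrary weak solution of \eqref{BOs} in the sense of vanishing viscosity with $\|u_0\|\le R$, and let $\varepsilon^n\searrow0$ and $u^n$ (solving \eqref{BOe} for $\varepsilon=\varepsilon^n$) be as in Definition \ref{defiweak}. By Theorem \ref{thmstab1} there are constants $C=C(R)$ and $\lambda=\lambda(R)$, independent of $n$, with $\|u^n(t)\|\le Ce^{-\lambda t}\|u_0\|$ for all $t\ge0$. The sequence $(u^n)$ enjoys the uniform bounds above (each $u^n$ has the regularity required for Proposition \ref{smoothing}), so, exactly as in the existence part, a subsequence converges strongly in $L^2(0,T,L^2(\T))$ for every $T$; its limit is necessarily $u$. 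Passing to a further subsequence, $u^n(t)\to u(t)$ in $L^2(\T)$ for a.e.\ $t\ge0$, whence $\|u(t)\|\le\liminf_n\|u^n(t)\|\le Ce^{-\lambda t}\|u_0\|$ for a.e.\ $t\ge0$. Finally, since $u\in C_w(\R^+,H^0_0(\T))$, the function $t\mapsto\|u(t)\|$ is lower semicontinuous; given $t\ge0$, pick a sequence $t_k\to t$ of times at which the previous bound holds, to obtain $\|u(t)\|\le\liminf_k\|u(t_k)\|\le Ce^{-\lambda t}\|u_0\|$, which is \eqref{decay}.

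\emph{Main difficulty.} The quantitative heart of the argument has already been supplied by Proposition \ref{smoothing} (the smoothing bound uniform in $\varepsilon$) and Theorem \ref{thmstab1} (the decay uniform in $\varepsilon$); what remains is of a soft nature. The only genuine care needed is that $u^n\to u$ only weakly, so the decay rate reaches $u$ merely through weak lower semicontinuity of the $L^2$ norm, and one must pass from an a.e.-in-$t$ estimate to one valid for every $t$, which is precisely where weak continuity in time of $u$ is used. We do not address --- and do not need --- the uniqueness of the weak solution.
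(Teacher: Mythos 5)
Your proof is correct and follows essentially the same route as the paper: uniform-in-$\varepsilon$ bounds from the energy identity and Proposition \ref{smoothing}, Aubin--Lions plus a diagonal extraction for existence, and Theorem \ref{thmstab1} transported to the limit for the decay. You are in fact slightly more careful than the paper at the last step, where the passage from the a.e.-in-$t$ bound to all $t\ge 0$ via weak lower semicontinuity and $u\in C_w(\R^+,H^0_0(\T))$ is made explicit rather than left implicit in ``letting $n\to\infty$.''
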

\noindent
{\em Proof.} Pick $R>0$  and $u_0\in H^0_0(\T ) $ with $\|u_0\|\le R$. Pick any sequence $\varepsilon ^n\searrow 0$ and let $u^n(t)$ denote
the solution of
\begin{equation}
\label{BOn}
u^n_t + ({\mathcal H}-\varepsilon ^n) u^n_{xx} +u^n u^n_x = -G(DG u^n),\quad u^n(0)=u_0.
\end{equation}
It follows from \eqref{identitybis} and \eqref{H12} that
\begin{eqnarray*}
&&\|u^n\|_{L^\infty (0,T,H^0_0(\T ))} \le R,\\
&&\|u^n\|_{L^2(0,T,H_0^{\frac{1}{2}} ( \T ))} \le C(T,R).
\end{eqnarray*}
Using a diagonal process, we obtain that for a subsequence, still denoted by $( u^n )$, we have for all $T>0$
\begin{eqnarray}
&&u^n\to u \qquad \text{ in } L^\infty(0,T,H^0_0(\T )) \ \text{ weak}*,\label{H99}\\
&&u^n\to u \qquad \text{ in } L^2(0,T,H_0^{\frac{1}{2}} (\T )) \ \text{weak} \label{H100}
\end{eqnarray}
for some function $u\in L^\infty (\R ^+ ,H^0_0(\T ))\cap L^2_{loc}(\R ^+,H^{\frac{1}{2}}_0(\T ))$. The same argument as in the proof of
Theorem \ref{thmstab1} shows that $\{ u^n_t \}$ is bounded in
$L^2(0,T; H^{-\frac{3}{2} -\delta }(\T ))$ for all $\delta >0$. Combined with \eqref{H99}-\eqref{H100} and Aubin-Lions'  lemma, this shows that
\[u^n\to u \ \text{ in }\  L^2(\T \times (0,T)) \text { and in } C([0,T],H_0^{-\delta }(\T ))\]
 for all $T>0$ and all $\delta >0$.  On the other hand, $u\in C([0,T],H^{-\delta}(\T ))$ for all $T>0$ and all $\delta >0$, which, combined to
 \eqref{H99}, yields $u\in C_w(\R ^+,H^0_0(\T ))$ (the space of weakly continuous functions from $\R ^+$ to $H^0_0(\T )$).
 By letting $n\to \infty$ in \eqref{BOn}, we see that $u$ solves \eqref{BOs}. Thus $u$ is a weak solution of \eqref{BOs} in the sense of
 vanishing viscosity. On the other hand, from Theorem \ref{thmstab1} we have that
 \[
 \|u^n(t)\| \le C e^{-\lambda t}\|u_0\| ,\quad \forall t\ge 0, \ \forall n\ge 0.
 \]
 where $C=C(R)$, $\lambda = \lambda (R)$. Letting $n\to \infty$ in the above estimate yields \eqref{decay}. Note also that
$\|u(t)\|\le \|u_0\|$ for all $t\ge 0$, since the same estimate holds for the $u^n$'s and $u\in C_w (\R ^+ ,H^0_0(\T ))$.\qed
\subsection{Local stabilization in $H^s_0(\T )$}
\null ~\\
\subsubsection{Main results}
Let again $a$ and $G$ be as in \eqref{defa} and \eqref{defG}, respectively.
For $s\ge 0$ and $T>0$, let
\begin{equation}
\label{R2}
Z_{s,T}=C([0,T],H^s_0(\T )) \cap L^2(0,T, H_0^{s+\frac{1}{2}} (\T ))
\end{equation}
be endowed with the norm
\[
\| v\|_{Z_{s,T}} =\|v\|_{L^\infty(0,T,H^s(\T ))} + \|v\|_{L^2(0,T,H^{s+\frac{1}{2}}(\T ))} \cdot
\]

We are concerned here with the stability properties of the BO equation
with localized damping \eqref{BOs} in the space $H^s_0(\T )$ for $s>0$.  Our first aim is to prove the local well-posedness of \eqref{BOs} in
$H^s_0(\T )$ for $s>1/2$.
\begin{theorem}
\label{thmA}
Let $s\in (\frac{1}{2},2]$. Then there exists $\rho >0$ such that for any $u_0\in H^s_0(\T )$ with $\|u_0\|_s <\rho $, there exists some time $T>0$
such that \eqref{BOs} admits a unique solution in the space
$Z_{s,T}$.
\end{theorem}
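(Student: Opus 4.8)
The plan is to solve \eqref{BOs} by the contraction mapping principle in the space $Z_{s,T}$ of \eqref{R2}, the decisive point being that the \emph{damped} linear flow possesses a Kato-type smoothing effect which compensates the derivative loss coming both from $uu_x$ and from the (first order) damping $G(D(G\cdot))$. Write $\mathcal Au=\mathcal Hu_{xx}+G(D(Gu))$, so that \eqref{BOs} reads $u_t+\mathcal Au=-\frac12\p_x(u^2)$, $u(0)=u_0$. Since $-\mathcal A$ is dissipative on $H^0_0(\T)$ (because $\big(G(D(Gu)),u\big)=\|D^{1/2}(Gu)\|^2\ge0$ and $\big(\mathcal Hu_{xx},u\big)=0$ for real $u$) and $G(D(G\cdot))$ is $\mathcal H\p_x^2$-bounded with relative bound $0$, the Lumer--Phillips theorem yields a contraction $C_0$-semigroup $\big(W(t)\big)_{t\ge0}=\big(e^{-t\mathcal A}\big)_{t\ge0}$ on $H^0_0(\T)$ with domain $H^2_0(\T)$. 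Its boundedness on $H^s_0(\T)$ for $0\le s\le2$ follows from the estimate $-\big(G(D(Gu)),u\big)_s\le C\|u\|^2_s-\|D^{1/2}(Gu)\|_s^2$ (valid for every $s\in\R$, by the argument in the proof of Lemma \ref{linearestim}), the skew-adjointness of $\mathcal H\p_x^2$ on real functions, and Gronwall, giving $\|W(t)u_0\|_s\le e^{Ct}\|u_0\|_s$.

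The core linear estimate is the following smoothing bound for the inhomogeneous problem $u_t+\mathcal Au=f$, $u(0)=u_0$: for $0\le s\le2$ and every $T$ in a bounded range,
\[
\|u\|_{Z_{s,T}}\le C(T)\big(\|u_0\|_s+\|f\|_{L^2(0,T;H^{s-1/2}(\T))}\big).
\]
To prove it I would argue as in Proposition \ref{smoothing} applied to $(1-\p_x^2)^{s/2}u$: the energy identity $\frac12\|u(T)\|_s^2+\int_0^T\|D^{1/2}(Gu)\|_s^2\,dt\le\frac12\|u_0\|_s^2+\int_0^T|(f,u)_s|\,dt$ provides a localized gain near $\{a>0\}$, and commuting $L=\p_t+\mathcal H\p_x^2$ with multiplication by a suitable $\varphi\in C^\infty(\T)$, then controlling the commutators $[\mathcal H,\varphi]\p_x^2$, $[\mathcal H,\p_x\varphi]\p_x$, $[(1-\p_x^2)^{s/2},G(D(G\cdot))]$, $\dots$ by Lemma \ref{commutator}, Remark \ref{rmk2.6} and the smoothness of $a$ and $\varphi$, propagates the gain to all of $\T$. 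The only new term is $(f,\varphi u)_{L^2_{t,x}}$ at the $H^s$ level, bounded by $\|f\|_{L^2(0,T;H^{s-1/2})}\|u\|_{L^2(0,T;H^{s+1/2})}$ and absorbed after choosing the free parameter $\delta$ small and $T$ small, exactly as the term $\delta\int\|D^{1/2}v\|^2$ is absorbed in the proof of Proposition \ref{smoothing}. Taking $u_0\in H^2_0(\T)$ and $f\in C([0,T];H^2_0(\T))$ legitimizes all integrations by parts, and the general case follows by density.

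With this in hand the fixed point is routine. For $s\in(\tfrac12,2]$ one has $H^s(\T)\hookrightarrow L^\infty(\T)$ and $H^{s+1/2}(\T)$ is an algebra, so by the fractional Leibniz rule $\|\p_x(v^2)\|_{H^{s-1/2}}\le\|v^2\|_{H^{s+1/2}}\le C\|v\|_{L^\infty}\|v\|_{H^{s+1/2}}\le C\|v\|_{H^s}\|v\|_{H^{s+1/2}}$, whence
\[
\|\p_x(v^2)\|_{L^2(0,T;H^{s-1/2})}\le C\|v\|_{L^\infty(0,T;H^s)}\|v\|_{L^2(0,T;H^{s+1/2})}\le C\|v\|_{Z_{s,T}}^2
\]
and similarly $\|\p_x(v^2-w^2)\|_{L^2(0,T;H^{s-1/2})}\le C(\|v\|_{Z_{s,T}}+\|w\|_{Z_{s,T}})\|v-w\|_{Z_{s,T}}$. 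Letting $\Gamma(v)$ be the solution of $\p_t\Gamma(v)+\mathcal A\Gamma(v)=-\frac12\p_x(v^2)$, $\Gamma(v)(0)=u_0$, the linear estimate gives $\|\Gamma(v)\|_{Z_{s,T}}\le C_0\|u_0\|_s+C_1\|v\|_{Z_{s,T}}^2$ and a matching Lipschitz bound on a fixed interval; choosing $r=2C_0\|u_0\|_s$ and $\rho$ so small that $4C_0C_1\rho<\tfrac12$, the map $\Gamma$ contracts the ball $\{v\in Z_{s,T}:\|v\|_{Z_{s,T}}\le r\}$ whenever $\|u_0\|_s<\rho$, and its fixed point is the desired solution. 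Uniqueness in $Z_{s,T}$ follows by applying the same linear and bilinear bounds to the difference $r=u-\tilde u$ of two solutions with small data, which solves $r_t+\mathcal Ar=-\frac12\p_x((u+\tilde u)r)$, $r(0)=0$, on a short interval, and then iterating in time.

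I expect the main obstacle to be the linear smoothing estimate at the $H^s$ level: transplanting the propagation-of-regularity mechanism of Proposition \ref{smoothing} from the viscous equation to the dispersive damped flow and conjugating it by $(1-\p_x^2)^{s/2}$ forces one to control, for $s\in(\tfrac12,2]$, the precise order of every commutator produced by the spatial localization and by the damping term --- the lower bound $s>\tfrac12$ being what lets one absorb the resulting first-order contributions using the extra half-derivative carried by $L^2(0,T;H^{s+1/2}(\T))$, and $s\le2$ keeping all commutators within reach of Lemma \ref{commutator} --- together with organizing the absorption of the $L^2_tH^{s+1/2}$-norm so that the constant $C(T)$ stays finite (a fixed short interval, or equivalently a smallness assumption on the data, being all that the statement requires).
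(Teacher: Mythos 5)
Your proposal is correct and follows essentially the same route as the paper: generation of the damped semigroup via dissipativity (the paper's Lemma \ref{lem100}, resting on Claim 1), the key linear smoothing estimate $\|u\|_{Z_{s,T}}\le C(\|u_0\|_s+\|f\|_{L^2(0,T;H^{s-1/2})})$ obtained by conjugating the propagation-of-regularity argument of Proposition \ref{smoothing} (this is exactly Proposition \ref{prop10}, which the paper proves with $D^s$ in place of $(1-\partial_x^2)^{s/2}$ and absorbs the source term by Young's inequality rather than by shrinking $T$), the bilinear bound via $H^s\hookrightarrow L^\infty$ for $s>1/2$, and a contraction under a smallness condition on $\|u_0\|_s$. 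The only differences are cosmetic, except that your uniqueness step (difference equation plus iteration in time) gives uniqueness in all of $Z_{s,T}$ rather than merely in the fixed-point ball, which is a slight strengthening.
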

The proof of Theorem \ref{thmA} rests on the smoothing effect due to the damping term, namely
\begin{equation}
\label{R3}
\int_0^T \|e^{-t({\mathcal H} \partial _x ^2 +GDG)} u_0\|^2_{\frac{1}{2}} dt \le C\|u_0\|^2.
\end{equation}
In \cite{RZ2006}, the semi-global exponential stability of the Korteweg-de Vries on a bounded domain $(0,L)$
with a localized damping was first established in $L^2(0,L)$, and next extended to $\{u\in H^3(0,L);\ \ u(0)=u(L)=u_x(L)=0\}$ by using the
Kato smoothing effect in the equation fulfilled by the time derivative of the solution. As the smoothing effect \eqref{R3} is much weaker,
that argument cannot be used. The semi-global exponential stability of \eqref{BOs} in $H^0_0(\T )$, if true, is thus open. However, a local
exponential stability in $H^s_0(\T )$ for $s>1/2$ can be derived.
\begin{theorem}
\label{thmB}
Let $s\in(\frac{1}{2},2]$. Then there exist some numbers $\rho >0$,  $\lambda >0$ and $C>0$ such that for any $u_0\in H^s_0(\T )$ with
$\|u_0\|_s <\rho$, there is a (unique) solution $u:\R ^+ \to H^s_0(\T )$ of \eqref{BOs} with $u\in Z_{s,T}$  for all $T>0$
and such that
\begin{equation}
\|u(t)\|_s \le C e^{-\lambda t} \|u_0\|_s \qquad \forall t\ge 0.
\end{equation}
\end{theorem}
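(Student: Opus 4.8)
\emph{Proof proposal.} The plan is to deduce Theorem~\ref{thmB} from three properties of the linear closed-loop semigroup $W(t):=e^{-t({\mathcal H}\partial_x^2+GDG)}$ on $H^s_0(\T)$, plugged into the fixed-point argument behind Theorem~\ref{thmA}, run this time on a \emph{fixed} interval rather than a short one.

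\textbf{Step 1 (linear estimates on a fixed interval $T>0$).} First, the $H^s$ analogue of the smoothing estimate \eqref{R3}, namely $\|W(\cdot)u_0\|_{Z_{s,T}}\le C(T)\|u_0\|_s$, together with the companion Duhamel estimate $\|\int_0^\cdot W(\cdot-\tau)g(\tau)\,d\tau\|_{Z_{s,T}}\le C(T)\|g\|_{L^2(0,T;H^{s-\frac12}(\T))}$; both follow from the $H^s$ propagation of regularity (Proposition~\ref{prop10}) applied to the linear equation, together with the $H^s$ energy computation behind Claim~1 in the proof of Lemma~\ref{linearestim}, and they are exactly what is needed to prove Theorem~\ref{thmA}. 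Second, and this is the genuinely new ingredient, the $H^s$ exponential decay $\|W(t)u_0\|_s\le Ce^{-\lambda t}\|u_0\|_s$. Exactly as for Theorem~\ref{thmstab1}, this reduces to an observability inequality $\|u_0\|_s^2\le C(T)\int_0^T\|D^{\frac12}(Gv)\|_s^2\,dt$ for $v:=W(\cdot)u_0$, which I would establish by compactness--uniqueness: if it fails, normalize $\|v_0^n\|_s=1$ with $\int_0^T\|D^{\frac12}(Gv^n)\|_s^2\,dt\to0$; Proposition~\ref{prop10} bounds $(v^n)$ in $L^2(0,T;H^{s+\frac12}(\T))$ and the equation bounds $(v^n_t)$ in $L^2(0,T;H^{s-\frac32}(\T))$, so by Aubin--Lions $v^n\to v$ strongly in $L^2(0,T;H^s(\T))$; passing to the limit yields $Gv\equiv0$ on $\omega\times(0,T)$ and $v_t+{\mathcal H}v_{xx}=0$, whence $v\equiv0$ by Proposition~\ref{unique_continuation} with $\alpha=\varepsilon=0$; and the $H^s$ energy identity for $v^n$ then forces $\|v_0^n\|_s\to0$ (the commutator error terms produced by Claim~1 each carry a factor $Gv^n$, which tends to $0$ in $L^2(0,T;H^{s+\frac12}(\T))$, while $\|v^n(t)\|_s\to0$ for a.e.\ $t$ by the strong convergence), contradicting $\|v_0^n\|_s=1$.

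\textbf{Step 2 (iteration).} Combining Step~1 with the bilinear bound $\|uu_x\|_{L^2(0,T;H^{s-\frac12}(\T))}\le C\|u\|_{Z_{s,T}}^2$ (valid for $\tfrac12<s\le2$ since $H^s(\T)\hookrightarrow L^\infty(\T)$ and $\|u^2\|_{H^{s+\frac12}}\lesssim\|u\|_{L^\infty}\|u\|_{H^{s+\frac12}}$), the map $\Gamma(v)=W(\cdot)u_0-\int_0^\cdot W(\cdot-\tau)(vv_x)\,d\tau$ contracts on a small ball of $Z_{s,T}$ as soon as $\|u_0\|_s\le\rho(T)$, producing a unique $u\in Z_{s,T}$ that solves \eqref{BOs} and satisfies $\|u\|_{Z_{s,T}}\le 2C(T)\|u_0\|_s$. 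Now fix $T$ so large that the decay constant of Step~1 obeys $Ce^{-\lambda T}\le\tfrac12$, and take $\rho\le\rho(T)$ small. Then, by Duhamel and Step~1,
\[
\|u(T)\|_s\le \|W(T)u_0\|_s+\Big\|\int_0^T W(T-\tau)(uu_x)(\tau)\,d\tau\Big\|_s\le\big(\tfrac12+C'(T)\|u_0\|_s\big)\|u_0\|_s\le\tfrac34\|u_0\|_s
\]
after shrinking $\rho$ once more. Since $\tfrac34\|u_0\|_s<\rho$, one restarts at time $T$; by induction the solution is global, lies in $Z_{s,nT}$ for every $n$, and $\|u(nT)\|_s\le(\tfrac34)^n\|u_0\|_s$, hence $\|u\|_{Z_{s,[nT,(n+1)T]}}\le 2C(T)(\tfrac34)^n\|u_0\|_s$. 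This gives $\|u(t)\|_s\le Ce^{-\lambda' t}\|u_0\|_s$ with $\lambda'=T^{-1}\ln(4/3)>0$; uniqueness on consecutive slabs glues to global uniqueness, and Theorem~\ref{thmA} is recovered a fortiori.

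\textbf{Main obstacle.} Everything delicate sits in Step~1. At the $L^2$ level $GDG$ is self-adjoint and nonnegative, so the energy identity \eqref{identity} exhibits the dissipation directly; at the $H^s$ level there is no such identity, and one is forced through the compactness--uniqueness scheme, which is why the $H^s$ version of the propagation-of-regularity property, Proposition~\ref{prop10}, is indispensable. Transporting the argument of Proposition~\ref{smoothing} to $H^s$ --- controlling the additional commutators it produces and keeping the constants independent of $\varepsilon$ and of the size of the data --- is the technical heart of the matter; once that is in hand, Steps~1--2 are routine modulo Kato--Ponce-type product estimates on $\T$ in the range $\tfrac12<s\le2$.
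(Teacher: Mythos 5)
Your Step 2 (solve on a fixed slab $[0,T]$ with $T$ chosen from the linear decay rate, prove $\|u(T)\|_s\le\frac34\|u_0\|_s$, restart) is a sound, slightly different packaging of the nonlinear part: the paper instead runs a single contraction in the exponentially weighted space $E=\{u:\ \sup_n e^{n\lambda}\tres u\tres_n<\infty\}$, but the two devices are interchangeable. The real problem is in Step 1, in the sentence ``this reduces to an observability inequality.'' That reduction is valid in $H^0_0(\T)$ because there $GDG$ is self-adjoint and nonnegative, so the energy identity reads $\|u(T)\|^2=\|u_0\|^2-2\int_0^T\|D^{1/2}(Gu)\|^2dt$ and observability converts the dissipated quantity into a fixed fraction of $\|u_0\|^2$. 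In $H^s_0(\T)$ with $s\neq 0$ there is no such identity: by Claim 1 in the proof of Lemma \ref{linearestim}, $(GDGu,u)_s=\|D^{1/2}(Gu)\|_s^2+E(u)$, where the commutator remainder $E(u)$ satisfies only $|E(u)|\le C\|Gu\|_s\|u\|_s$ and has no sign. Hence $\|u(T)\|_s^2=\|u_0\|_s^2-2\int_0^T\|D^{1/2}(Gu)\|_s^2dt-2\int_0^TE(u)\,dt$, and by Proposition \ref{prop10} the error $\int_0^T|E(u)|\,dt$ is of size $O(\|u_0\|_s^2)$ --- the same order as the gain $\frac{2}{C_{\mathrm{obs}}}\|u_0\|_s^2$ that observability would provide, with no smallness to absorb it. So even granting your observability inequality $\|u_0\|_s^2\le C\int_0^T\|D^{\frac12}(Gv)\|_s^2dt$ (which your compactness--uniqueness sketch does prove: in the contradiction argument the hypothesis $\int_0^T\|D^{1/2}(Gv^n)\|_s^2dt\to0$ kills the error terms, exactly as you note), it does not yield $\|S(T)\|_{{\mathcal L}(H^s_0)}<1$, and the linear $H^s$ decay does not follow. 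Proving instead the stronger inequality $\|v_0\|_s^2\le C\int_0^T(GDGv,v)_s\,dt$ by the same contradiction scheme does not work either: its negation only gives $\limsup_n\int_0^T(GDGv^n,v^n)_s\,dt\le 0$, which, because of the indefinite term $E$, does not force $Gv\equiv0$ in the limit.

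The paper closes exactly this gap by a different device. For $u_0\in H^2_0(\T)$ the time derivative $v=u_t$ solves the \emph{same} linear equation with datum $v_0=-({\mathcal H}u_{0,xx}+GDGu_0)$, so the already-established $L^2$ decay applied to $v$ gives $\|u_t(t)\|\le Ce^{-\lambda t}\|u_0\|_2$; recovering $\|u\|_2$ from $\|{\mathcal H}u_{xx}\|=\|u_t+GDGu\|$ then yields $\|S(t)u_0\|_2\le Ce^{-\lambda t}\|u_0\|_2$, and interpolation between $s=0$ and $s=2$ gives the decay \eqref{Z5} for all $s\in[0,2]$ --- which is also the real reason the theorem is stated only for $s\le2$. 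If you replace the observability portion of your Step 1 by this $u_t$-plus-interpolation argument, the remainder of your proof goes through.
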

The proofs of Theorem \ref{thmA} and Theorem \ref{thmB} are given in the next sections.
\subsubsection{Linear Theory}
In this section, we focus on the well-posedness and the smoothing property of the linearized
BO equation with localized damping:
\begin{equation}
\label{R5}
u_t + {\mathcal H} u_{xx} + GDGu =0, \qquad u(0)=u_0.
\end{equation}
Let $s\in \R$ and let $Au=-({\mathcal H}u_{xx} +GDGu)$ with domain ${\mathcal D}(A)=H^{s+2}_0(\T )\subset H^s_0(\T )$. Our first result is the
\begin{lemma}
\label{lem100}
$A$ generates a continuous semigroup in $H^s_0(\T )$, denoted by $(S(t))_{t\ge 0}$.
\end{lemma}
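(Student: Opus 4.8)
The plan is to apply the Lumer--Phillips theorem: if $A$ is densely defined and closed on the Hilbert space $H^s_0(\T)$, if $A-\omega I$ is dissipative for some $\omega\in\R$, and if $\mathrm{Range}(\lambda_0 I-A)=H^s_0(\T)$ for one $\lambda_0>\omega$, then $A$ generates a $C_0$-semigroup $(S(t))_{t\ge 0}$ with $\|S(t)\|_{\mathcal L(H^s_0(\T))}\le e^{\omega t}$. So three things have to be checked.

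First, that $A$ is densely defined and closed. Density is clear since the mean-zero trigonometric polynomials belong to $H^{s+2}_0(\T)=\mathcal D(A)$. For closedness, one uses that $\mathcal H\p_x^2:H^{s+2}_0(\T)\to H^s_0(\T)$ is an isomorphism (its symbol $ik|k|$ satisfies $|ik|k||\asymp\lan k\ran^2$ for $k\neq 0$) while $G(D(G\,\cdot))$ maps $H^{s+2}_0(\T)$ boundedly into $H^{s+1}_0(\T)$ (since $G\in\mathcal L(H^r(\T),H^r_0(\T))$ for every $r$ and $D$ lowers the order by one). Together with an interpolation inequality $\|u\|_{s+1}\le\epsilon\|u\|_{s+2}+C_\epsilon\|u\|_s$, this shows that the graph norm of $A$ is equivalent to $\|\cdot\|_{s+2}$, which is complete.

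Second, dissipativity of $A-\omega I$ for $\omega$ large. Since $\mathcal H\p_x^2$ is a Fourier multiplier with purely imaginary symbol and commutes with $(1-\p_x^2)^{s/2}$, one has $\mathrm{Re}\,(\mathcal H\p_x^2 u,u)_s=0$ for all $u\in H^{s+2}_0(\T)$. Combining this with Claim~1 in the proof of Theorem~\ref{GWP} (which holds for every $s\in\R$), we obtain
\[
\mathrm{Re}\,(Au,u)_s=-\mathrm{Re}\,(\mathcal H\p_x^2 u,u)_s-\mathrm{Re}\,(G(D(Gu)),u)_s\le C\|u\|_s^2
\]
for some constant $C=C(s)$, so $A-\omega I$ is dissipative as soon as $\omega\ge C$.

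Third — and this is the step I expect to be the main obstacle — the range condition $\mathrm{Range}(\lambda_0 I-A)=H^s_0(\T)$ for $\lambda_0>C$. The naive attempt to treat $G(D(G\,\cdot))$ as a bounded perturbation of the generator $-\mathcal H\p_x^2$ of a unitary group fails, because $G(D(G\,\cdot))$ is genuinely of order one. Instead I would solve the stationary equation $\lambda_0 u+\mathcal H\p_x^2 u+G(D(Gu))=f$ by a Galerkin scheme: writing $P_N$ for the orthogonal projection onto $\mathrm{Span}\{e^{ikx};\,0<|k|\le N\}$, the finite-dimensional problem $\lambda_0 u_N+P_N\mathcal H\p_x^2 u_N+P_N G(D(Gu_N))=P_Nf$ has a unique solution (the associated operator is injective on $P_NH^s_0(\T)$ by the real-part computation above), and the dissipativity estimate gives the uniform bound $\|u_N\|_s\le(\lambda_0-C)^{-1}\|f\|_s$. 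Extracting a weak limit $u_N\rightharpoonup u$ in $H^s_0(\T)$ and pairing the Galerkin identity against mean-zero trigonometric polynomials (using the formal skew-adjointness of $\mathcal H\p_x^2$ and self-adjointness of $G(D(G\,\cdot))$ in $L^2(\T)$) shows that $u$ solves $\lambda_0 u+\mathcal H\p_x^2 u+G(D(Gu))=f$ in $\mathcal D'(\T)$; an elliptic bootstrap ($\mathcal H\p_x^2 u=f-\lambda_0 u-G(D(Gu))\in H^{s-1}_0(\T)$ forces $u\in H^{s+1}_0(\T)$, whence $G(D(Gu))\in H^s_0(\T)$ and $u\in H^{s+2}_0(\T)$) places $u$ in $\mathcal D(A)$ with $(\lambda_0 I-A)u=f$. (Alternatively, for $s=0$ one may observe that $A^{*}=\mathcal H\p_x^2-G(D(G\,\cdot))$ is also dissipative in $H^0_0(\T)$, so $\mathrm{Range}(\lambda_0 I-A)$ is dense, and it is closed by the lower bound $(\lambda_0-C)\|u\|\le\|(\lambda_0 I-A)u\|$.) The resolvent bound $\|(\lambda_0 I-A)^{-1}\|_{\mathcal L(H^s_0(\T))}\le(\lambda_0-C)^{-1}$ then comes along automatically, and Lumer--Phillips yields the continuous semigroup $(S(t))_{t\ge 0}$, with $\|S(t)\|_{\mathcal L(H^s_0(\T))}\le e^{Ct}$.
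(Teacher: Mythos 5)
Your proof is correct, and it reaches the conclusion by a genuinely different route from the paper on the one step that matters. Both arguments hinge on the same dissipativity estimate: Claim~1 from the proof of Theorem~\ref{GWP} gives $(Au,u)_s\le C\|u\|_s^2$, exactly as you use it. Where you diverge is the range condition in Lumer--Phillips. You verify $\mathrm{Range}(\lambda_0 I-A)=H^s_0(\T)$ by hand, via a Galerkin approximation on the Fourier modes $0<|k|\le N$ followed by an elliptic bootstrap to place the weak limit in $H^{s+2}_0(\T)=\mathcal D(A)$; this is self-contained and constructive, but it is the long way around. The paper instead observes that $\mathcal D(A^*)=\mathcal D(A)=H^{s+2}_0(\T)$ and that $A^*-C$ is dissipative as well (since $(A^*u-Cu,u)_s=(u,Au-Cu)_s\le 0$), and then invokes the corollary of Lumer--Phillips (Pazy, Cor.~4.4) which trades the range condition for dissipativity of the adjoint --- precisely the shortcut you yourself record in your parenthetical remark for $s=0$. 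That shortcut in fact works for every $s\in(1/2,2]$ (and indeed every $s\in\R$), not just $s=0$: the only point to check is that the adjoint of $GDG$ with respect to $(\cdot,\cdot)_s$ differs from $GDG$ by commutators of $G$ with $(1-\p_x^2)^{s/2}$, which are of lower order and hence absorbed into the constant $C$ of Claim~1. So your main argument buys independence from the identification of $\mathcal D(A^*)$ at the cost of the Galerkin machinery, while the paper's buys brevity at the cost of that (easy but nontrivial for $s\neq 0$) identification. One small remark: your closedness argument via the equivalence of the graph norm with $\|\cdot\|_{s+2}$ is fine, and is the content of the paper's unproved ``clearly''.
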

\noindent
{\em Proof.} Let $C=C(s)$ be the constant in Claim 1. Clearly, $A-C$ is a densely defined closed operator in
$H^s_0(\T )$. Furthermore, by Claim 1,
\[
(Au -Cu,u)_s \le - \| D^{\frac{1}{2}}  (Gu)\| _s^2 \quad \forall u\in H^{s+2}_0(\T ),
\]
which shows that $A-C$ is dissipative. It is easily verified that $D(A^*)=D(A)=H^{s+2}_0(\T )$. Thus
\[
(A^*u-Cu,u)_s=(u,Au-Cu)_s\le 0\quad \forall u\in H^{s+2}_0(\T ),
\]
so that $A^*-C$ is dissipative too. Thus, $A-C$ generates a semigroup of contractions in $H^s_0(\T )$ by \cite[Cor. 4.4, p. 15]{pazy}.\qed

Now we turn our attention to the smoothing effect.
\begin{proposition}
\label{prop10}
Let $s\ge 0$, $v_0\in H^s_0(\T )$ and $g\in L^2(0,T,H^{s-\frac{1}{2}}_0(\T ))$. Then the solution $v$ of
\begin{equation}
\label{R10}
v_t +{\mathcal H} v_{xx} +GDG v=g, \qquad v(0)=v_0
\end{equation}
satisfies $v\in Z_{s,T}$  with
\begin{equation} \|v\|_{Z_{s,T}}
\le C(s,T) \left(   ||v_0||_s + ||g||_{L^2(0,T,H^{s-\frac{1}{2}}(\T ))} \right) ,
\label{R12}
\end{equation}
$C(s,T)$ being nondecreasing in $T$.
\end{proposition}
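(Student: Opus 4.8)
The plan is to combine an energy estimate in $H^s(\T)$, which controls $\|v\|_{L^\infty(0,T,H^s)}$ together with the \emph{localized} smoothing quantity $\int_0^T\|D^{\frac12}(Gv)\|_s^2\,dt$, with a propagation of regularity estimate at level $H^s$ that upgrades this localized quantity to the full $\int_0^T\|v\|_{s+\frac12}^2\,dt$, and then to close the resulting system of inequalities by Gronwall and an absorption argument handling the source $g$, which is half a derivative too rough for a naive Duhamel estimate. By Lemma \ref{lem100} and Duhamel's formula it suffices to establish \eqref{R12} as an a priori estimate for smooth data, say $v_0\in\bigcap_m H^m_0(\T)$ and $g\in C^\infty(\T\times[0,T])$, in which case $v\in\bigcap_m C([0,T],H^m_0(\T))$ and all manipulations below are justified; the general case follows by density and continuity of the solution map.

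First I would take the $H^s$-scalar product of \eqref{R10} with $v$. Since ${\mathcal H}\partial_x^2$ is a skew-adjoint Fourier multiplier it drops out, and Claim 1 gives $-(GDGv,v)_s\le C\|v\|_s^2-\|D^{\frac12}(Gv)\|_s^2$, so that
\[
\frac12\frac{d}{dt}\|v\|_s^2+\|D^{\frac12}(Gv)\|_s^2\le C\|v\|_s^2+\|g\|_{s-\frac12}\|v\|_{s+\frac12}.
\]
Writing $N=\|v\|_{L^\infty(0,T,H^s)}$, $M=\|v\|_{L^2(0,T,H^{s+\frac12})}$, $G_0=\|g\|_{L^2(0,T,H^{s-\frac12})}$ and $P=\int_0^T\|D^{\frac12}(Gv)\|_s^2\,dt$, Gronwall's lemma gives $N^2\le e^{CT}(\|v_0\|_s^2+2G_0M)$, and integrating in $t$ gives $P\le \|v_0\|_s^2+CTN^2+G_0M$.

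The core step is the propagation of regularity estimate at level $H^s$, obtained by adapting the commutator argument in the proof of Proposition \ref{smoothing} to the source term $F:=g-GDGv$ and to the level $s$ (here $\varepsilon=0$, which only simplifies matters). I would set $Lv=v_t+{\mathcal H}v_{xx}$ (formally skew-adjoint), take $A=\Lambda^s\varphi\Lambda^s$ with $\Lambda=(1-\partial_x^2)^{\frac12}$ and $\varphi\in C^\infty(\T)$, and compute $([L,A]v,v)_{L^2_{t,x}}$. As in \eqref{K1}, $[L,A]v$ contains the term $2(\partial_x\varphi){\mathcal H}\partial_x v=2(\partial_x\varphi)Dv$ plus commutator terms bounded in $L^2$ by Lemma \ref{commutator} and Remark \ref{rmk2.6}; the right-hand side contributes $|(GDGv,Av)_{L^2_{t,x}}|$, controlled by $P$ and lower-order terms exactly as in Proposition \ref{smoothing}, and $|(g,Av)_{L^2_{t,x}}|$, which after moving $\Lambda^{s+\frac12}$ onto $\varphi\Lambda^s v$ (a commutator of order $-\frac12$ costs only $\|v\|_s$) is bounded by $\|g\|_{s-\frac12}(\|v\|_{s+\frac12}+\|v\|_s)$, hence after integration in time and Cauchy--Schwarz by $\eta M^2+C_\eta G_0^2+C\|v\|_{L^2_tH^s}^2$ with $\eta$ small. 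Localizing with $b\in C_0^\infty(\{a>0\})$, writing $b^2(x)-b^2(x-x_0)=\partial_x\varphi$, using ${\mathcal H}\partial_x=D$ and the estimate \eqref{Z1} for $\int_0^T\|D^{\frac12}(bv)\|_s^2$, and then covering $\T$ by a partition of unity as in Proposition \ref{smoothing}, I would obtain
\[
M^2=\int_0^T\|D^{\frac12}v\|_s^2\,dt\le C\big(P+TN^2+G_0M\big).
\]

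Finally I would close the estimates: inserting the bounds for $N^2$ and $P$ into this inequality yields $M^2\le C(T)(\|v_0\|_s^2+G_0M)$, hence $M\le C(T)(\|v_0\|_s+G_0)$ by solving the quadratic inequality, and then $N\le C(T)(\|v_0\|_s+G_0)$; adding gives \eqref{R12}. Every constant produced is built from $e^{CT}$ and powers of $T$, so it is nondecreasing in $T$; alternatively, restricting a solution on $[0,T]$ to $[0,T']$ with $T'<T$ shows directly that $C(s,T)$ may be taken nondecreasing. The main obstacle is the propagation-of-regularity step at level $H^s$: one must keep careful track of the $\Lambda^s$-commutators (combining standard commutator estimates with the periodic commutator Lemma \ref{commutator}) and verify that the source $g$, which lives only in $H^{s-\frac12}$, enters only through terms absorbable into $\eta M^2$ rather than obstructing the closing of the estimate.
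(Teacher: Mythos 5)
Your proposal is correct and follows essentially the same strategy as the paper: an $H^s$ energy identity via Claim 1 controlling $\|v\|_{L^\infty_tH^s}$ and the localized quantity $\int_0^T\|D^{\frac12}(Gv)\|_s^2\,dt$, followed by the propagation-of-regularity commutator argument (translation trick $b^2(x)-b^2(x-x_0)=\partial_x\varphi$, the estimate \eqref{Z1}, partition of unity) and absorption of the $\|g\|_{s-\frac12}\|v\|_{s+\frac12}$ terms. The only cosmetic difference is that you run the commutator argument directly at level $H^s$ with $A=\Lambda^s\varphi\Lambda^s$, whereas the paper first proves the case $s=0$ and then reduces $s>0$ to it by setting $w=D^sv$ and treating the commutator $E=[D^s,G]DGD^{-s}+GD[D^s,G]D^{-s}$ as a bounded perturbation; the two are equivalent.
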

\noindent
{\em Proof.} Let us assume first that $s=0$. To have enough regularity in the computations, we assume that
$v_0\in H^2_0(\T )$  and that $g\in C([0,T],H^2_0(\T ))$, so that the solution $v$ of \eqref{R10} satisfies
$v\in C([0,T],H^2_0(\T))\cap C^1([0,T],H^0_0(\T ))$. We now proceed as in the proof of Proposition \ref{smoothing}.
We set $Lv=v_t + {\mathcal H} v_{xx}$, $f=-GDGv$, so that $L v= f+g$. Pick any $\varphi \in C^\infty (\T )$, and let
$Av=\varphi (x) v$. Then
\[
\vert \int_0^T ([L,A]v,v)\, dt \vert \le 2  \vert \int_0^T (f+g,\varphi v) dt \vert +\|\varphi \|_{L^\infty}
\big( \|v_0\|^2 + \|v(T)\|^2 \big) \cdot
\]
Scaling in \eqref{R10} by $v$ yields
\begin{eqnarray*}
\frac{1}{2}\|v(t) \|^2 + \int_0^t \|D^{\frac{1}{2}} Gv\|^2 d\tau
&=  & \frac{1}{2} \|v_0\|^2 +\int_0^t (g,v) d\tau \\
&\le& \frac{1}{2} \|v_0\|^2 +\int_0^T \|g\|_{-\frac{1}{2}} \|v\|_{\frac{1}{2}} dt.
\end{eqnarray*}
This yields
\begin{equation}
\label{R15}
\|v\|^2_{L^\infty (0,T,H^0(\T ))} +\int_0^T \|D^{\frac{1}{2}} (Gv)\|^2 d\tau
\le \frac{3}{2} \|v_0\|^2
+3\int_0^T \|g\|_{-\frac{1}{2}}\|v\|_{\frac{1}{2}} dt.
\end{equation}
Computations similar to those in Proposition \ref{smoothing}  give that
\begin{eqnarray*}
\vert \int _0^T (f+g,\varphi (x) v)\, d\tau \vert
&\le &  C\|\varphi \|_1 \int_0^T ( \|D^{\frac{1}{2}}(Gv)\|^2 +
\|v\|^2  + \|g\|_{-\frac{1}{2}} \|v\|_{\frac{1}{2}} ) dt \\
&\le& C(T, \|\varphi \|_1)
\left(
\|v_0\|^2 + \int_0^T \|g\|_{-\frac{1}{2}} \|v\|_{\frac{1}{2}} dt
\right) ,
\end{eqnarray*}
hence
\[
\vert \int_0^T ([L,A]v,v)dt \vert
\le C(T,\|\varphi \|_1)
\left( \|v_0\|^2 + \int_0^T \|g\|_{-\frac{1}{2}}\|v\|_{\frac{1}{2}}dt \right).
\]
Combined with \eqref{K1}-\eqref{K2}, the last inequality gives
\begin{equation}
\vert \int_0^T (\partial_x \varphi Dv,v)\,  dt \vert
\le C(T,\|\varphi \|_1)
\left( \|v_0\|^2 + \int_0^T \|g\|_{-\frac{1}{2}}\|v\|_{\frac{1}{2}}dt \right).
\label{K3}
\end{equation}
We pick again $b\in C^\infty _0(\omega )$, where $\omega =\{ x\in \T;\ a(x)>0\}$ and $x_0\in \T$.
Writing again $b^2(x)-b^2(x-x_0)=\partial _x \varphi$, we obtain successively, with \eqref{Z1} and  \eqref{R15}, that
\begin{eqnarray*}
\int_0^T \|D^{\frac{1}{2}} (bv)\|^2 dt &\le&
C (T) \left( \|v_0\|^2 + \int_0^T \|g\|_{-\frac{1}{2}}\|v\|_{\frac{1}{2}}dt \right),\\
\vert \int_0^T (b^2Dv,v) dt \vert &\le&
C\int_0^T \big( \|v\|^2 + \|D^{\frac{1}{2}} (bv)\|^2\big) dt \\
&\le& C(T)  \left( \|v_0\|^2 + \int_0^T \|g\|_{-\frac{1}{2}}\|v\|_{\frac{1}{2}}dt \right)
\end{eqnarray*}
and therefore, with \eqref{K3},
\begin{equation*}
\vert \int_0^T (b^2(x-x_0)Dv,v)dt \vert  \le
C(T)  \left( \|v_0\|^2 + \int_0^T \|g\|_{-\frac{1}{2}}\|v\|_{\frac{1}{2}}dt \right) .
\end{equation*}
Using a partition of unity, this yields
\[
\int_0^T \| v\|_{\frac{1}{2}} ^2 dt \le C(T)
\left(
\|v_0\|^2 + \int_0^T \|g\|^2 _{-\frac{1}{2}}dt
\right)
+\frac{1}{2}\int_0^T \|v\|^2_{\frac{1}{2}}dt.
\]
Combined with \eqref{R15}, this gives \eqref{R12} for $s=0$ when $v_0\in H^2_0(\T )$ and
$g\in C([0,T],H^2_0(\T ))$. This is also true for $v_0\in H^0_0(\T )$ and
$g\in L^1(0,T,H^{-\frac{1}{2}}_0(\T ))$ by density.

Let us now assume that $s\in (0,2]$. Pick  again any $v_0\in H^2_0(\T )$, $g\in C([0,T],H^2_0(\T ))$, and let
$v\in C([0,T],H^2_0(\T ))\cap C^1([0,T],H^0_0(\T ))$ denote the solution of \eqref{R10}. Set $w=D^s v$ and $h=D^s g$.
Note that
\[
D^s(GDGv) = GDG w + Ew
\]
with $E=[D^s,G]DGD^{-s} +GD [D^s,G]D^{-s}$. Note that $\|Ew\|\le C\|w\|$ and that $w$ solves
\[
w_t + {\mathcal H} w_{xx} +GDG w + Ew=h, \quad w(0)=w_0:=D^s v_0.
\]
Since
\begin{eqnarray*}
\vert \int_0^T (\varphi w,Ew)\, dt \vert
&\le&  C\|\varphi \|_1  \|w\|^2_{L^2 (0,T,H^0(\T ) ) } \\
&\le&  C(T , \|\varphi \|_1) \left(  \|w_0\|^2 + \int_0^T \| h \|_{-\frac{1}{2} }  \| w \| _{\frac{1}{2}}dt \right),
\end{eqnarray*}
we obtain in a similar fashion as above that
\[
\|w\|^2_{L^\infty (0,T,H^0(\T ))}  + \int_0^T \|  w\|_{\frac{1}{2}}^2 dt \le C(T)
\left( \|w_0\|^2 + \int_0^T \|h\|^2_{-\frac{1}{2}} dt \right),
\]
i.e.
\begin{equation}
\|v\|^2_{L^\infty (0,T,H^s(\T ))}  + \| v \|^2_{L^2(0,T,H^{s+\frac{1}{2}}(\T ))}
\le C(T)
\left( \|v_0\|_s^2 + \| g \|^2_{L^2(0,T,H^{s-\frac{1}{2}}(\T ))}  \right).
\label{R30}
\end{equation}
Inequality \eqref{R30} and the fact that $v\in C([0,T],H^s_0(\T ))$ are also true for
$v_0\in H^s_0(\T )$ and $g\in L^2(0,T, H^{s-\frac{1}{2}}_0(\T ))$ by density.\qed
\begin{corollary}
\label{cor10}
Let $s\ge 0$ and $B\in {\mathcal L} (H^s_0(\T ))$. Then for any  $v_0\in H^s_0(\T )$, the solution $v$ of
\begin{equation}
\label{G1}
v_t +{\mathcal H} v_{xx} +GDG v= Bv, \qquad v(0)=v_0
\end{equation}
fulfills $v\in Z_{s,T}$  with
\begin{equation}
||u||_{Z_{s,T}} \le C(s,T) ||v_0||_s.
\label{G2}
\end{equation}
\end{corollary}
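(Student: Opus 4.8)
The plan is to treat the term $Bv$ on the right-hand side of \eqref{G1} as a source term $g$ and close the estimate by a fixed-point/continuation argument built on Proposition \ref{prop10}. First I would observe that since $B\in\mathcal L(H^s_0(\T))$, for any $v\in Z_{s,T}$ we have $Bv\in C([0,T],H^s_0(\T))\subset L^2(0,T,H^{s-\frac12}_0(\T))$ with
\[
\|Bv\|_{L^2(0,T,H^{s-\frac12}(\T))}\le \|B\|\, T^{\frac12}\, \|v\|_{L^\infty(0,T,H^s(\T))}\le \|B\|\, T^{\frac12}\, \|v\|_{Z_{s,T}}.
\]
Thus the map $\Lambda$ sending $w\in Z_{s,T}$ to the solution of $v_t+\mathcal H v_{xx}+GDGv=Bw$, $v(0)=v_0$, is well-defined by Proposition \ref{prop10}, and by \eqref{R12},
\[
\|\Lambda w^1-\Lambda w^2\|_{Z_{s,T}}\le C(s,T)\,\|B\|\,T^{\frac12}\,\|w^1-w^2\|_{Z_{s,T}}.
\]

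Since $C(s,T)$ is nondecreasing in $T$, I can fix $T_0=T_0(s,\|B\|)>0$ small enough that $C(s,T_0)\,\|B\|\,T_0^{\frac12}\le\frac12$; then $\Lambda$ is a contraction on $Z_{s,T_0}$, and its unique fixed point is the desired solution $v$ of \eqref{G1} on $[0,T_0]$, satisfying $\|v\|_{Z_{s,T_0}}\le 2C(s,T_0)\|v_0\|_s$. Next I would extend to an arbitrary $T>0$: partition $[0,T]$ into $N\sim T/T_0$ subintervals of length $\le T_0$, solve successively on each $[kT_0,(k+1)T_0]$ with the new initial datum $v(kT_0)$, and observe from the fixed-point bound that $\|v(\,\cdot+kT_0)\|_{Z_{s,T_0}}\le (2C(s,T_0))^{k+1}\|v_0\|_s$. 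Concatenating these pieces shows $v\in Z_{s,T}$ for every $T>0$, with a bound $\|v\|_{Z_{s,T}}\le C(s,T)\|v_0\|_s$ where the new constant $C(s,T)$ grows (at worst exponentially) in $T$; this is exactly \eqref{G2}. Uniqueness on $[0,T]$ follows from uniqueness on each short subinterval, which in turn follows from the contraction property (or directly from the $L^2$ energy estimate in \eqref{R15} applied to the difference of two solutions together with Gronwall).

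The argument is essentially routine; the only point requiring a little care is that the constant $C(s,T)$ in \eqref{R12} must be used on the \emph{short} interval $[0,T_0]$ — where its size is controlled — rather than on all of $[0,T]$ at once, so that the smallness condition $C(s,T_0)\|B\|T_0^{\frac12}\le\frac12$ can be met by shrinking $T_0$; the monotonicity of $C(s,T)$ in $T$ asserted at the end of Proposition \ref{prop10} is what makes this legitimate. There is no genuine obstacle here, since the smoothing estimate has already absorbed the hard analysis; the corollary is just the observation that a bounded perturbation of the generator $-(\mathcal H\partial_x^2+GDG)$ preserves both the well-posedness and the $H^{s+\frac12}$ smoothing.
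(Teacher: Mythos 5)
Your proposal is correct and follows essentially the same route as the paper: both hinge on Proposition \ref{prop10} applied with source $g=Bv$, on the factor $\sqrt{T_0}\,\|B\|_{{\mathcal L}(H^s_0(\T ))}$ made small by shrinking $T_0$ (legitimate precisely because $C(s,T)$ is nondecreasing in $T$), and on iteration over the subintervals $[kT_0,(k+1)T_0]$. The only cosmetic difference is that the paper first gets existence and uniqueness from the bounded-perturbation theorem for semigroup generators (Pazy) and then absorbs the $Bv$ term in the a priori estimate, whereas you construct the solution directly as a fixed point of the source-to-solution map; both are valid.
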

\begin{proof}
Since $A$ is the generator of a continuous semigroup on $H^s_0(\T )$ and $B$ is a bounded operator on
$H^s_0 (\T )$, $A+B$ is the generator of a continuous semigroup on $H^s_0(\T )$ (see e.g. \cite[Thm 1.1 p. 76]{pazy}).
Pick any $v_0\in H^s_0(\T )$, and let $v$ denote the solution of \eqref{G1} given by the semigroup generated by
$A+B$. Noticing that $g:=Bv\in C([0,T]; H^s_0(\T ))$, we infer from Proposition \ref{prop10}  that $v\in Z_{s,T}$ with
\[
  \|v\|_{L^\infty (0,T,H^s(\T ))}
+\|v\|_{L^2(0,T,H^{s+\frac{1}{2}} (\T ))}  \le
C(s,T) \left( \|v_0\|_s +\sqrt{T} \,\|B\|_{ {\mathcal L} (H^s_0(\T ))} \|v\|_{L^\infty (0,T;H^s(\T ))}\right) .
\]
Selecting $T_0>0$ such that $c(s,T_0) \sqrt{T_0} \|B\|_{ {\mathcal L} (H^s_0(\T ))} <1/2$ yields
\begin{equation}
  \|v\|_{L^\infty (0,T_0,H^s(\T ))}
+\|v\|_{L^2(0,T_0,H^{s+\frac{1}{2}} (\T ))}  \le
2 C(s,T_0)  \|v_0\|_s.
\label{G3}
\end{equation}
Successive applications of \eqref{G3} on the intervals $[0,T_0]$, $[T_0,2T_0]$,... give \eqref{G2} for any $T>0$.
\end{proof}

\subsubsection{Proof of Theorem \ref{thmA}}
Pick any $s\in (\frac{1}{2},2]$ and any $T>0$. Let $u_0\in H^s_0(\T )$. We write \eqref{BOs} in its integral form
\begin{equation}
\label{B32}
u(t)=S(t)u_0 -\int_0^t S(t-\tau )(uu_x)(\tau ) \, d\tau.
\end{equation}
Let $\Gamma (v)(t)=S(t)u_0 -\int_0^t S(t-\tau )(vv_x)(\tau )d\tau$. We have, by Proposition \ref{prop10}, that
\[
\| \Gamma (v) \|_{Z_{s,T}}  \le C \left(  \|u_0\|_s + \|  ( \frac{v^2}{2} )_x \|_{L^2(0,T,H^{s-\frac{1}{2}} (\T ))} \right)\cdot
\]
Clearly, for $u,v\in Z_{s,T}$,
\begin{eqnarray*}
\int_0^T \|(uv)_x\|^2_{s-\frac{1}{2}} dt
&\le& C\int_0^T \|uv\|^2_{s+\frac{1}{2}} dt \\
&\le& C\int_0^T \big(
   \|u\|^2_{L^\infty( \T )} \|v\|^2_{s+\frac{1}{2}}
+ \|u\|^2_{s+\frac{1}{2}} \|v\|^2_{L^\infty (\T )}
\big) dt \\
&\le& C\int_0^T \big( \|u\|^2_s\|v\|^2_{s+\frac{1}{2}} + \|u\|^2_{s+\frac{1}{2}} \|v\|^2_s \big) dt\\
&\le& C\left(
\|u\|^2_{ L^\infty (0,T,H^s(\T )) } \|v\|^2_{L^2(0,T,H^{s+\frac{1}{2}} (\T ))}  \right.  \\
&&\qquad  \left. +
\|v\|^2_{ L^\infty (0,T,H^s(\T )) } \|u\|^2_{L^2(0,T,H^{s+\frac{1}{2}} (\T ))}
\right) \\
&\le& C\|u\|_{Z_{s,T}}^2 \|v\|_{Z_{s,T}}^2,
\end{eqnarray*}
where we used the Sobolev embedding $H^s_0(\T )\subset L^\infty (\T )$ for $s>1/2$. Thus, there are some constants $C_0>0$ and $C_1>0$ such that
\begin{eqnarray*}
\| \Gamma (v) \|_{Z_{s,T}}                                     &\le& C_0 \|u_0\|_s + C_1 \| v \|^2_{Z_{s,T}} \qquad \forall v\in Z_{s,T} ,\\
\| \Gamma (v^1) -\Gamma (v^2) \|_{Z_{s,T}} &\le& C_1\big( \| v^1 \|_{Z_{s,T}} + \| v^2 \|_{Z_{s,T}}
\big) \| v^1 -v^2 \|_{Z_{s,T} } \qquad \forall v^1,v^2\in Z_{s,T}.
\end{eqnarray*}
Let $B=\{ v\in Z_{s,T}; \ \|v\|_{Z_{s,T}}  \le R \}$. We choose $R$ in such a way that $B$ is left invariant by $\Gamma$ and $\Gamma $
contracts in $B$, i.e.
\[
C_0\|u_0\|_s + C_1 R^2 \le R, \ \text{ and } 2C_1R<1.
\]
It is sufficient to take $R=(4C_1)^{-1}$ and $u_0\in H^s_0(\T )$ with
$\|u_0\|_s\le \rho := R/(2C_0)$. \qed

\subsubsection{Proof of Theorem \ref{thmB}}
We proceed as in \cite{PR}.
It has been proved that \eqref{BOs} is semi-globally exponentially stable in $H^0_0(\T )$. Obviously, the same analysis shows that
the {\em linearized} BO equation with localized damping is also exponentially stable in $H^0_0(\T )$, i.e.
\begin{equation}
\|S(t)u_0\| \le Ce^{-\lambda t} \|u_0\|
\label{Z100}
\end{equation}
for all $u_0\in H^0_0(\T )$ and some constants $C,\lambda >0$. If $u_0\in H^2_0(\T )$, then  $u(t)=S(t)u_0$ solves
\begin{equation}
\label{Z2}
u_t+{\mathcal H} u_{xx} +GDGu =0, \quad u(0)=u_0.
\end{equation}
Letting $v=u_t$, $v$ solves also
\begin{equation}
\label{Z3}
v_t + {\mathcal H} v_{xx} +GDG v = 0,\quad v(0)=v_0:= -({\mathcal H } u_{0,xx} +GDG u_0).
\end{equation}
\eqref{Z100} yields
\[
\|v(t)\| =\|S(t)v_0\| \le C e^{-\lambda t} \|v_0\|,
\]
and thus
\[
\|S(t) u_0\|_2 \le C' e^{-\lambda t} \|u_0\|_2.
\]
By interpolation, this shows that for any $s\in [0,2]$, for any $u_0\in H^s_0(\T )$  and for some constant $C>0$
(independent of $s$, $u_0$, and $t$), it holds
\begin{equation}
\label{Z5}
\|S(t) u_0\|_s \le C e^{-\lambda t} \|u_0\|_s.
\end{equation}
Let $s>1/2$ and  $u_0\in H^s_0(\T )$. For
\[u\in Z_{s,T}([n,n+1]):=C([n,n+1],H^s_0(\T )) \cap
L^2 (n,n+1,H_0^{s+\frac{1}{2}}(\T )), \]
let
\[
\tres u \tres_n =\|u\|_{L^\infty (n,n+1,H^s(\T )) } + \|u\|_{L^2(n,n+1,H^{s+\frac{1}{2}}(\T )) }\cdot
\]
Finally, let
\[
\|u\|_E =\sup_{n\ge 0} \big( e^{n\lambda} \tres u\tres_n \big) \le +\infty .
\]
Introduce the space
\[
E=\{ u\in C(\R ^+ , H^s_0(\T ))\cap L^2_{loc} (\R ^+, H_0^{s+\frac{1}{2}}(\T )); \ \|u\|_E <\infty \}.
\]
Endowed with the norm $\|\cdot \|_E $, $E$ is a Banach space. We search for a solution of \eqref{BOs} in a closed ball
$B=\{u\in E;\ \|u\|_E \le R \}$ as a fixed point of the map $\Gamma (v)(t)=S(t)u_0 - \int_0^t S(t-\tau ) (vv_x)(\tau ) d\tau$.
By \eqref{Z5}, we have
\begin{equation}
\label{Z7}
\|S(n) u_0\|_s \le Ce^{-n\lambda } \|u_0\|_s \quad \forall n\ge 0.
\end{equation}
Combined  with Proposition \ref{prop10}, this gives for some constant $C_0>0$
\begin{equation}
\label{Z8}
\tres  S(t)u_0 \tres_n \le C_0 \, e^{-n\lambda} \|u_0\|_s,
\end{equation}
hence
\begin{equation}
\label{Z10}
\| S(t)u_0\|_E \le C_0 \|u_0\|_s.
\end{equation}
On the  other hand, for any $u,v\in E$,
\[
\tres \int_0^t S(t-\tau ) [(uv)_x(\tau)] d\tau \tres_n \le I_1+I_2
\]
with
\begin{eqnarray*}
I_1 &=& \tres S(t-n) \int_0^n S(n-\tau ) [(uv)_x (\tau )]d\tau \tres_n,\\
I_2 &=& \tres \int_n^t S(t-\tau ) [(uv)_x (\tau )]d\tau \tres_n
\end{eqnarray*}
By \eqref{R12} and \eqref{Z8},
\begin{eqnarray*}
I_1 &\le & C\| \int_0^n S(n- \tau ) [(uv)_x (\tau )] d\tau \|_s \\
&\le& C\sum_{k=1}^n \|S(n-k) \int_{k-1}^k S(k-\tau ) [(uv)_x(\tau )] \|_s \\
&\le& C\sum_{k=1}^n e^{-(n-k)\lambda} \| \int_{k-1}^k S(k-\tau )
[(uv)_x(\tau ) ]d\tau  \|_s \\
&\le& C \sum_{k=1}^n e^{-(n-k)\lambda } \|(uv)_x\|_{L^2(k-1,k,H^{s-\frac{1}{2}} (\T ))} \\
&\le& C\sum_{k=1}^n e^{-(n-k)\lambda } \tres u \tres_{k-1} \tres v \tres_{k-1}  \\
&\le& Ce^{-n\lambda } \| u\|_E \|v\|_E .
\end{eqnarray*}
On the other hand
\[
I_2\le C\|(uv)_x\|_{L^2(n,n+1,H^{s-\frac{1}{2}} (\T ))} \le C e^{-2n\lambda } \|u\|_E \|v\|_E.
\]
We have proved that for some constant $C_1>0$
\[
\tres \int_0^t S(t-\tau ) [(uv)_x(\tau )] d\tau \tres_n  \le 2 C_1 e^{-n\lambda } \|u\|_E \|v\|_E,
\]
hence
\[
\|\int_0^t S(t-\tau ) [(uv)_x (\tau )] d\tau \|_E \le 2 C_1 \|u\|_E \|v\|_E.
\]
Thus
\begin{eqnarray*}
\|\Gamma (v)\|_E &\le & C_0\|u_0\|_s + C_1 \|v\|_E^2, \\
\|\Gamma (v^1)-\Gamma (v^2)\|_E &\le& C_1( \|v^1\|_E + \|v^2\|_E) \|v^1-v^2\|_E.
\end{eqnarray*}
It follows that $\Gamma$ contracts in the  ball $B=\{ u\in E;\  \|u\|_E \le R \}$ if
\begin{equation}
\label{Z20}
2C_1R<1, \ \text{ and } \  C_0\|u_0\|_s + C_1 R^2 \le R.
\end{equation}
Let $R=\gamma\rho$ ($\gamma$ and $\rho$ being determined later), and assume that $\|u_0\|_s\le \rho$. The conditions
become
\begin{equation}
\label{ZZZ}
2C_1\gamma \rho < 1 ,\text{ and } C_0+C_1\gamma ^2 \rho \le \gamma.
\end{equation}
Pick $\gamma =2C_0$ and $\rho >0$ sufficiently small so that  \eqref{ZZZ} holds. Then $\Gamma$ contracts in $B$.
Replacing $\rho$ by $\|u_0\|_s$, we see that  the fixed point $u=\Gamma (u)$ satisfies
\[
\|u\|_{L^\infty (n, n+1,H^s(\T ))} \le e^{-n\lambda } \| u \|_E \le e^{-n\lambda} \gamma \|u_0\|_s.
\]
It follows that
\[ \|u(t)\|_s  \le Ce^{-\lambda t} \|u_0\|_s \quad \forall t\ge 0\]
for some constant $C>0$, provided that  $\|u_0\|_s\le \rho$.
\qed
\section{Control of the Benjamin-Ono equation}
\null ~\\
Let again $a$ and $G$ be as in \eqref{defa} and \eqref{defG}, respectively. We now focus on the control properties of the full BO equation.
More precisely, we aim to prove the exact controllability  of the system
\begin{equation}
u_t + {\mathcal H} u_{xx} + uu_x =Gh, \qquad u(0)=u_0, \label{Y1}
\end{equation}
where $h$ is the control input. If the exact controllability of the linearized system is well known (cf. Theorem A), the exact controllability of
\eqref{Y1} is challenging, as the contraction mapping theorem cannot be applied directly to BO. To overcome that difficulty, we
incorporate the feedback $f=-DGu$ into the control input $h$  to obtain a strong enough smoothing effect to
apply the contraction principle.   Setting
\begin{equation}
\label{Y3}
h(t) = -DG u(t) +D^{\frac{1}{2}} k(t),
\end{equation}
we are thus led to investigate the controllability of the system
\begin{equation}
u_t + {\mathcal H} u_{xx} + GDGu + uu_x =GD^{\frac{1}{2}} k, \qquad u(0)=u_0.
\label{Y4}
\end{equation}
We shall derive the following local exact controllability result.
\begin{theorem}
\label{controllability}
Let $s\in (\frac{1}{2}, 2]$ and $T>0$. Then there exists $\delta >0$ such that for any $u_0,u_1\in H^s_0(\T )$ with
\begin{equation}
\label{Y5}
\|u_0\|_s \le \delta , \qquad \|u_1\|_s \le \delta ,
\end{equation}
one may find a control $k\in L^2(0,T,H^s_0(\T ))$ such that the system \eqref{Y4} admits a (unique) solution $u$ in the class $Z_{s,T}$ for which
$u(T)=u_1$.
\end{theorem}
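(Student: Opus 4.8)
\emph{Plan.} I would deduce Theorem~\ref{controllability} from two facts: (i) the exact controllability, by means of a \emph{bounded linear} control operator, of the linear damped equation
\[
v_t+\mathcal{H}v_{xx}+GDGv=GD^{\frac{1}{2}}k,\qquad v(0)=v_0,
\]
in $H^s_0(\T)$; and (ii) a contraction-mapping argument in $Z_{s,T}$ that absorbs the nonlinearity $uu_x$, using the smoothing estimate of Proposition~\ref{prop10} together with the bilinear estimate $\|(uv)_x\|_{L^2(0,T,H^{s-\frac{1}{2}}(\T))}\le C\|u\|_{Z_{s,T}}\|v\|_{Z_{s,T}}$ already established in the proof of Theorem~\ref{thmA}.

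\emph{Linear control problem.} Let $(S(t))_{t\ge 0}$ be the semigroup generated by $-(\mathcal{H}\partial_x^2+GDG)$ on $H^s_0(\T)$ (Lemma~\ref{lem100}). Since $GD^{\frac{1}{2}}k\in L^2(0,T,H^{s-\frac{1}{2}}_0(\T))$ whenever $k\in L^2(0,T,H^s_0(\T))$, Proposition~\ref{prop10} shows that $k\mapsto\int_0^tS(t-\tau)GD^{\frac{1}{2}}k(\tau)\,d\tau$ is bounded from $L^2(0,T,H^s_0(\T))$ into $Z_{s,T}$; in particular the end-point map $L\colon k\mapsto\int_0^TS(T-\tau)GD^{\frac{1}{2}}k(\tau)\,d\tau$ is bounded into $H^s_0(\T)$. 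I would then show $L$ is onto by the Hilbert Uniqueness Method: surjectivity of $L$ is equivalent to an observability inequality for the backward adjoint states $\varphi(\tau)=S(T-\tau)^{*}\psi$, which solve a linearised Benjamin--Ono equation with a lower-order term of the same structure and therefore enjoy a smoothing estimate proved exactly as Proposition~\ref{prop10}. That inequality is obtained by the compactness-uniqueness argument used in the proof of Theorem~\ref{thmstab1}: if it fails one normalises a sequence $\psi^n$, uses the uniform smoothing and the Aubin-Lions lemma to pass to a strong limit, and invokes Proposition~\ref{unique_continuation} (with $\alpha=0$, $\varepsilon=0$) to conclude the limit vanishes, a contradiction. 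Taking the minimal-norm preimage yields a bounded linear operator $\Phi\colon H^s_0(\T)\times H^s_0(\T)\to L^2(0,T,H^s_0(\T))$ such that, for $k=\Phi(v_0,v_1)$, the solution $v$ of the linear damped equation lies in $Z_{s,T}$, satisfies $v(T)=v_1$, and $\|v\|_{Z_{s,T}}+\|\Phi(v_0,v_1)\|_{L^2(0,T,H^s_0(\T))}\le C(\|v_0\|_s+\|v_1\|_s)$.

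\emph{Nonlinear control problem.} Given small $u_0,u_1\in H^s_0(\T)$, for $u\in Z_{s,T}$ set $\chi[u]:=\int_0^TS(T-\tau)(uu_x)(\tau)\,d\tau$ (which lies in $H^s_0(\T)$ by the bilinear estimate and Proposition~\ref{prop10}), $k[u]:=\Phi(u_0,u_1+\chi[u])$, and
\[
\Gamma(u)(t):=S(t)u_0-\int_0^tS(t-\tau)(uu_x)(\tau)\,d\tau+\int_0^tS(t-\tau)GD^{\frac{1}{2}}k[u](\tau)\,d\tau .
\]
Then $\Gamma(u)(0)=u_0$ and, since $k[u]$ steers $u_0$ to $u_1+\chi[u]$ for the linear damped equation, $\Gamma(u)(T)=u_1$ for every $u\in Z_{s,T}$; hence a fixed point of $\Gamma$ solves \eqref{Y4} with $u(0)=u_0$, $u(T)=u_1$, and control $k=k[u]\in L^2(0,T,H^s_0(\T))$. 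Proposition~\ref{prop10}, the bilinear estimate and the boundedness of $\Phi$ give
\[
\|\Gamma(u)\|_{Z_{s,T}}\le C_0(\|u_0\|_s+\|u_1\|_s)+C_1\|u\|_{Z_{s,T}}^2,\quad
\|\Gamma(u)-\Gamma(\tilde u)\|_{Z_{s,T}}\le C_1(\|u\|_{Z_{s,T}}+\|\tilde u\|_{Z_{s,T}})\|u-\tilde u\|_{Z_{s,T}},
\]
so for $\|u_0\|_s,\|u_1\|_s\le\delta$ with $\delta$ small, $\Gamma$ contracts a ball of radius $O(\delta)$ in $Z_{s,T}$; its fixed point is the desired solution, unique in $Z_{s,T}$ by the same bilinear estimate.

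\emph{Main obstacle.} The delicate step is the linear one: because the control operator $GD^{\frac{1}{2}}$ is unbounded on $H^s_0(\T)$, the smoothing of Proposition~\ref{prop10} is needed both to make the end-point map act between the right spaces and to produce the strong compactness required in the observability argument; once the bounded linear operator $\Phi$ is in hand, the nonlinear step is a routine perturbation of the well-posedness theory of Theorem~\ref{thmA}.
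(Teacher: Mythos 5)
Your proposal follows essentially the same route as the paper: HUM/duality plus a compactness--uniqueness observability argument for the linear damped system, followed by a contraction in $Z_{s,T}$ built from Proposition \ref{prop10} and the bilinear estimate from the proof of Theorem \ref{thmA}; your operator $\Phi(u_0,u_1)$ and map $\Gamma$ coincide with the paper's $\Lambda$ and $\Gamma$ up to bookkeeping, and your fixed-point step is correct as written.

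The one step you pass over too quickly is the linear exact controllability in $H^s_0(\T )$ itself. Since the control is measured in $L^2(0,T,H^s_0(\T ))$, the adjoint state in the duality argument must be taken in $H^{-s}_0(\T )$, where neither Proposition \ref{unique_continuation} (stated for $u\in L^2(0,T;H^0_0(\T ))$) nor Proposition \ref{prop10} (stated for $s\ge 0$) applies verbatim. The paper devotes a separate step to this: it conjugates the backward adjoint equation by $(1-\partial _x^2)^{-\frac{s}{2}}$, absorbs the resulting commutator as a bounded perturbation $B$ so that the smoothing follows as in Corollary \ref{cor10}, first proves the weakened observability inequality \eqref{Y21} containing a compactly-vanishing remainder $\|v_T\|_{-s-\frac{1}{2}}^2$, and then removes that remainder by a second compactness argument whose unique-continuation input is Lemma \ref{Fourier} (valid in every $H^\sigma (\T )$) rather than Proposition \ref{unique_continuation}. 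None of this changes your architecture, but it is genuine work hidden behind your phrase ``a smoothing estimate proved exactly as Proposition \ref{prop10}'' and your appeal to Proposition \ref{unique_continuation} for adjoint states that do not lie in $L^2(0,T;H^0_0(\T ))$.
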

The proof of Theorem \ref{controllability} is done in three steps. In the first step, we prove the exact controllability of the linearized
system
\begin{equation}
u_t + {\mathcal H} u_{xx} + GDGu =GD^{\frac{1}{2}} k, \qquad u(0)=u_0,
\label{Y6}
\end{equation}
in $L^2_0(\T )$. In the second step, we prove the exact controllability  of \eqref{Y6} in $H^s_0(\T)$ for all $s>0$ by following the same
approach as in \cite{RZ2009}. Finally, in the third part we derive the exact controllability of the full BO equation by  using the contraction
mapping theorem as e.g. in \cite{rosier97,RZ2007b,RZ2009}. Note that Theorem \ref{main3} follows at once from Theorem \ref{controllability}
by letting
\[
h = -DG u + D^{\frac{1}{2}} k \in L^2(0,T,H_0^{s-\frac{1}{2}} (\T )).
\]
{\em Proof of Theorem \ref{controllability}.}\\

\noindent
{\sc Step1. Exact controllability of \eqref{Y6}  in $H^0_0(\T )$.} \\
First, the solution of \eqref{Y4} belongs to $Z_{s,T}$ for $u_0\in H^s_0(\T )$  and $k\in L^2(0,T,H^s_0(\T ))$, according to
Proposition \ref{prop10}. The adjoint system reads
\begin{equation}
-v_t-{\mathcal H} v_{xx} +GDG v = 0, \qquad v(T)=v_T. \label{Y9}
\end{equation}
Scaling in \eqref{Y6} by $v$ yields
\begin{equation}
\int_{\T} uvdx\big\vert_0^T = \int_0^T\!\!\!\int_{\T} k  D^{\frac{1}{2}} (Gv)dxdt.
\label{Y11}
\end{equation}
The computations are fully justified when $u_0,v_T\in H^2_0(\T )$ and $k\in L^2(0,T,H^{\frac{5}{2}}_0(\T ))$, and next extended
to the case when $u_0,v_T\in H^0_0(\T )$ and $k\in L^2(0,T,H^0_0(\T ))$ by density. Following the classical duality approach, we are led
to prove the following observability inequality
\begin{equation}
\|v_T\|^2 \le C \int_0^T\!\!\! \int_{\T} | D^{\frac{1}{2}} (Gv) |^2 dxdt. \label{Y12}
\end{equation}
Once \eqref{Y12} is proved, the exact controllability of \eqref{Y6} follows by noticing that the operator $\Gamma \in {\mathcal L} (H^0_0(\T ))$
defined by $\Gamma (v_T) = u(T)$, where $u$ denotes the solution of \eqref{Y6} associated with $u_0=0$ and
$k=D^{\frac{1}{2}} (Gv)$ and $v$ denotes the solution of \eqref{Y9}, is onto by \eqref{Y12} and Lax-Milgram theorem.

Let us prove \eqref{Y12} by contradiction. If \eqref{Y12} is not true, then one can pick a sequence $(v^n_T)$ in $H^0_0(\T )$ such that
\begin{equation}
\label{Y13}
1=\|v^n_T\| ^2  > n \int_0^T\!\!\!\int_{\T} |D^{\frac{1}{2}} (Gv^n)|^2 dxdt ,
\end{equation}
where $v^n$ denotes the solution of \eqref{Y9} issued from $v_T=v_T^n$.

Multiplying each term in \eqref{Y9} by $tv^n$ and integrating by parts results in
\begin{equation}
\label{Y14}
\frac{T}{2} \|v_T^n\|^2 = \frac{1}{2} \int_0^T\!\!\! \int_{\T} |v^n|^2 dxdt + \int_0^T \!\!\! \int_{\T } t \, |D^{ \frac{1}{2}} (Gv^n) |^2 dxdt.
\end{equation}

Computations similar to those in the proof of Proposition \ref{prop10} (changing $t$ into $\tau :=T-t$) give
\begin{equation}
\label{Y15}
\|v^n\|_{L^2(0,T,H^{\frac{1}{2}}(\T ))} \le C \|v_T^n\| \cdot
\end{equation}
Thus, by \eqref{Y9} and \eqref{Y15}, $(v^n)$ is bounded in $L^2(0,T,H^{\frac{1}{2}}_0(\T )) \cap H^1(0,T,H^{-\frac{3}{2}} (\T) ) $.
By Aubin-Lions' lemma, a subsequence of $(v^n)$, still denoted by $(v^n)$, has a strong limit (say $v$) in $L^2(0,T,H^0_0(\T ))$.
It follows from \eqref{Y13} and \eqref{Y14} that $(v_T^n)$ is a Cauchy sequence in $H^0_0(\T )$, hence it has a strong limit (say $v_T$)
in $H^0_0(\T )$, with $\|v_T\|=1$.   By standard semigroup theory, $v^n$ converges in $C([0,T],H^0_0(\T ))$ to the solution of
\eqref{Y9} associated with $v_T$, which therefore agrees with $v$. By \eqref{Y13}, $D^{\frac{1}{2}} (Gv) \equiv 0$, hence $Gv\equiv 0$. We
conclude that $v$ satisfies
\begin{eqnarray*}
v_t + {\mathcal H} v_{xx} &=& 0,\\
Gv &=& 0.
\end{eqnarray*}
It follows from Proposition \ref{unique_continuation} that $v\equiv 0$. In particular $v_T=v(T)=0$, a property which contradicts the fact that
$\|v_T\|=1$. The proof of \eqref{Y12} is achieved. \\

\noindent
{\sc Step 2. Exact controllability of \eqref{Y6} in $H^s_0(\T )$.}\\
Picking any number $s>0$, we aim to prove the exact controllability of \eqref{Y6} in $H^s_0(\T )$.
Notice first that the system \eqref{Y9} is (backward) well-posed in $H^{-s}_0(\T)$, since the conclusion of
Lemma \ref{lem100} is still valid when ${\mathcal H}u_{xx}$  is replaced by  $-{\mathcal H}u_{xx}$ in \eqref{R5}.
Thus, the following estimate holds
\[
\| v\| _{L^\infty (0,T,H^{-s}(\T ))} \le C \| v_T\|_{-s}.
\]
On the other hand, setting $w=(1-\partial _x^2)^{-\frac{s}{2}} v$, we see that $w$ solves
\begin{eqnarray*}
&&-w_t-{\mathcal H} w_{xx} + GDG w = (1-\partial _x ^2 )^{-\frac{s}{2}} [(1-\partial _x ^2) ^{\frac{s}{2}} , GDG ]w =: B w\\
&& w(T)= (1-\partial _x ^2) ^{-\frac{s}{2}} v_T =: w_T.
\end{eqnarray*}
Note that $B\in {\mathcal L} (H^\sigma_0(\T ))$ for all $\sigma\in\R$ (see e.g. \cite{laurent}).
Using computations similar to those to prove Corollary \ref{cor10}, we see that
\begin{equation}
\label{AAA}
\|w \|_{L^2(0,T,H^{\frac{1}{2}}(\T ))} \le C \|w_T\|,
\end{equation}
and hence
\begin{equation}
\label{BBB}
\|v \|_{L^2(0,T,H^{-s+\frac{1}{2}}(\T ))} \le C \|v_T\|_{-s}.
\end{equation}

Assuming again that
$u_0=0$, we first note that \eqref{Y11} may be written
\[
\langle  v_T,u(T) \rangle _{-s,s} = \int_0^T \langle D^{\frac{1}{2}}  (Gv) ,  k  \rangle _{-s,s} dt,
\]
where $\langle \cdot ,\cdot  \rangle _{-s,s} $ denotes the duality pairing
$\langle \cdot ,\cdot  \rangle _{H^{-s}_0 (\T ), H^s_0(\T ) }$. We aim to prove the observability inequality
\begin{equation}
\label{Y20}
\|v_T\|^2_{-s} \le C \int_0^T \|Gv \|^2 _{-s+ \frac{1}{2}} dt.
\end{equation}
Once \eqref{Y20} is proved, the exact controllability of \eqref{Y6} in $H^s_0(\T )$ follows easily. Indeed, if
$\Gamma _{-s} \in {\mathcal L} (H^{-s}_0(\T ))$ is defined by $\Gamma _{-s}  (v_T) = (1-\partial _x^2 ) ^s u (T) $ where
$u$ solves \eqref{Y6} with $k=(1-\partial _x^2)^{-s} D^{\frac{1}{2}} (Gv)$ and $v$ still denotes the solution of \eqref{Y9}, then
\[
(v_T, \Gamma _{-s} (v_T) )_{-s} = \int_0^T \|D^{\frac{1}{2}} (Gv) \|^2 _{-s} dt \ge C\int_0^T  \|Gv\|^2_{-s+\frac{1}{2}} dt \ge C \|v_T\|^2 _{-s},
\]
so that $\Gamma _{-s}: H^{-s}_0(\T )\to H^{-s}_0(\T ) $ is onto. The same is true for the map $v_T\in H^{-s}_0(\T ) \to u(T) \in H^s_0(\T )$.
To prove \eqref{Y20}, we establish first a weaker estimate
\begin{equation}
\label{Y21}
\|v_T\|^2_{-s} \le C \left( \int_0^T \|Gv\|^2_{-s+\frac{1}{2}} dt + \|v_T\|^2_{-s-\frac{1}{2} } \right) .
\end{equation}
We argue again by contradiction. If \eqref{Y21} is false, then there is a sequence $(v_T^n)$ in $H^{-s}_0(\T )$ such that
\begin{equation}
\label{Y22}
1=\|v_T^n\| ^2_{-s}  > n
\left(
\int_0^T \|Gv^n\|^2_{-s+\frac{1}{2}} dt + \|v_T^n\| ^2 _{-s -\frac{1}{2}}
\right) .
\end{equation}
It follows that
\begin{eqnarray}
v_T^n &&\to 0 \quad  \text{ in } H_0^{-s-\frac{1}{2}} (\T ), \label{Y23} \\
v^n &&\to 0 \quad \text{ in } C([0,T],H^{-s-\frac{1}{2}}_0(\T )). \label{Y24}
\end{eqnarray}
Let $w^n=(1-\partial _x^2)^{-\frac{s}{2}} v^n$. Then $w^n$ solves
\begin{eqnarray*}
&&-w_t^n-{\mathcal H} w^n_{xx} + GDG w^n = (1-\partial _x ^2 )^{-\frac{s}{2}} [(1-\partial _x ^2) ^{\frac{s}{2}} , GDG ]w^n = B w^n,\\
&& w^n(T)= (1-\partial _x ^2) ^{-\frac{s}{2}} v^n_T =: w_T^n.
\end{eqnarray*}
Then $\|w_T^n\|=1$, $w_T^n \to 0$
in $H_0^{-\frac{1}{2}} (\T )$, and
\begin{eqnarray}
&& w^n\to 0\quad  \text{ in } C([0,T],H_0^{-\frac{1}{2}} (\T )) \label{Y25a}\\
&& \int_0^T \|G w^n\|^2_{\frac{1}{2}} dt \to 0. \label{Y25b}
\end{eqnarray}
For \eqref{Y25b}, we notice that
\begin{eqnarray}
\int_0^T \|G w^n\|^2_{\frac{1}{2}} dt
&=& \int_0^T \|G (1-\partial _x ^2)^{-\frac{s}{2}} v^n \|^2_{\frac{1}{2}} dt \nonumber \\
&\le& \int_0^T \|G v^n\|^2_{-s+\frac{1}{2}} dt + \int_0^T \|[G,(1-\partial _x^2)^{-\frac{s}{2}} ] v^n \|^2_{\frac{1}{2}} dt . \label{Y25c}
\end{eqnarray}
The first term in the right hand side of \eqref{Y25c} tends to $0$ by \eqref{Y22}. For the second one, we have that
\[
\int_0^T \|[G, (1-\partial _x ^2 ) ^{-\frac{s}{2}} ] v^n \|^2 _{\frac{1}{2}} dt \le C \int_0^T \|v^n\|^2_{-s-\frac{1}{2}} dt \le
C\|v^n\|^2_{L^\infty (0,T,H^{-s-\frac{1}{2}} (\T )  ) } \to 0,
\]
by \eqref{Y24}.

From \eqref{AAA}, we infer that
\begin{equation}
\label{Y26}
\|w^n\|_{L^2(0,T,H^{\frac{1}{2}}(\T ))} \le C \|w^n_T\|.
\end{equation}

Arguing as in Step 1 and using \eqref{Y26}, we can derive the following observability inequality
\[
\|w_T^n\|^2 \le C\left(
\int_0^T\!\!\! \int_{\T} |D^{\frac{1}{2}}(Gw^n)|^2 dxdt + \|Bw^n\|^2_{L^2(0,T,H^{-\frac{1}{2}}(\T ))}
\right) \cdot
\]
Combined with \eqref{Y25a} and \eqref{Y25b}, this yields $w_T^n\to 0$ in $H^0_0(\T )$, contradicting the fact that
$\|w_T^n\|=1$ for all $n$. The proof of \eqref{Y21} is complete. Finally, we prove \eqref{Y20} by contradiction.
If \eqref{Y20} is false, then there is a sequence $(v_T^n)$ in $H^{-s}_0(\T )$ such that
\begin{equation}
1=\|v_T^n\|^2_{-s} >n \int_0^T \|Gv^n\|^2 _{-s+\frac{1}{2}} dt.
\label{Y27}
\end{equation}
 Extracting a subsequence still denoted by $(v_T^n)$, we can assume that $(v_T^n)$ is strongly convergent
 in $H_0^{-s-\frac{1}{2} } (\T )$ by compactness of the embedding $H_0^{-s} (\T ) \subset H_0^{-s-\frac{1}{2} } (\T )$.
 Using  \eqref{Y27}, we infer from \eqref{Y21} that $(v_T^n)$ is also strongly convergent
 in $H_0^{-s} (\T )$. Its limit $v_T$ satisfies $\|v_T\|_{-s} = 1 $, and the solution $v$ of \eqref{Y9} satisfies $Gv=0$ by \eqref{Y27}.
 Thus for a.e. $t\in (0,T)$
 \[
 v_{xxx}(\cdot ,t ) = {\mathcal H } v_{xxx} (\cdot , t ) = 0 \qquad \text{ on } \omega .
 \]
 We conclude with Lemma \ref{Fourier} that $v\equiv 0$, hence $v_T=0$, which contradicts $\|v_T\|_{-s}=1$.
 The proof of \eqref{Y20} is achieved. \\

 \noindent
 {\sc Step 3. Fixed-point argument in $H^s_0(\T )$.}\\
We proceed as in \cite{rosier97}. Pick any $s\in (\frac{1}{2}, 2]$ and any $T>0$.
We still denote by $(S(t))_{t\ge 0}$ the semigroup introduced in Lemma \ref{lem100}
and by $Z_{s,T}$  the space introduced in \eqref{R2}. For $v\in Z_{s,T}$, we set
\[
\omega (v) = \int_0^T S(T-t) (vv_x)(t)\, dt.
\]
From Step 2 we know that the linearized system, namely
\eqref{Y6}, with initial data $u_0\in H^s_0(\T ) $ and control function  $k\in L^2(0,T , H^s_0(\T ) )$ is well-posed and exactly controllable
in $H^s_0(\T ) $.  By a classical functional analysis argument (see e.g. \cite[Lemma 2.48 p. 58]{coron-book}), one can construct a continuous operator
$\Lambda : H^s_0(\T )  \to L^2(0,T, H^s_0(\T ))$ such that for any $u_1\in H^s_0(\T ) $ the solution $u$ of \eqref{Y6} associated with
$u_0=0$ and $k=\Lambda (u_1)$  satisfies $u(T) = u_1$. Let us denote by $u=W(k)$ the corresponding trajectory. We know from Proposition
\ref{prop10}   that $W$ is continuous from $L^2(0,T,H^s_0(\T ))$ into $Z_{s,T}$. Let $u_0,u_1\in H^s_0(\T )$ be given with
\[
\|u_0\|_{H^s_0(\T )  } < \delta, \qquad  \|u_1\|_{H^s_0(\T ) } <\delta ,
\]
where $\delta >0$ will be chosen later. Let $v\in Z_{s,T}$. If we choose  $k=\Lambda (u_1-S(T)u_0 +\omega  (v))$, then
\[
S(t)u_0 - \int_0^t S(t-\tau ) (vv_x)(\tau ) d\tau + W ( k ) (t)
=
\left\{
\begin{array}{ll}
u_0 \quad &\text{\rm if } t=0;\\
u_1 &\text{\rm if } t=T.
\end{array}
\right.
\]
It suggests to consider the nonlinear map $v\to \Gamma (v)$, where
\[
\Gamma (v)(t) = S(t)u_0 - \int_0^t S(t-\tau ) (vv_x)(\tau ) d\tau + W (\Lambda ( u_1-S(T)u_0 +\omega (v) ) ) (t).
\]
The proof will be complete if we can show that this map has a fixed point in the space $Z_{s,T}$.
Using the estimates in the proof of Theorem \ref{thmA}, we see that
\[
\|\omega (v)\|_s \le C \| \int_0^t S(t-\tau ) (vv_x)( \tau ) d\tau \|_{Z_{s,T}} \le C \|v\|^2_{Z_{s,T}}
\]
and that there are some constants $C_0>0$ and $C_1>0$ such that
\begin{eqnarray*}
\| \Gamma (v)\|_{Z_{s,T}}                                     &\le& C_0 (\|u_0\|_s + \|u_1\|_s) + C_1 \| v \|^2_{Z_{s,T}} \qquad \forall v\in Z_{s,T},\\
\| \Gamma (v^1) -\Gamma (v^2) \|_{Z_{s,T}} &\le& C_1\big( \| v^1 \|_{Z_{s,T}} + \| v^2 \|_{Z_{s,T}}
\big) \| v^1 -v^2\|_{Z_{s,T}}  \qquad \forall v^1,v^2\in Z_{s,T}.
\end{eqnarray*}
Let $B=\{ v\in Z_{s,T}; \ \|v\|_{Z_{s,T}}  \le R \}$. We choose the radius $R$ in such a way that the ball $B$ is left
invariant by $\Gamma$ and $\Gamma $ contracts in $B$, i.e.
\[
C_0 (\|u_0\|_s + \|u_1\|_s) + C_1 R^2 \le R, \ \text{ and } 2C_1R<1.
\]
It is sufficient to take $R=(4C_1)^{-1}$ and
$\delta := R/(4C_0)$.
The proof of Theorem \ref{controllability} is complete.\qed

\section*{Acknowledgements}
The authors wish to thank Institut Henri Poincar\'e (Paris, France) for providing a very
stimulating environment during the ``Control of Partial Differential Equations and Applications''
program in the Fall 2010. LR was  partially supported by the Agence Nationale de la Recherche, Project CISIFS,
grant ANR-09-BLAN-0213-02. FL was partially supported by CNPq and FAPERJ/Brazil.

\end{document}